\documentclass[12pt]{amsart}
\usepackage[letterpaper,margin=1.1in]{geometry}
\usepackage{graphicx}
\usepackage{amssymb}
\usepackage[mathscr]{eucal}
\usepackage{amsmath}
\usepackage{amsthm}
\usepackage{bigints}
\usepackage{enumerate}

\newtheorem{theorem}{Theorem}[section]
\newtheorem{lemma}[theorem]{Lemma}
\newtheorem{corollary}[theorem]{Corollary}
\newtheorem{proposition}[theorem]{Proposition}

\theoremstyle{definition}
\newtheorem{definition}[theorem]{Definition}

\theoremstyle{remark}
\newtheorem{remark}[theorem]{Remark}

\newtheorem*{ack}{Acknowledgements}

\newcommand{\be}{\begin{equation}}
\newcommand{\ee}{\end{equation}}
\newcommand{\ben}{\begin{equation*}}
\newcommand{\een}{\end{equation*}}
\newcommand{\bes}{\begin{eqnarray}}
\newcommand{\ees}{\end{eqnarray}}
\newcommand{\besn}{\begin{eqnarray*}}
\newcommand{\eesn}{\end{eqnarray*}}
\newcommand{\txt}{\textrm}
\newcommand{\mres}{\mathbin{\vrule height 1.6ex depth 0pt width
0.13ex\vrule height 0.13ex depth 0pt width 0.7ex}}

\def\Xint#1{\mathchoice
{\XXint\displaystyle\textstyle{#1}}%
{\XXint\textstyle\scriptstyle{#1}}%
{\XXint\scriptstyle\scriptscriptstyle{#1}}%
{\XXint\scriptscriptstyle\scriptscriptstyle{#1}}%
\!\int}
\def\XXint#1#2#3{{\setbox0=\hbox{$#1{#2#3}{\int}$ }
\vcenter{\hbox{$#2#3$ }}\kern-.57\wd0}}

\def\dashint{\Xint-}

\numberwithin{equation}{section}
\numberwithin{figure}{section}

\begin{document}
\title[Poincar\'e-type inequalities and finding good parameterizations]{Poincar\'e-type inequalities and finding good parameterizations}

\author{Jessica Merhej}
\thanks{The author was partially supported by NSF DMS-0856687 and DMS-1361823 grants.}
\keywords{Rectifiable set, Carleson-type condition, Poincar\'{e}-type condition, $p$-Poincar\'e inequality, $Lip$-Poincar\'e inequality, Ahlfors regular, bi-Lipschitz image}
\address{Department of Mathematics\\ University of Washington\\ Box 354350\\ Seattle, WA 98195}
\email{jem05@uw.edu, j.e.merhej@gmail.com}
\date{May 24th, 2016}

\begin{abstract}
A very important question in geometric measure theory is how geometric features of a set translate into analytic information about it. In 1960, E. R. Reifenberg proved that if a set is well approximated by planes at every point and at every scale, then the set is a bi-H\"older image of a plane. It is known today that Carleson-type conditions on these approximating planes guarantee a bi-Lipschitz parameterization of the set. In this paper, we consider an $n$-Ahlfors regular rectifiable set $M \subset \mathbb{R}^{n+d}$ that satisfies a Poincar\'{e}-type inequality involving the tangential derivative. Then, we show that a Carleson-type condition on the oscillations of the tangent planes of $M$ guarantees that $M$ is contained in a bi-Lipschitz image of an $n$-plane. We also explore the Poincar\'e-type inequality considered here and show that it is in fact equivalent to other Poincar\'e-type inequalities considered on general metric measure spaces. \end{abstract}

\maketitle

\setcounter{tocdepth}{1}	
\tableofcontents

\section{Introduction} \label{Intro}

Finding bi-Lipschitz parameterizations of sets is a central question in areas of geometric measure theory and geometric analysis. A Lipschitz function on a metric space plays the role played by a smooth function on a manifold, and a bi-Lipschitz function plays the role of that of a diffeomorphism. Many concepts in metric spaces, such as metric dimensions and Poincar\'e inequalities, are preserved under bi-Lipschitz mappings. Moreover, a bi-Lipschitz parameterization of a set by Euclidean space leads to its uniform rectifiability. Uniform rectifiability is a quantified version of rectifiability which is well adapted to the study of problems in harmonic analysis on non-smooth sets.\\

The type of parameterizations discussed in this paper first appeared in 1960 when Reifenberg \cite{Re} showed that if a closed set $M \subset \mathbb{R}^{n+d}$ is well approximated by affine $n$-planes at every point and every scale, then $M$ is a bi-H\"older image of $\mathbb{R}^{n}$. Such a set is called a \emph{Reifenberg flat} set. In recent years, there has been renewed interest in this result and its proof. In particular, Reifenberg type parameterizations have been used to get good parameterizations of many spaces such as chord arc surfaces with small constant (see \cite{Se1, Se2}), and limits of manifolds with Ricci curvature bounded from below (see \cite{CC, CN}). Moreover, Reifenberg's theorem has been refined to get better parameterizations of a set: bi-Lipschitz parameterizations (see \cite{DS2}, \cite{To1}, \cite{DT1}, \cite{M1}). In fact, it is well known today, due to the authors of the latter references, that Carleson-type conditions are the correct conditions to study when seeking necessary and sufficient conditions for bi-Lipschitz parameterizations of sets. For example, in \cite{To1}, Toro considers a Carleson condition on the Reifenberg flatness of $M$ that guarantees its bi-Lipschitz parameterization. In \cite{DT1}, David and Toro consider a Carleson condition on the Jones beta numbers $\beta_{\infty}$ and on the (possibly smaller) $\beta_{1}$ numbers that guarantees the same result. In \cite{M1}, the author studies a Carleson-type condition on the oscillation of the unit normals to an $n$-rectifiable set $M$ of co-dimension 1, that guarantees its bi-Lipschitz parameterization. An $n$-rectifiable set $M \subset \mathbb{R}^{n+d}$ is a generalization of a smooth $n$-manifold in $\mathbb{R}^{n+d}$. Rectifiable sets are characterized by having (approximate) tangent planes (see Definition \ref{ats}) at $\mathcal{H}^{n}$-almost every point. Moreover, in the special case when the rectifiable set $M$ has co-dimension 1, then $M$ has an (approximate) unit normal $\nu$ (see Remark \ref{remunitnor}) at $\mathcal{H}^{n}$-almost every point. In fact, in \cite{M1}, the author considers an $n$-Ahlfors regular rectifiable set $M \subset \mathbb{R}^{n+1}$, of co-dimension 1, that satisfies the following Poincar\'e-type inequality for $d=1$ and $\lambda =2$:\\

For all $x \in M$, $r > 0$, and $f$ a Lipschitz function on $\mathbb{R}^{n+d}$, we have 
\be \label{eqp} \dashint_{B_{r}(x)} \left| f(y) - f_{x,r} \right| \, d \mu(y) \leq C_{P} \, r \, \left(\,\dashint_{B_{\lambda r}(x)}|\nabla^{M}f(y)|^{2} \, d \mu(y) \right)^{\frac{1}{2}}, \ee
where $C_{P}$ denotes the Poincar\'{e} constant that appears here, $\lambda \geq 1$ is the dilation constant, $\mu =$  \( \mathcal{H}^{n} \mres M\) is the Hausdorff measure restricted to $M$, $f_{x,r} = \dashint_{B_{r}(x)}f \, d \mu $ is the average of the function $f$ on $B_{r}(x)$, $B_{r}(x)$ is the Euclidean ball in the ambient space $\mathbb{R}^{n+d}$, and $\nabla^{M}f(y)$ denotes the tangential derivative of $f$ (see Definition \ref{deftander}).\\

Then, the author shows that a Carleson-type condition on the oscillation of the unit normal $\nu$ to $M$ guarantee a bi-Lipschitz parameterization of $M$.

\begin{theorem} \label{Th1Merhej} (see \cite{M1}, Theorem 1.5)
Let $M \subset B_{2}(0) \subset \mathbb{R}^{n+1}$ be an $n$-Ahlfors regular rectifiable set containing the origin, and let $\mu =$  \( \mathcal{H}^{n} \mres M\) be the Hausdorff measure restricted to $M$. Assume that $M$ satisfies the Poincar\'{e}-type inequality (\ref{eqp}) with $d=1$ and $\lambda = 2$. There exists $\epsilon_{0} = \epsilon_{0}(n, C_{M}, C_{P})>0$, such that if for some choice of unit normal $\nu$ to $M$, we have

\be \label{103old} \int_{0}^{1} \left(\,\dashint_{B_{r}(x)} |\nu(y) - \nu_{x,r}|^{2} \, d \mu \right) \frac{dr}{r} < \epsilon_{0}^{2}, \quad \txt{for} \,\, x \in M \cap B_{\frac{1}{10^{4}}}(0), \ee 
then  $M \cap B_{\frac{1}{10^{4}}}(0)$ is contained in the image of an affine $n$-plane by a bi-Lipschitz mapping, with bi-Lipschitz constant depending only on $n$, $C_{M}$ and $C_{P}$.
\end{theorem}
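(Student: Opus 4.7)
The plan is to verify that $M$ satisfies the two hypotheses of David and Toro's bi-Lipschitz parameterization theorem \cite{DT1}: Reifenberg flatness with a sufficiently small constant, and a Carleson condition on the Jones $\beta_1$-numbers. Both conditions will be extracted from the Carleson condition (\ref{103old}) on $\nu$ by applying the Poincar\'e inequality (\ref{eqp}) to appropriately chosen affine height functions.

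For the key estimate, fix $x \in M \cap B_{1/10^4}(0)$ and $r \in (0,1)$. Replacing $\nu_{x,r}$ by its normalization if necessary (permissible since the Bessel-type identity $|\nu_{x,r}|^2 = 1 - \dashint_{B_r(x)} |\nu - \nu_{x,r}|^2 \, d\mu$ makes the correction controlled by the same quantity appearing in (\ref{103old})), I would apply (\ref{eqp}) to the linear Lipschitz function $f(y) = (y - x) \cdot \nu_{x,r}$. At $\mu$-a.e.\ $y \in M$ the tangential gradient equals the orthogonal projection of $\nu_{x,r}$ onto $T_y M$, so
\ben |\nabla^M f(y)|^2 = 1 - (\nu_{x,r} \cdot \nu(y))^2 \leq |\nu(y) - \nu_{x,r}|^2. \een
Setting $P_{x,r}$ to be the affine hyperplane $\{y : (y-x) \cdot \nu_{x,r} = f_{x,r}\}$, so that $|f(y) - f_{x,r}| = \mathrm{dist}(y, P_{x,r})$, inequality (\ref{eqp}) becomes
\ben \dashint_{B_r(x)} \mathrm{dist}(y, P_{x,r}) \, d\mu(y) \leq C_P \, r \left( \dashint_{B_{2r}(x)} |\nu(y) - \nu_{x,r}|^2 \, d\mu \right)^{1/2}. \een
Thus the normalized $L^1$-$\beta_1$-number of $M$ at $(x,r)$ is dominated by the square root of the integrand appearing in (\ref{103old}); squaring and integrating $dr/r$ then transfers the Carleson condition on $\nu$ into a Carleson condition on $\beta_1^2$, with constant of order $\epsilon_0^2$.

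For Reifenberg flatness, I would upgrade the $L^1$-control above to $L^\infty$ scale by scale via a Chebyshev-type argument combined with Ahlfors regularity: a point of $M$ at distance much greater than $r\sqrt{\eta}$ from $P_{x,r}$ would carry, by Ahlfors regularity of $\mu$, enough $\mu$-mass on a small ball around it to violate the $L^1$ bound, where $\eta$ is the right-hand integrand. Since (\ref{103old}) forces smallness of that integrand on most dyadic scales, a pigeonhole argument combined with the doubling property of $\mu$ propagates $L^\infty$-flatness from the good scales to every scale, yielding Reifenberg flatness with a constant that tends to $0$ with $\epsilon_0$.

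The main obstacle is precisely this passage from averaged smallness of the normal oscillation to uniform-in-$(x,r)$ flatness: the exceptional "bad" scales must be controlled by the smallness of $\epsilon_0$, and the transition between good and bad scales must be handled via the doubling property so that the flatness threshold required by \cite{DT1} is never exceeded. Once both the Reifenberg flatness with small constant and the $\beta_1^2$-Carleson condition are verified, the theorem of David and Toro directly produces a bi-Lipschitz image of an $n$-plane containing $M \cap B_{1/10^4}(0)$, with bi-Lipschitz constant depending only on $n$, $C_M$, and $C_P$, as desired.
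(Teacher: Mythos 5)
Your derivation of the $\beta_1$-type estimate is essentially sound and close in spirit to the paper's Theorem \ref{t1}: in codimension one, applying the Poincar\'e inequality to the affine height function $f(y)=(y-x)\cdot\nu_{x,r}$ gives exactly the bound $\dashint_{B_r(x)}\mathrm{dist}(y,P_{x,r})\,d\mu\leq C_P r\,\alpha(x,2r)$ for the natural choice of plane $P_{x,r}$, which the paper also obtains (via an eigenvector analysis of the averaged projection matrix $A_{x,\lambda r}$ that reduces to your $\nu_{x,r}$ calculation when $d=1$). Your normalization remark for $\nu_{x,r}$ is correct.

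The serious gap is the Reifenberg flatness step. You propose to feed the David--Toro theorem the pair (Reifenberg flatness of $M$ with small constant, $\beta_1^2$-Carleson condition). But the hypotheses of the statement do \emph{not} imply Reifenberg flatness of $M$: the paper's Theorem \ref{construct} explicitly constructs a non-Reifenberg-flat set, with a hole, satisfying Ahlfors regularity, the Poincar\'e inequality, and the Carleson condition with left-hand side identically zero (the unit normal is constant). Your Chebyshev-plus-Ahlfors-regularity argument controls only $\sup_{y\in M\cap B_r(x)}d(y,P_{x,r})$; it gives no control on $\sup_{y\in P_{x,r}\cap B_r(x)}d(y,M)$, which is the other half of the Reifenberg condition and is precisely what fails for a set with holes. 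This one-sided control is the inequality (\ref{rf4}) that the paper establishes in the proof of Corollary \ref{MTT'RF}, but there the other direction is taken as an \emph{additional hypothesis}, not derived.

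Consequently the version of David--Toro you invoke (Reifenberg flat plus $\beta_1$-Carleson implies bi-Lipschitz parameterization of $M$ itself) is not applicable here, and would in any case prove a stronger conclusion than the statement asserts, which claims only that $M$ is \emph{contained in} a bi-Lipschitz image of a plane. The paper's actual route is different: it constructs a coherent collection of balls and planes (CCBP) directly from the Poincar\'e estimates (nets $\{x_{jk}\}$ in $M$, planes $P_{jk}$ from Theorem \ref{t1}, compatibility conditions (\ref{74})--(\ref{75}) proved via Lemmas \ref{l1} and \ref{l3}), then applies Theorem 2.4 and Proposition 11.2 of \cite{DT1}. That machinery never requires $M$ to be Reifenberg flat; it produces a Reifenberg flat set $g(\Sigma_0)$ containing the accumulation set $E_\infty$, and one checks separately that $M\cap B_{\theta_0}(0)\subset E_\infty$. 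Your proposal would need to be restructured around the CCBP construction rather than around flatness of $M$.
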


In this paper, we generalize Theorem \ref{Th1Merhej} to higher co-dimensions $d$ and arbitrary dilation constants $\lambda \geq 1$. Before stating the theorem, let us introduce some notation. Suppose that $M \subset \mathbb{R}^{n+d}$ is an $n$-Ahlfors regular rectifiable set that satisfies the Poincar\'{e}-type inequality (\ref{eqp}). Fix $x \in M$ and $r >0$. Let $y \in M \cap B_{r}(x)$ such that the approximate tangent plane $T_{y}M$ of $M$ at the point $y$ exists, and denote by $\pi_{T_{y}M}$ the orthogonal projection of $\mathbb{R}^{n+d}$ on $T_{y}M$. Using the standard basis of $\mathbb{R}^{n+d}$, $\{ e_{1} , \ldots , e_{n+d} \}$,  we can view $\pi_{T_{y}M}$ as an $(n+d) \times (n+d)$ matrix whose $j^{th}$ column is the vector is $\pi_{T_{y}M}(e_{j})$. Thus, we denote $\pi_{T_{y}M}$ by the matrix $\big (a_{ij}(y)\big )_{ij}$. Finally, let $A_{x,r} = \big((a_{ij})_{x,r}\big)_{ij}$, be the matrix whose ${ij}^{th}$ entry is the average of the function $a_{ij}$ in the ball $B_{r}(x)$.\\

\begin{theorem} \label{MTT'}
Let $M \subset B_{2}(0) \subset \mathbb{R}^{n+d}$ be an $n$-Ahlfors regular rectifiable set containing the origin, and let $\mu =$  \( \mathcal{H}^{n} \mres M\) be the Hausdorff measure restricted to $M$. Assume that $M$ satisfies the Poincar\'{e}-type inequality (\ref{eqp}). There exist $\epsilon_{0}= \epsilon_{0}(n,d, C_{M},C_{P}) >0$ and $\theta_{0} = \theta_{0}(\lambda) < 1$, such that if

\be \label{103} \int_{0}^{1} \left(\,\dashint_{B_{r}(x)} |\pi_{T_{y}M} - A_{x,r}|^{2} \, d \mu \right) \frac{dr}{r} < \epsilon_{0}^{2} \quad \txt{for} \,\, x \in M \cap B_{1}(0), \ee 
where $|\pi_{T_{y}M} - A_{x,r}|$ denotes the Frobenius norm \footnote{ \hspace{0.1cm} $|\pi_{T_{y}M} - A_{x,r}|^{2} = \txt{trace} \big( (\pi_{T_{y}M} - A_{x,r})^{2} \big) = \displaystyle \sum_{i,j=1}^{n+d} |a_{ij}(y) - (a_{ij})_{x,r}|^{2}$}  of $\pi_{T_{y}M} - A_{x,r}$, then there exists an onto $K$-bi-Lipschitz map $g: \mathbb{R}^{n+d} \rightarrow \mathbb{R}^{n+d}$ where the bi-Lipschitz constant $K = K(n,d,  C_{M},C_{P})$ and an $n$-dimensional plane $\Sigma_{0}$, with the following properties:
\be \label{aa} g(z)= z \quad \txt{when} \,\,\, d(z, \Sigma_{0}) \geq 2, \ee
and
\be  \label{bb} |g(z)-z| \leq C_{0} \epsilon_{0} \quad \txt{for} \,\,\, z \in \mathbb{R}^{n+d}, \ee
where $C_{0}= C_{0}(n,d,C_{M},C_{P})$. Moreover, 
\be \label{cc} g(\Sigma_{0})\,\, \txt{is a} \,\,\, C_{0} \epsilon_{0} \txt{-Reifenberg flat set}, \ee and 
\be \label{contained} M \cap B_{\theta_{0}}(0) \subset g(\Sigma_{0}). \ee
\end{theorem}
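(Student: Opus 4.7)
The plan is to follow the outline of the proof of Theorem \ref{Th1Merhej} in \cite{M1}, replacing the unit normal $\nu$ by the projection matrix $\pi_{T_y M}$ throughout. The key is to show that the Carleson-type hypothesis (\ref{103}) forces the Jones $\beta$-numbers of $M$ to satisfy their own Carleson-type condition, after which a bi-Lipschitz Reifenberg-type theorem in the spirit of David--Toro \cite{DT1} (or its refinement in \cite{M1}) produces the map $g$ and plane $\Sigma_0$ with the stated properties. All the new work lies in extracting a canonical candidate $n$-plane from the matrix $A_{x,r}$ and converting the Poincar\'e control into $L^\infty$ flatness.

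First, for each $x \in M \cap B_1(0)$ and $r > 0$, set $\varepsilon(x,r)^2 := \dashint_{B_r(x)} |\pi_{T_y M} - A_{x,r}|^2 \, d\mu$. Each $\pi_{T_y M}$ is symmetric, idempotent, and has trace $n$, so averaging and applying Cauchy--Schwarz yields that $A_{x,r}$ is symmetric with trace $n$ and satisfies $A_{x,r}^2 = A_{x,r} + O(\varepsilon(x,r))$. Standard perturbation theory then gives that when $\varepsilon(x,r)$ is small the spectrum of $A_{x,r}$ splits into $n$ eigenvalues near $1$ and $d$ eigenvalues near $0$, and $A_{x,r}$ is close in Frobenius norm to the orthogonal projection $\pi_{P_{x,r}}$ onto the span $P_{x,r}$ of its top $n$ eigenvectors. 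This $P_{x,r}$ is the candidate approximating $n$-plane at scale $r$ about $x$.

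Next, for a unit vector $v \in P_{x,r}^\perp$, apply the Poincar\'e inequality (\ref{eqp}) to the affine function $f_v(y) = \langle y - x, v\rangle$, whose tangential gradient is $\pi_{T_y M}(v)$. Decomposing
\[
\pi_{T_y M}(v) = (\pi_{T_y M} - A_{x, \lambda r})(v) + (A_{x, \lambda r} - \pi_{P_{x, \lambda r}})(v) + (\pi_{P_{x, \lambda r}} - \pi_{P_{x,r}})(v)
\]
and using the previous step shows that the right-hand side of (\ref{eqp}) is bounded by $r$ times a quantity $\widetilde{\varepsilon}(x,r)$ that still satisfies a Carleson-type condition, after a covering argument that absorbs the dilation factor $\lambda$ at the cost of shrinking the base ball by a factor depending on $\lambda$ (this is where $\theta_0(\lambda)$ enters). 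The resulting $L^1$ bound on $\mathrm{dist}(\,\cdot\,, P_{x,r} + c_{x,r})$, where $c_{x,r}$ is the barycenter of $M$ in $B_r(x)$, is then upgraded to an $L^\infty$ bound by a Campanato-type iteration across scales, using Ahlfors regularity together with a $\delta$-net of directions in $P_{x,r}^\perp$. One concludes that $\beta_{\infty, M}(x, r/2) \lesssim \widetilde{\varepsilon}(x,r)$, so the Jones numbers of $M$ inherit the Carleson condition from (\ref{103}).

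Finally, a direct comparison of averages yields $|\pi_{P_{x,r}} - \pi_{P_{x,2r}}| \lesssim \varepsilon(x, 2r)$ and a similar estimate for nearby basepoints; combined with the previous step, the affine planes $\{P_{x,r} + c_{x,r}\}$ form a coherent family with Carleson-small deviations, which are precisely the hypotheses of the David--Toro construction. Applying it with $\epsilon_0$ small enough in terms of $n, d, C_M, C_P$ produces the map $g$ and the initial plane $\Sigma_0 = P_{0,1} + c_{0,1}$ satisfying (\ref{aa})--(\ref{contained}). The main obstacle is the upgrade from $L^2$ Poincar\'e control to pointwise $L^\infty$ flatness, done uniformly across all scales so that the telescoping Carleson sum survives; the absence of a single unit normal in higher codimension forces one to handle all directions $v$ in $P_{x,r}^\perp$ simultaneously, and tracking the dependence on $\lambda$ is what produces the cutoff $\theta_0 < 1$ in place of the universal constant $10^{-4}$ of Theorem \ref{Th1Merhej}.
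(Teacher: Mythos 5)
Your plan has the right skeleton and several of the correct ingredients: the spectral splitting of $A_{x,r}$ into $n$ eigenvalues near $1$ and $d$ near $0$ (the paper does this via Gershgorin after picking a point $y_0$ achieving the average bound; your route via approximate idempotency is equally workable), the application of the Poincar\'e inequality to the linear functions $y \mapsto \langle y, v\rangle$ for $v$ in the ``normal'' directions, and the invocation of the David--Toro Reifenberg parameterization machinery at the end. But the pivotal step of your argument does not hold.

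You claim that a Campanato-type iteration upgrades the $L^1$ estimate $\dashint_{B_r(x)} r^{-1}\, d(y, P_{x,r})\, d\mu \lesssim \tilde\varepsilon(x,r)$ to a single-scale pointwise bound $\beta_{\infty, M}(x, r/2) \lesssim \tilde\varepsilon(x,r)$, and that this is what carries the Carleson condition to the Jones numbers. This estimate is false: the passage from $L^1$ averages to $L^\infty$ via Ahlfors regularity and Chebyshev costs a power (roughly $\tilde\varepsilon^{1/(n+1)}$ or $\tilde\varepsilon^{1/2n}$, exactly as in Corollary \ref{MTT'RF} where this Chebyshev argument is carried out and produces $\epsilon_3^{1/2n}$ rather than $\epsilon_3$), and a telescoping across scales produces a weighted sum $\sum_j 10^{-(j-k)} \alpha(z,\lambda r_j)$ rather than a single-scale term, which after squaring and summing is not obviously a Carleson quantity uniformly over the varying worst-case points $z$. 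The power loss is fatal: it destroys the square-summability needed in Corollary \ref{cr1}. The paper's proof avoids this trap entirely: it never upgrades to $\beta_\infty$ control of $M$. Instead, it verifies the CCBP compatibility conditions (\ref{74}), (\ref{75}) between the approximating planes by evaluating distances at barycenters $p(y_l) = \dashint_{B_r(y_l)} z\, d\mu(z)$, where Jensen's inequality applied to the convex function $d(\cdot, P_{jk})$ converts the $L^1$ control directly into pointwise bounds at those barycenters with no power loss; Lemmas \ref{l1} and \ref{l3} then ensure the barycenters effectively span the planes and propagate linear-in-$\alpha$ estimates. The David--Toro theorem, in the CCBP form used here, only needs plane-to-plane Hausdorff distances controlled, never the $\beta_\infty$ of $M$. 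So your route through $\beta_\infty$ is both incorrect as stated and structurally unnecessary.

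Two secondary remarks. First, the paper defines the approximating plane at scale $r$ to be spanned by eigenvectors of $A_{x,\lambda r}$ (the matrix at the \emph{dilated} scale), which removes the cross-scale term $(\pi_{P_{x,\lambda r}} - \pi_{P_{x,r}})(v)$ from your decomposition; your version would need an extra estimate for that term, which is doable but adds work. Second, the $\theta_0(\lambda)$ cutoff comes from choosing $r_k = 10^{-k - l_0 - 5}$ with $10^{l_0} \le \lambda \le 10^{l_0+1}$ so that all the dilated balls $B_{\lambda r_k}$ remain well inside the region where the Carleson hypothesis is available, not from the $L^\infty$ upgrade; this point in your sketch is in the right spirit but attached to the wrong step.
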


Notice that the conclusion of Theorem \ref{MTT'} states that $M$ is (locally) \emph{contained in} a bi-Lipschitz image of an $n$-plane instead of $M$ being exactly a (local) bi-Lipschitz image of an $n$-plane. This is very much expected, since we do not assume that $M$ is Reifenberg flat, and thus we have to deal with the fact that $M$ might have holes.  However, if we assume, in addition to the hypothesis of Theorem \ref{MTT'}, that $M$ is Reifenberg flat, then we do obtain that $M$ is in fact (locally) a bi-Lipschitz image of an $n$-plane. We show this in this paper as a corollary to Theorem \ref{MTT'}.\\

A natural question is whether the hypotheses of Theorem \ref{MTT'}, that is the Ahlfors regularity of $M$, the Poincar\'e inequality (\ref{eqp}), and the Carleson condition (\ref{103}) imply that $M$ is Reifenberg flat. An affirmative answer to this question would directly imply (by the paragraph above) that the conclusion of Theorem \ref{MTT'} should be that $M$ is \emph{exactly} a bi-Lipschitz image of an $n$-plane instead of $M$ being just \emph{contained in} bi-Lipschitz image of an $n$-plane. A negative answer would show that the conclusion of Theorem \ref{MTT'} is the best that we can hope for. It is not surprising that the Poincar\'e inequality (\ref{eqp}) is the correct condition to explore in order to answer this question (which as we discuss below, will turn out negative). In fact, it is already known that (\ref{eqp}) encodes geometric properties of the set $M$.  \\

Let $(M, d_{0}, \mu)$ be a metric measure space, where $M \subset B_{2}(0)$ is an $n$-Ahlfors regular rectifiable set in $\mathbb{R}^{n+d}$,  $\mu =$  \( \mathcal{H}^{n} \mres M\) is the measure that lives on $M$, and $d_{0}$ is the metric on $M$ which is the restriction of the standard Euclidean metric on $\mathbb{R}^{n+d}$. In \cite{M1}, the author proves that the Poincar\'e inequality (\ref{eqp}) implies that $M$ is quasiconvex. More precisely,

\begin{definition} \label{defqc}
A metric space $(X,d)$ is $\kappa_{1}$-quasiconvex if there exists a constant $\kappa_{1} \geq 1$ such that for any two points $x$ and $y$ in $X$, there exists a rectifiable curve $\gamma$ in $X$, joining $x$ and $y$, such that $\txt{length}(\gamma) \leq \kappa_{1} \, d(x,y)$.
\end{definition}

\begin{theorem} (see \cite{M1} Theorem 5.5) \label{mt} \footnote{ Notice that Theorem 5.5 in \cite{M1} is stated and proved in the ambient space $\mathbb{R}^{n+1}$ (so $d=1$) and for $\lambda = 2$. However, the proof of Theorem 5.5 in \cite{M1} is independent from the co-dimension $d$ of $M$. Thus the exact same statement holds here in the higher co-dimension case, and the quasiconvexity constant  $\kappa_{1}$ stays independent of $d$. Moreover, it is very easy to see that Theorem 5.5 in \cite{M1} still holds with arbitrary $\lambda \geq 1$, and in that case, $\kappa_{1}$ would also depend on $\lambda$.
} Let $(M, d_{0}, \mu)$ be as discussed above. Suppose that $M$ satisfies the Poincar\'{e}-type inequality (\ref{eqp}). Then $(M, d_{0}, \mu)$ is $\kappa_{1}$-quasiconvex, with $\kappa_{1}= \kappa_{1}(n, \lambda, C_{M}, C_{P})$.\end{theorem}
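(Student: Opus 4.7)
The plan is to show that any two points $x, y \in M$ can be joined by a rectifiable curve $\gamma \subset M$ with $\text{length}(\gamma) \leq \kappa_1 |x-y|$. I will follow the classical template (in the spirit of Semmes and Heinonen--Koskela) by which a Poincar\'e-type inequality on a doubling metric measure space yields quasiconvexity: use (\ref{eqp}) to establish a quantitative connectivity estimate, and then build $\gamma$ as a limit of finer and finer chains. Ahlfors regularity of $M$ provides the doubling property, so the hypotheses fit precisely into this framework.

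The key technical ingredient is a connectivity lemma derived directly from (\ref{eqp}): for any $x \in M$ and $r > 0$, the ball $M \cap B_r(x)$ cannot be cleanly separated into two portions of comparable $\mu$-measure by a gap. Quantitatively, if $E \subset M \cap B_r(x)$ satisfies $\mu(E) \geq \tfrac{1}{2}\mu(M \cap B_r(x))$, then every point of $M \cap B_r(x)$ lies within a controlled Euclidean distance of $E$, the threshold depending only on $n, \lambda, C_M, C_P$. To prove this I would test (\ref{eqp}) against a Lipschitz approximation of $\chi_E$: the right-hand side is controlled by the $\mu$-mass of a thin collar around $\partial E$ (which vanishes if $E$ is truly separated from its complement), while Ahlfors regularity bounds the left-hand side from below by $c \, \mu(E)\mu(M \cap B_r(x)\setminus E)/\mu(M \cap B_r(x))^2$. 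A useful consequence is a near-midpoint statement: for any $x, y \in M$ there is a point $z \in M$ close to the Euclidean midpoint of $[x,y]$.

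With the connectivity lemma in hand, the curve is built by a standard chaining procedure. For each scale $\epsilon > 0$, I would extract a finite chain $x = z^{\epsilon}_0, z^{\epsilon}_1, \ldots, z^{\epsilon}_{N_\epsilon} = y$ in $M$ with $|z^{\epsilon}_{i+1} - z^{\epsilon}_i| \leq \epsilon$ and total length uniformly bounded by $\kappa_1 |x-y|$, by covering $M$ at scale $\epsilon$ and linking neighboring balls using the connectivity estimate. Parametrizing each chain as a rescaled piecewise-linear path $\gamma_\epsilon : [0,1] \to \mathbb{R}^{n+d}$, the family $\{\gamma_\epsilon\}$ is equicontinuous and uniformly bounded, so Arzel\`a--Ascoli yields a uniformly convergent subsequence with limit $\gamma : [0,1] \to M$ (the target remains in $M$ because $M$ is closed in $B_2(0)$), and lower semicontinuity of length gives $\text{length}(\gamma) \leq \kappa_1 |x-y|$.

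The main obstacle is quantitative bookkeeping: keeping track of how the connectivity threshold, and ultimately $\kappa_1$, depend on $n, \lambda, C_M, C_P$, and arranging the chain-length estimates so that they telescope uniformly across scales. The role of the dilation constant $\lambda \geq 1$ is essentially cosmetic here, since it only enlarges the ball on which $|\nabla^M f|^2$ is averaged on the right-hand side of (\ref{eqp}); this is exactly why, as the footnote observes, the proof of \cite[Theorem 5.5]{M1} (written for $d=1$ and $\lambda = 2$) adapts verbatim to arbitrary codimension $d$ and arbitrary $\lambda \geq 1$, with $\kappa_1$ picking up only a mild additional dependence on $\lambda$.
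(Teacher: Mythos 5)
Your outline follows a recognizable template, and the Arzel\`a--Ascoli endgame is fine, but the heart of the argument --- the quantitative connectivity lemma and the resulting length bound --- has a gap that the sketch glosses over.

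The connectivity lemma you state (``if $\mu(E)\geq\tfrac12\mu(M\cap B_r(x))$ then every point of $M\cap B_r(x)$ lies within a controlled distance of $E$'') cannot be obtained by a \emph{single} application of (\ref{eqp}) at scale $r$. Testing against a Lipschitz cutoff of $\chi_E$ (say $f=\min\{d(\cdot,E),\delta_0\}$, so $|\nabla^M f|\leq 1$) gives
$\min\{\mu(E),\mu(F)\}\,\delta_0/\mu(B_r)\lesssim C_P\,r$,
where $F=\{z\in M\cap B_r(x): d(z,E)>\delta_0\}$. Since the left side is always at most $\delta_0$, this forces only $\delta_0\lesssim C_P\,r$, which is no better than the trivial diameter bound $\delta_0\leq 2r$. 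One application of (\ref{eqp}) at scale $r$ controls oscillation at scale $r$, never at a strictly smaller scale, so no fixed threshold $\alpha r$ with $\alpha<1$ comes out. The ``thin collar'' heuristic has the same problem: without an a priori bound on the measure of that collar, the $1/\delta$ blow-up in the gradient is not compensated. Consequently the near-midpoint statement, and then the crucial claim that each $\epsilon$-chain has total length bounded by $\kappa_1|x-y|$ uniformly in $\epsilon$, are not actually established; the proposal silently assumes them in the chaining step.

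What is missing is the multiscale, telescoping use of the Poincar\'e inequality, which is most cleanly organized through the \emph{chain-distance function} rather than a ball-by-ball connectivity lemma. Fix $x$ and $\epsilon>0$, let $\rho_\epsilon(x,y)$ be the infimum of $\sum_i|z_{i+1}-z_i|$ over $\epsilon$-chains in $M$ from $x$ to $y$, and set $u=\rho_\epsilon(x,\cdot)\wedge L$ for a large truncation $L$. This function is locally $1$-Lipschitz on $M$, hence $|\nabla^M u|\leq 1$ $\mu$-a.e., so (\ref{eqp}) gives $\dashint_{B_s}|u-u_{B_s}|\,d\mu\leq C_P\,s$ at \emph{every} scale $s$. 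Chaining these estimates over a geometric sequence of radii $s_k\to 0$ centered at $y$ (and at $x$), and comparing the averages at the top scale, yields $|u(y)-u(x)|\leq C(n,\lambda,C_M,C_P)\,|x-y|$ for $\mu$-a.e.\ $y$, hence for all $y$ by the local Lipschitz continuity of $u$; thus $\rho_\epsilon(x,y)\leq \kappa_1|x-y|$, independently of $\epsilon$. Only at that point does the Arzel\`a--Ascoli passage to a limiting curve in the closed set $M$ apply, exactly as you describe. The telescoping across scales is what converts ``$|\nabla^M u|\leq 1$'' into a genuine length bound; your single-scale connectivity lemma cannot substitute for it.
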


There are many Poincar\'e-type inequalities found in literature that imply quasiconvexity  (see for example \cite{Ch}, \cite{DJS}, \cite{K1}, \cite{K2}). To state a couple of the main ones, let $(X, d, \nu)$ be a measure space endowed with a metric $d$ and a positive complete Borel regular measure $\nu$ supported on $X$. Denote by $B^{X}_{r}(x)$ the metric ball in $X$, center $x \in X$ and radius $r>0$. Moreover, assume that $0 < \nu(B_{r}^{X}(x)) < \infty$ for all $x \in X$ and $r>0$.

\begin{definition} (\textbf{\textit{p}-Poincar\'e inequality})\\
Let $p \geq 1$. $(X, d, \nu)$ is said to admit a $p$-Poincar\'e inequality if there exist constants $\kappa\geq1$ and $\lambda\geq1$ such that for any measurable function $u: X \to \mathbb{R}$ and for any upper gradient $\rho$ (see Definition \ref{defuppgrad}) of $u$, the following holds
\be \label{eqp2} \dashint_{B^{X}_{r}(x)} \left| u(y) - u_{B^{X}_{r}(x)} \right| \, d \nu(y) \leq \kappa \, r \, \left(\,\dashint_{B^{X}_{\lambda r}(x)} \rho(y)^{p} \, d \nu(y) \right)^{\frac{1}{p}}, \ee
where $x \in X$, $r>0$, and $u_{B^{X}_{r}(x)}  := \dashint_{B^{X}_{r}(x)} u \, d \nu$.
\end{definition}

\begin{definition} (\textbf{\textit{Lip}-Poincar\'e inequality})\\
Let $p \geq 1$. $(X, d, \nu)$ is said to admit a $Lip$-Poincar\'e inequality if there exist constants $\kappa\geq1$ and $\lambda\geq1$ such that for every Lipschitz function $f$ on $X$, and for every $x \in X$ and $r>0$, we have
\be \label{eqp3}  \dashint_{B^{X}_{r}(x)} \left| f(y) - f_{B^{X}_{r}(x)}\right| \, d \nu(y) \leq \kappa \, r \left(\,\dashint_{B^{X}_{\lambda r}(x)} (Lipf(y))^{p} \, d \nu(y) \right)^{\frac{1}{p}}, \ee
(see Definition \ref{deflip} for the definition of $Lipf$).
\end{definition}

These Poincar\'e inequalities are a-priori different because the right hand side varies according to the notion of ``derivative" used on the metric space. However, Keith has shown (see \cite{K1}, \cite{K2}) that if $(X, d, \nu)$ is a complete metric measure space with $\nu$ a doubling measure, then $(\ref{eqp2})$ and $(\ref{eqp3})$ are equivalent. It turns out that the Poincar\'e-type inequality (\ref{eqp}) is also related to (\ref{eqp2}) and (\ref{eqp3}).\\

In this paper,  we take $(M, d_{0}, \mu)$ as described above and prove that in this setting, the Poincar\'e-type inequalities (\ref{eqp}) (or a more generalized version of it, see (\ref{eqp1'}) below), (\ref{eqp2}), and (\ref{eqp3}) are equivalent. 

\begin{theorem} \label{epi}
Let $p \geq 1$, and let $(M, d_{0}, \mu)$ be a metric measure space, where $M \subset B_{2}(0)$ is an $n$-Ahlfors regular rectifiable set in $\mathbb{R}^{n+d}$,  $\mu =$  \( \mathcal{H}^{n} \mres M\) is the measure that lives on $M$, and $d_{0}$ is the metric on $M$ which is the restriction of the standard Euclidean metric on $\mathbb{R}^{n+d}$.Then, the following are equivalent:

\begin{enumerate} [(i)]

\item  There exist constants $\kappa\geq1$ and $\lambda\geq1$ such that for any measurable function $u: M \to \mathbb{R}$, for any upper gradient $\rho$ of $u$, and for every $x \in M$ and $r>0$, we have
\be \label{eqp2'} \dashint_{B_{r}(x)} \left| u(y) - u_{x,r} \right| \, d \mu(y) \leq \kappa \, r \, \left(\,\dashint_{B_{\lambda r}(x)} \rho(y)^{p} \, d \mu(y) \right)^{\frac{1}{p}}. \ee

\item There exist constants $\kappa \geq 1$, and $\lambda \geq 1$, such that for every Lipschitz function $f$ on $M$, and for every $x \in M$ and $r>0$, we have
\be \label{eqp3'}  \dashint_{B_{r}(x)} \left| f(y) - f_{x,r}\right| \, d \mu(y) \leq \kappa \, r \left(\,\dashint_{B_{\lambda r}(x)} (Lipf(y))^{p} \, d \mu(y) \right)^{\frac{1}{p}}. \ee

\item There exist constants $\kappa \geq 1$, and $\lambda \geq 1$, such that for every Lipschitz function $f$ on $\mathbb{R}^{n+d}$, and for every $x \in M$ and $r>0$, we have
\be \label{eqp1'} \dashint_{B_{r}(x)} \left| f(y) - f_{x,r}\right| \, d \mu(y) \leq \kappa \, r \left(\,\dashint_{B_{\lambda r}(x)} (|\nabla^{M}f|(y))^{p} \, d \mu(y) \right)^{\frac{1}{p}}. \ee
\end{enumerate}
\end{theorem}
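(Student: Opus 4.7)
The plan is to establish the cycle $(i) \Rightarrow (iii) \Rightarrow (ii) \Rightarrow (i)$, using Keith's equivalence theorem to supply the last arrow. Since $M$ is $n$-Ahlfors regular, $\mu$ is doubling, and replacing $M$ by its closure if necessary (which changes neither $\mu$ nor the validity of any of the three inequalities) we may assume $(M, d_0)$ is complete. Keith's theorem then delivers $(i) \Leftrightarrow (ii)$ directly on $(M, d_0, \mu)$, so only the implications $(i) \Rightarrow (iii)$ and $(iii) \Rightarrow (ii)$ require argument.

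For $(i) \Rightarrow (iii)$ I would show that, for every Lipschitz $f$ on $\mathbb{R}^{n+d}$, the restriction of $|\nabla^M f|$ to $M$ is an upper gradient of $f|_M$ on $(M, d_0, \mu)$. Let $\gamma : [0, L] \to M$ be any rectifiable curve parameterized by arc length; $\gamma$ is differentiable at $\mathcal{H}^{1}$-a.e. $t$, and at a.e.\ such $t$ the approximate tangent plane $T_{\gamma(t)} M$ exists with $\gamma'(t) \in T_{\gamma(t)} M$ (a standard fact for rectifiable sets, obtained by decomposing $M$ into Lipschitz $n$-images and applying Rademacher). Hence
\ben
\left| f(\gamma(L)) - f(\gamma(0)) \right| = \left| \int_0^L \nabla^M f(\gamma(t)) \cdot \gamma'(t)\,dt \right| \leq \int_\gamma |\nabla^M f|\,ds,
\een
so $|\nabla^M f|$ qualifies as the upper gradient $\rho$ in $(\ref{eqp2'})$, and inserting $u = f|_M$ into $(i)$ produces $(\ref{eqp1'})$.

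For $(iii) \Rightarrow (ii)$, given $f$ Lipschitz on $M$ I would extend it by McShane to $\tilde f$ Lipschitz on $\mathbb{R}^{n+d}$ with the same Lipschitz constant and apply $(iii)$ to $\tilde f$; since $\tilde f = f$ on $M$, the left-hand sides of $(\ref{eqp1'})$ and $(\ref{eqp3'})$ coincide, so the argument reduces to the pointwise inequality $|\nabla^M \tilde f(y)| \leq \txt{Lip}\,f(y)$ at $\mu$-a.e.\ $y \in M$. At $\mu$-a.e.\ $y$ the approximate tangent plane $T_y M$ exists and the tangential derivative $\nabla^M \tilde f(y)$ is defined intrinsically; for a dense set of unit vectors $v \in T_y M$ one can select, via the approximate tangent plane together with Ahlfors regularity, a sequence $y_k \in M$ with $y_k \to y$, $(y_k - y)/|y_k - y| \to v$, and $\txt{dist}(y_k, y + T_y M) = o(|y_k - y|)$. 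Along any such sequence
\ben
\frac{f(y_k) - f(y)}{|y_k - y|} \longrightarrow \nabla^M \tilde f(y) \cdot v,
\een
so $\txt{Lip}\,f(y) \geq |\nabla^M \tilde f(y) \cdot v|$ for $v$ in a dense subset of the unit sphere of $T_y M$, and taking the supremum over such $v$ yields the claimed bound.

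I expect the main obstacle to be the two measure-theoretic inputs about rectifiable sets invoked above: the tangency $\gamma'(t) \in T_{\gamma(t)} M$ at a.e.\ $t$ for rectifiable curves in $M$, and the existence of sequences in $M$ approaching $y$ tangentially in each direction $v \in T_y M$ in the strong sense just described. Both are standard features of the structure of $n$-rectifiable sets but have to be extracted carefully from the approximate tangent plane property and from Ahlfors regularity; once they are at hand the three Poincar\'e-type inequalities differ only in the choice of ``gradient'' appearing on the right-hand side, and the implications above close the cycle.
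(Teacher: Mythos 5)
Your cycle runs $(i)\Rightarrow(iii)\Rightarrow(ii)\Rightarrow(i)$, whereas the paper goes $(iii)\Rightarrow(ii)\Rightarrow(i)\Rightarrow(iii)$; both use Keith's theorem for the arrow between $(i)$ and $(ii)$, so the genuinely different parts are your treatments of $(i)\Rightarrow(iii)$ and $(iii)\Rightarrow(ii)$. The latter (McShane extension together with a pointwise bound $|\nabla^M\tilde f(y)|\le Lipf(y)$ for $\mu$-a.e.\ $y$) is a reasonable alternative to the paper's citation of a result from \cite{M1}, although, as you note, the existence of tangentially approaching sequences $y_k\in M$ needs to be pulled out of Ahlfors regularity plus the approximate-tangent-plane property with some care.

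However, there is a genuine gap in your $(i)\Rightarrow(iii)$: the assertion that $|\nabla^M f|$ (restricted to $M$) is an \emph{upper gradient} of $f|_M$ is false in general. The tangential derivative $\nabla^M f$ exists only at $\mu$-a.e.\ $y\in M$, i.e., off an exceptional set $E$ with $\mathcal{H}^n(E)=0$. For $n\ge 2$ such a set $E$ can still contain rectifiable curves of positive length (or portions of them of positive $\mathcal{L}^1$-measure), since $\mathcal{H}^n$-null does not imply $\mathcal{H}^1$-null. For such a curve $\gamma$, your claim that $T_{\gamma(t)}M$ exists and $\gamma'(t)\in T_{\gamma(t)}M$ at a.e.\ $t$ simply fails, and the fundamental-theorem-of-calculus computation does not go through. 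The correct statement is that $|\nabla^M f|$ (suitably extended) is a \emph{$p$-weak} upper gradient: the family of rectifiable curves spending positive time in $E$ has zero $p$-modulus (this is Lemma 1.42 of Björn--Björn, used by the paper). One then needs the additional fact that, on a metric measure space, the Poincaré inequality for all upper gradients is equivalent to the same inequality for all $p$-weak upper gradients (Björn--Björn Prop.~4.13, the paper's Theorem \ref{propbjorns}) before inserting $|\nabla^M f|$ on the right-hand side of $(\ref{eqp2'})$. Without that bridge, your argument does not establish $(i)\Rightarrow(iii)$.
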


Theorem \ref{epi} is interesting in its own right, as it shows that the Poincar\'e inequality (\ref{eqp}) \big (or more generally, (\ref{eqp1'}) \big) is equivalent to the other usual Poincar\'e-type inequalities on metric spaces that imply quasiconvexity. Moreover, Theorem \ref{epi} opens the door to many examples of spaces satisfying the Poincar\'e inequality (\ref{eqp1'}) as there are many examples in literature of spaces satisfying the $p$-Poincar\'e and $Lip$-Poincar\'e inequalities (see for example \cite{BS}, \cite{HK}, \cite{BB}, \cite{La}). This allows us to get an example of a set that is not Reifenberg flat, and yet satisfies all the hypotheses of Theorem \ref{MTT'}.

\begin{theorem} \label{construct}
There exists a non-Reifenberg flat, $n$-Ahlfors regular, rectifiable set $M \subset B_{2}(0) \subset \mathbb{R}^{n+d}$ that satisfies all the hypotheses of Theorem \ref{MTT'}.
\end{theorem}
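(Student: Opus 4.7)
The plan is to exhibit $M$ explicitly as an $n$-plane with a single small ball removed, and then use Theorem~\ref{epi} to reduce the verification of the Poincar\'e-type inequality (\ref{eqp}) to a classical $(1,p)$-Poincar\'e inequality on this Euclidean domain. Concretely, fix $x_{0}\in \mathbb{R}^{n}\times\{0\}\subset \mathbb{R}^{n+d}$ with $|x_{0}|=1/2$, a radius $\rho\in(0,1/8)$, and set
\ben
M \;=\; \bigl(B_{2}(0)\cap (\mathbb{R}^{n}\times\{0\})\bigr)\setminus B_{\rho}(x_{0}).
\een
By construction $0\in M\subset B_{2}(0)$, $M$ is rectifiable (it lies in an $n$-plane), and $M$ is $n$-Ahlfors regular: removing the single ball $B_{\rho}(x_{0})$ costs at most $\omega_{n}\rho^{n}$ of Hausdorff measure, and a short case analysis depending on whether $B_{r}(y)$ meets $B_{\rho}(x_{0})$ yields $c_{n}r^{n}\leq \mu(B_{r}(y))\leq C_{n}r^{n}$ for every $y\in M$ and $r>0$.

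The two remaining geometric hypotheses of Theorem~\ref{MTT'} are essentially automatic. The approximate tangent plane $T_{y}M$ equals $\mathbb{R}^{n}\times\{0\}$ for $\mu$-a.e.\ $y\in M$, so $\pi_{T_{y}M}$ is a constant matrix on $M$, the average $A_{x,r}$ coincides with it, and the integrand in (\ref{103}) is identically zero; hence the Carleson condition holds for any $\epsilon_{0}>0$. On the other hand, $M$ is not Reifenberg flat: at the point $y_{0}:=x_{0}+\rho e\in M$, for any unit vector $e$ in $\mathbb{R}^{n}\times\{0\}$ and any scale $r\leq \rho/4$, the set $M\cap B_{r}(y_{0})$ sits inside a half-disk of $\mathbb{R}^{n}\times\{0\}$ on the side of $y_{0}$ opposite to $x_{0}$, whereas the intersection of any affine $n$-plane $P'$ with $B_{r}(y_{0})$ is a full $n$-disk (or empty, which is worse). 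A short geometric argument, separating the cases where $P'$ is nearly parallel to $\mathbb{R}^{n}\times\{0\}$ (the ``far'' half of $P'\cap B_{r}(y_{0})$ then sits inside the hole, at distance $\geq c r$ from $M$) from the case where $P'$ tilts appreciably (and then $P'\cap B_{r}(y_{0})$ leaves $\mathbb{R}^{n}\times\{0\}$ by an amount $\geq cr$, again far from $M$), gives an absolute lower bound $d_{H}(M\cap B_{r}(y_{0}),P'\cap B_{r}(y_{0}))\geq c r$ independent of $r$ and $P'$. Hence $M$ fails to be $\epsilon$-Reifenberg flat for any $\epsilon<c$.

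The main obstacle, and the heart of the proof, is verifying the Poincar\'e-type inequality (\ref{eqp}) on $M$, and this is precisely where Theorem~\ref{epi} does the heavy lifting. By that equivalence, it suffices to establish the upper-gradient form (\ref{eqp2'}) with $p=2$ on the metric measure space $(M,d_{0},\mu)$. But, up to a set of $\mu$-measure zero, $M$ is an open subset of an $n$-plane obtained by deleting a single ball; such a domain is uniform (indeed a John domain), and uniform Euclidean domains are well known to support the $(1,p)$-Poincar\'e inequality with constants depending only on their uniformity constants; see, e.g., \cite{HK} or \cite{BB}. Concretely, any two points of $M$ can be joined by a rectifiable curve in $M$ whose length is comparable to their Euclidean distance (one detours around the hole when necessary), and the standard chaining argument then yields (\ref{eqp2'}). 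Applying Theorem~\ref{epi} upgrades (\ref{eqp2'}) to (\ref{eqp1'}) with $p=2$, which is exactly (\ref{eqp}). Together with the trivially satisfied Carleson condition, this checks every hypothesis of Theorem~\ref{MTT'} for the non-Reifenberg-flat set $M$, completing the proof of Theorem~\ref{construct}.
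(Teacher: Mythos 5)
Your construction follows the paper's blueprint exactly: remove a small shape from an $n$-plane, get Ahlfors regularity from the geometry, note the Carleson quantity vanishes because the tangent plane is constant, and obtain the Poincar\'e-type inequality (\ref{eqp}) by combining Theorem~\ref{epi} with a known $p$-Poincar\'e inequality for a nice Euclidean subdomain. The only differences are cosmetic: you delete a ball rather than a square and invoke the John/uniform-domain version of the Euclidean Poincar\'e inequality, whereas the paper deletes a square and cites the Bj\"orn--Shanmugalingam result for Lipschitz domains (your punctured disk is also a Lipschitz domain, so both references apply); and you spell out the failure of Reifenberg flatness where the paper declares it trivial. Two tiny repairs are needed for the hypotheses of Theorem~\ref{MTT'} to be met verbatim: take the closure of your $M$ (the paper's definition of an $n$-Ahlfors regular set requires $M$ closed, and you want the boundary circle of the hole in $M$ so that boundary-centered balls $B_r(y)$ make sense), and shrink the outer radius from $2$ to, say, $1$ so the closed set lies strictly inside $B_2(0)$ rather than touching $\partial B_2(0)$.
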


Theorem \ref{construct} shows that the hypotheses of Theorem \ref{MTT'} on the set $M$ are not strong enough to guarantee its Reifenberg flatness, and thus the conclusion of Theorem \ref{MTT'} is optimal.\\

The paper is structured as follows: in Section \ref{Pre}, we introduce some definitions and preliminaries. In Section \ref{secMT}, we prove Theorem \ref{MTT'}. Moreover, we prove that Theorem \ref{Th1Merhej} follows as a corollary from Theorem \ref{MTT'}. Section \ref{PIQ} is dedicated to proving that the Poincar\'e inequality (\ref{eqp1'}) is equivalent to the $p$-Poincar\'e and the $Lip$-Poincar\'e inequalities. Finally, in the last section, we prove Theorem \ref{construct} by constructing a concrete example of a set that is not Reifenberg flat, yet satisfies the hypotheses of Theorem \ref{MTT'}.

\section{Preliminaries} \label{Pre}
Throughout this paper, our ambient space is $\mathbb{R}^{n+d}$. $B_{r}(x)$ denotes the open ball center $x$ and radius $r$ in $\mathbb{R}^{n+d}$, while $\bar{B}_{r}(x)$ denotes the closed ball center $x$ and radius $r$ in $\mathbb{R}^{n+d}$. $d(.,.)$ denotes the distance function from a point to a set. $\mathcal{H}^{n}$ is the $n$-Hausdorff measure. Finally, constants may vary from line to line, and the parameters they depend on will always be specified in a bracket. For example, $C(n,d)$ will be a constant that depends on $n$ and $d$ that may vary from line to line. \\

We begin by the definitions needed starting section \ref{secMT} and onwards.

\begin{definition}
Let $M \subset \mathbb{R}^{N_{1}}$. A function $ f: M \rightarrow \mathbb{R}^{N_{2}}$ is called \emph{Lipschitz} if there exists a constant $K>0$, such that for all $x, \, y \in M$ we have
\begin{equation} \label{lip1}
|f(x) - f(y)| \leq K \, |x-y|.
\end{equation}
The smallest such constant is called the \emph{Lipschitz constant} and is denoted by $L_{f}$.
\end{definition}

\begin{definition}
A function $ f: \mathbb{R}^{N_{1}} \rightarrow \mathbb{R}^{N_{2}}$ is called $K$-\emph{bi-Lipschitz} if there exists a constant $K>0$, such that for all $x, \, y \in \mathbb{R}^{N_{1}}$ we have
\begin{center}
$K^{-1} |x-y| \leq |f(x) - f(y)| \leq K \, |x-y|.$ 
\end{center}
\end{definition}

Let's introduce the class of \emph{n-rectifiable} sets, and the definition of approximate tangent planes.

\begin{definition} \label{rect}
Let $M \subset \mathbb{R}^{n+d}$ be an $\mathcal{H}^{n}$-measurable set. $M$ is said to be countably \emph{n-rectifiable} if 
 \begin{center}$ M \subset M_{o} \cup \left(\displaystyle \bigcup_{i=1}^{\infty}f_{i}(A_{i})\right)$, \end{center}
where $ \mathcal{H}^{n}(M_{o}) = 0$, and $ f_{i} : A_{i} \rightarrow \mathbb{R}^{n+d}$ is Lipschitz, and $A_{i} \subset \mathbb{R}^{n}$, for  $i = 1, 2, \ldots$ 
\end{definition}

\begin{definition} \label{ats}
If $M$ is an $\mathcal{H}^{n}$-measurable subset of $\mathbb{R}^{n+d}$. We say that the $n$-dimensional subspace $P(x)$ is the \emph{approximate tangent space of $M$ at $x$}, if
\begin{equation}   \lim_{h \to 0}  h^{-n} \int_{M} {f \left (h^{-1}(y-x)\right) } \, d \mathcal{H}^{n}(y) = \int_{P(x)} f(y) \, d\mathcal{H}^{n}(y) \quad \forall f \in C^1_c(\mathbb{R}^{n+d}, \mathbb{R}).
\end{equation} 
\end{definition}

\begin{remark} \label{remunitnor}
Notice that if it exists, $P(x)$ is unique. From now on, we shall denote the tangent space of $M$ at $x$ by $T_{x}M$. Moreover, in the special case when $M$ has co-dimension 1, then one can define the unit normal $\nu$ to $M$ at the point $x \in M$ to be the unit normal to $T_{x}M$. Thus, the unit normal $\nu$ exists at every point $x \in M$ that admits a tangent plane, and of course, there are two choices for the direction of the unit normal.

\end{remark}

It is well known (see \cite{Si}; Theorem 11.6) that $n$-rectifiable sets have tangent planes at $\mathcal{H}^{n}$ almost every point in the set.

\begin{definition} \label{deftander}
Let $f$ be a real valued Lipschitz function on $\mathbb{R}^{n+d}$. The tangential derivative of $f$ at the point $y \in M$ id denoted by $\nabla^{M}f(y)$ and defined as follows:

\begin{equation} \label{td'} \nabla^{M}f(y) = \nabla (f|_{L}) (y)\end{equation}
where $L := y + T_{y}M$, $f|_{L}$ is the restriction of $f$ on the affine subspace $L$, and $\nabla(f|_{L})$ is the usual gradient of $f|_{L}$.\\

In the special case when $f$ is a smooth function on $\mathbb{R}^{n+d}$, we have
\begin{equation} \label{td} \nabla^{M}f(y) = \pi_{T_{y}M} (\nabla f (y)),\end{equation}
where $\pi_{T_{y}M}$ is the orthogonal projection of $\mathbb{R}^{n+1}$ on $T_{y}M$, and $\nabla f$ is the usual gradient of $f$.
\end{definition}

Note that $\nabla^{M}f(y)$ exists at $\mathcal{H}^{n}$- almost every point in $M$.\\

We also need to define the notion of \emph{Reifenberg flatness}: 
\begin{definition} Let $M$ be an $n$-dimensional subset of $\mathbb{R}^{n+d}$. We say that $M$ is $\epsilon$-Reifenberg flat for some $\epsilon >0$, if for every $x \in M$   and $0 < r \leq \frac{1}{10^{4}}$, we can find an $n$-dimensional affine subspace $P(x,r)$ of $\mathbb{R}^{n+d}$ that contains $x$ such that 

\begin{equation*}
d(y, P(x,r)) \leq \epsilon r \quad \txt{for} \,\,y \in M \cap B_{r}(x),
\end{equation*} 
and 
\begin{equation*} 
d(y, M) \leq \epsilon r \quad \txt{for} \,\, y \in P(x,r) \cap B_{r}(x).
\end{equation*} 
 \end{definition}

\begin{remark}
Notice that the above definition is only interesting if $\epsilon$ is small, since any set is 1-Reifenberg flat. \end{remark}

In the proof of our Theorem \ref{MTT'}, we need to measure the distance between two $n$-dimensional planes. We do so in terms of normalized local Hausdorff distance:

\begin{definition}
Let $x$ be a point in $\mathbb{R}^{n+d}$ and let $r >0$. Consider two closed sets $E,\,F \subset \mathbb{R}^{n+d}$ such that both sets meet the ball $B_{r}(x)$. Then,
\begin{equation*} d_{x,r}(E,F) = \frac{1}{r} \, \txt{Max} \left\{ \sup_{y \in E \cap B_{r}(x)} \txt{dist}(y,F) \,\,; \sup_{y \in F \cap B_{r}(x)} \txt{dist}(y,E) \right\} \end{equation*}
is called the normalized Hausdorff distance between $E$ and $F$ in $B_{r}(x)$.
\end{definition}

Let us recall the definition of an $n$-Ahlfors regular measure and an $n$-Ahlfors regular set:

\begin{definition}
Let $M \subset \mathbb{R}^{n+d}$ be a closed, $\mathcal{H}^{n}$ measurable set, and let $\mu=$  \( \mathcal{H}^{n} \mres M\) be the $n$-Hausdorff measure restricted to $M$. We say that $\mu$ is $n$-Ahlfors regular if there exists a constant $C_{M} \geq 1$, such that for every $x \in M$ and $ 0< r \leq 1$, we have 
\begin{equation} \label{alfh}  C_{M}^{-1} \, r^{n} \leq \mu(B_{r}(x)) \leq C_{M} \, r^{n}.\end{equation}

In such a case, the set $M$ is called an $n$-Ahlfors regular set, and $C_{M}$ is referred to as the Ahlfors regularity constant.
\end{definition}

Let us now move to definitions and notations needed in sections \ref{PIQ} and \ref{MCSHH}. In these sections, $(X,d)$ denotes a space $X$ endowed with a metric $d$. $B^{X}_{r}(x)$ denotes the open metric ball of center $x \in X$ and radius $r>0$. Moreover, $(X, d, \nu)$ denotes a measure space endowed with a metric $d$ and a positive complete Borel regular measure $\nu$ supported on $X$ such that $0 < \nu(B_{r}^{X}(x)) < \infty$ for all $x \in X$ and $r>0$.

\begin{definition}
Let $(X,d, \nu)$ be a metric measure space. We say that $\nu$ is a doubling measure if there is a constant $\kappa_{0} >0$ such that 
\ben \nu\left(B^{X}_{2r}(x)\right)\leq \kappa_{0} \, \nu\left(B^{X}_{r}(x)\right), \een where $x \in X$, $r>0$.
\end{definition}

In sections \ref{PIQ} and \ref{MCSHH}, a curve $\gamma$ in a metric space $(X,d)$ is a continuous non-constant map from a compact interval $I \subset \mathbb{R}$ into $X$. $\gamma$ is said to be rectifiable if it has finite length, where the latter is denoted by $l(\gamma)$. Thus, any rectifiable curve can be parametrized by arc length, and we will always assume that it is.\\

Let us now define the notions of upper gradients, $p$-weak upper gradients, and the Local Lipschitz constant function. 

\begin{definition} \label{defuppgrad}
A non-negative Borel function $\rho: X \rightarrow [0, \infty]$ is said to be an \emph{upper gradient} of a function $u: X \rightarrow \mathbb{R}$ if 
\ben  |u(\gamma(0)) - u(\gamma(l_{\gamma}))| \leq \int_{\gamma} \rho \, ds,  \een
for any rectifiable curve $\gamma : [0, l_{\gamma}] \to X$.
\end{definition}

\begin{definition}
Let $p \geq 1$ and let $\Gamma$ be a family of rectifiable curves on $X$. We define the \emph{$p$-modulus} of $\Gamma$ by 
\ben \txt{Mod}_{p}(\Gamma) = \inf \int_{X} g^{p} \, d \nu \een
where the infimum is taken over all nonnegative Borel functions $g$ such that $\int_{\gamma} g \, ds \geq 1$ for all $\gamma \in \Gamma$.
\end{definition}

\begin{definition}
A non-negative measurable function $\rho: X \rightarrow [0, \infty]$ is said to be a \emph{p-weak upper gradient} of a function $u: X \rightarrow \mathbb{R}$ if 
\ben  |u(\gamma(0)) - u(\gamma(l_{\gamma}))| \leq \int_{\gamma} \rho \, ds,  \een
for $p$-a.e. rectifiable curve $\gamma : [0, l_{\gamma}] \to X$ (that is, with the exception of a curve family of zero $p$-modulus).

\end{definition}

\begin{definition} \label{deflip} Let $f$ be a Lipschitz function on a metric measure space $(X,d,\nu)$. The local Lipschitz constant function of $f$ is defined as follows
\be \label{lip} Lipf(x) = \lim_{r \to 0} \sup_{y \in B^{X}_{r}(x), \,y \neq x} \frac{|f(y) - f(x)|}{d(y,x)}, \,\,\,\,\, x \in X,\ee
where $B_{r}^{X}(x)$ denotes the metric ball in $X$, center $x$, and radius $r$.
\end{definition}

\begin{remark} Let us note here that for any Lipschitz function $f$, $L_{f}$ denotes the usual Lipschitz constant (see sentence below (\ref{lip1})), whereas $Lipf(.)$ stands for the local Lipschitz constant function defined above. \end{remark}

\section{A bi-Lipschitz parameterization of $M$} \label{secMT}

The main goal in this section is to prove Theorem \ref{MTT'}. We begin with three linear Algebra lemmas needed to prove the theorem, as they can be stated and proved independently. 

\begin{lemma} \label{l4}
In the next lemma, let $V$ be an $n$-dimensional subspace of $\mathbb{R}^{n+d}$. Denote by $\pi_{V}$ the orthogonal projection on $V$. Then, there exists a $\delta_{0}= \delta_{0}(n,d) > 0$, such that for any $\delta \leq \delta_{0}$, and for any linear operator $L$ on $\mathbb{R}^{n+d}$ such that 
\be \label{l41} || \pi_{V} - L || \leq \delta , \ee
where $|| . ||$ denotes the induced operator norm, $L$ has exactly $n$ eigenvalues  $\lambda_{1}, \ldots , \lambda_{n}$ such that
\be \label{l42'} |\lambda_{j}| \geq 1 - (n+d) \, \delta \geq \frac{3}{4}, \quad \forall \, j \in \{ 1 , \ldots, n\}, \ee
and exactly  $d$ eigenvalues $\lambda_{n+1}, \ldots , \lambda_{n+d}$, such that 
\be \label{l42} |\lambda_{j}| \leq (n+d) \, \delta \leq \frac{1}{4}, \quad \forall \, j \in \{ n+1 , \ldots, n+d\}.  \ee
\end{lemma}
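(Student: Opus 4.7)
The plan is to reduce to a coordinate system in which $\pi_V$ is diagonal and then combine two classical perturbation tools: Gershgorin's circle theorem to \emph{locate} the eigenvalues, and continuity of the roots of the characteristic polynomial along a path from $\pi_V$ to $L$ to \emph{count} them with the correct multiplicities.

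First I would fix an orthonormal basis $\{v_1,\ldots,v_{n+d}\}$ of $\mathbb{R}^{n+d}$ with $\{v_1,\ldots,v_n\}$ a basis of $V$ and $\{v_{n+1},\ldots,v_{n+d}\}$ a basis of $V^\perp$. In this basis $\pi_V$ is represented by the diagonal matrix with $1$'s in the first $n$ slots and $0$'s in the last $d$. Write $E:=L-\pi_V$, so $\|E\|\leq\delta$ by (\ref{l41}). Since the basis is orthonormal, each entry $e_{ij}=\langle v_i,Ev_j\rangle$ satisfies $|e_{ij}|\leq\|E\|\leq\delta$. Consequently the diagonal entries of $L$ are $1+e_{ii}$ for $i\leq n$ and $e_{ii}$ for $i>n$, while every off-diagonal row sum is bounded by $(n+d-1)\delta$.

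Next, applying Gershgorin's circle theorem to this matrix representation of $L$, every eigenvalue $\mu\in\mathbb{C}$ of $L$ lies in a disk $\overline{D(a_{ii},R_i)}$ for some $i$, where $R_i\leq(n+d-1)\delta$. A short triangle-inequality computation therefore gives
\ben
\mu\in \overline{D\big(1,(n+d)\delta\big)}\;\cup\;\overline{D\big(0,(n+d)\delta\big)}.
\een
Choosing $\delta_0=\delta_0(n,d)=\tfrac{1}{4(n+d)}$, for every $\delta\leq\delta_0$ these two closed disks have radius at most $\tfrac14$ and are thus disjoint.

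Finally, to count exactly $n$ eigenvalues near $1$ and $d$ near $0$, I would use the continuous deformation $L_t:=\pi_V+tE$ for $t\in[0,1]$. For every $t\in[0,1]$ we have $\|L_t-\pi_V\|\leq\delta$, so the same Gershgorin argument locates all eigenvalues of $L_t$ in the disjoint union $\overline{D(1,(n+d)\delta)}\cup \overline{D(0,(n+d)\delta)}$. Since the roots of the characteristic polynomial depend continuously on the coefficients (hence on $t$) and cannot jump between two disjoint closed sets, the number of eigenvalues in each disk (counted with algebraic multiplicity) is constant in $t$. At $t=0$, $L_0=\pi_V$ has exactly $n$ eigenvalues equal to $1$ and exactly $d$ equal to $0$; therefore at $t=1$, $L$ has exactly $n$ eigenvalues in $\overline{D(1,(n+d)\delta)}$ and exactly $d$ in $\overline{D(0,(n+d)\delta)}$. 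The first yields $|\lambda_j|\geq 1-(n+d)\delta\geq \tfrac34$, proving (\ref{l42'}), and the second yields $|\lambda_j|\leq(n+d)\delta\leq\tfrac14$, proving (\ref{l42}).

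The only genuinely delicate point is the \emph{counting} step: the location of eigenvalues follows immediately from any standard perturbation bound, but ensuring that $n$ of them sit near $1$ and $d$ near $0$ requires both the disjointness of the two Gershgorin regions (forcing the choice of $\delta_0$) and the continuity-of-roots argument along $L_t$, taken in $\mathbb{C}$ since $L$ need not be symmetric and may in principle have complex eigenvalues.
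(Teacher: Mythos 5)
Your proof is correct and follows essentially the same route as the paper's: diagonalize $\pi_V$ in an orthonormal basis, bound the entries of $L$ in that basis using $\|L-\pi_V\|\le\delta$, and apply Gershgorin's circle theorem after choosing $\delta_0$ small enough that the disks clustered near $1$ are disjoint from those clustered near $0$. The only stylistic difference is that you re-prove the counting part of Gershgorin's theorem explicitly via the deformation $L_t=\pi_V+tE$ and continuity of eigenvalues, whereas the paper simply invokes the strong form of Gershgorin's theorem (which already guarantees the exact counts once the disk unions are disjoint — and is itself proved by precisely the homotopy argument you give).
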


\begin{proof}
Since $\pi_{V}$ is an orthogonal projection, then there exists an orthonormal basis $\{ w_{1} , \ldots , w_{n+d}  \} $ of $\mathbb{R}^{n+d}$ such that the matrix representation of $\pi_{V}$ in this basis is 
\[
\pi_{V} =
  \begin{pmatrix}
    Id_{n} & 0 \\
    0 & 0
 \end{pmatrix}
\]
where $Id_{n}$ denotes the $n \times n$ identity matrix. \\
\\
Let $\delta < \delta_{0}$ (with $\delta_{0}$ to be determined later), and suppose $L$ is as in the statement of the lemma. Let $L = (l_{ij})_{ij}$ be the matrix representation of $L$ in the basis $\{ w_{1} , \ldots , w_{n+d}  \} $. Then, by (\ref{l41}), we have 
\ben |\pi_{V} w_{j} - L w_{j} |^{2} \leq \delta^{2}, \quad \forall \, j \in \{ 1 \ldots n+d \}, \een
that is,
\be \label{l43} |1 - l_{jj}|^{2} + \sum_{i\neq j} |l_{ij}|^{2} \leq \delta^{2}, \quad \forall \, j \in \{ 1 \ldots n \}, \ee
and 
\be \label{l44} \sum_{i=1} ^{n+d} |l_{ij}|^{2} \leq \delta^{2}, \quad \forall \, j \in \{ n+1 \ldots n+d \}. \ee
Now, for each $ j \in  \{ 1 \ldots n+d\}$, consider the closed disk $D_{j}$ in the complex plane, of center $(l_{jj}, 0)$ and radius $R_{j} = \displaystyle \sum_{i \neq j} |l_{ij}|$. Notice that by (\ref{l43}), (\ref{l44}), and the fact that $\delta < \delta_{0}$, we have 
\be \label{l45'} |1 - l_{jj} | \leq \delta \leq \delta_{0}, \quad \forall \, j \in \{ 1 \ldots n \}, \ee 
\be \label{l45} |l_{jj}| \leq \delta \leq \delta_{0}, \quad \forall \, j \in \{ n+1 \ldots n+d \}, \ee
and \be \label{l46} R_{j} \leq (n+d-1) \delta \leq (n+d-1) \, \delta_{0}, \quad \forall \, j \in \{1 \ldots n+d \}. \ee
Choosing $\delta_{0}$ such that $(n+d-1) \delta_{0} \leq \displaystyle \frac{1}{8}$ , we can guarantee that $\displaystyle \bigcup_{j=1}^{n}D_{j}$ is disjoint from $\displaystyle \bigcup_{j=n+1}^{n+d}D_{j}$. Thus, by the Gershgorin circle theorem (see \cite{LeV}, p.277-278), $\displaystyle \bigcup_{j=1}^{n}D_{j}$ contains exactly $n$ eigenvalues of $L$, and $\displaystyle \bigcup_{j=n+1}^{n+d}D_{j}$ contains exactly $d$ eigenvalues of $L$. The lemma follows from (\ref{l45'}), (\ref{l45}) and (\ref{l46})
\end{proof}

\textbf{Notation:}\\
Let $V$ be an affine subspace of $\mathbb{R}^{n+d}$ of dimension $k$, $ k \in \{ 0, \dots, n-1\}$. 
Denote by $N_{\delta}(V)$, the $\delta$-neighborhood of $V$, that is, 
\ben N_{\delta}(V) = \left\{ x \in \mathbb{R}^{n+1} \,\,\txt{such that} \,\, d(x,V) < \delta \right\}. \een

\begin{lemma} \label{l1} (see \cite{M1}, Lemma 3.1) \footnote{Notice that Lemma 3.1 in \cite{M1} is stated and proved in the ambient space $\mathbb{R}^{n+1}$, whereas Lemma \ref{l1} here has $\mathbb{R}^{n+d}$ as the ambient space. However, one can very easily adapt the same proof of Lemma 3.1 in \cite{M1} to this higher co-dimension case here, while noticing that $c_{0}$ in the latter case should also depend on the co-dimension $d$.} Let $M$ be an $n$-Ahlfors regular subset of $\mathbb{R}^{n+d}$, and let $\mu =$  \( \mathcal{H}^{n} \mres M\) be the Hausdorff measure restricted to $M$. There exists a constant $c_{0} = c_{0}(n,d, C_{M}) \leq \displaystyle \frac{1}{2}$ such that the following is true: Fix $x_{0} \in M$, $r_{0} < 1$ and let $r = c_{0} \, r_{0}$. Then, for every $V$, an affine subspace of $\mathbb{R}^{n+d}$ of dimension $0 \leq k \leq n-1$,
there exists $x \in M \cap B_{r_{0}}(x_{0})$ such that $x \notin  N_{11 r}(V)$ and $B_{r}(x) \subset B_{2 r_{0}}(x_{0})$. 
\end{lemma}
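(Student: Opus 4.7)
\textbf{Plan of proof for Lemma \ref{l1}.} My plan is to prove the lemma by a measure-theoretic contradiction argument, comparing the $\mu$-mass that Ahlfors regularity forces inside $M \cap B_{r_0}(x_0)$ against an upper bound on how much of that ball can sit in the thin tubular neighborhood $N_{11r}(V)$. The containment $B_r(x) \subset B_{2r_0}(x_0)$ is essentially free: as soon as $c_0 \leq 1$, any $x \in M \cap B_{r_0}(x_0)$ satisfies $B_r(x) \subset B_{r_0+r}(x_0) \subset B_{2r_0}(x_0)$. So the whole content of the lemma is to produce $x \in M \cap B_{r_0}(x_0)$ with $x \notin N_{11r}(V)$, which I will do by showing
\ben
\mu\bigl(M \cap B_{r_0}(x_0) \cap N_{11r}(V)\bigr) < \mu\bigl(M \cap B_{r_0}(x_0)\bigr)
\een
whenever $c_0 = c_0(n,d,C_M)$ is small enough.

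First, by the lower Ahlfors bound (\ref{alfh}) applied at $x_0 \in M$ with radius $r_0 < 1$, I immediately get $\mu(M \cap B_{r_0}(x_0)) \geq C_M^{-1} r_0^n$. Next, I would bound the mass of the slab. Since $\dim V = k \leq n-1$, the piece $V \cap B_{r_0+11r}(x_0)$ can be covered by at most $C(n,d)(r_0/r)^k$ balls of radius $11r$ (a standard volume/packing count in the $k$-plane $V$). Enlarging each of those balls to radius $22r$ (or any fixed multiple) yields a cover of $N_{11r}(V) \cap B_{r_0}(x_0)$, so the upper Ahlfors bound in (\ref{alfh}) gives
\ben
\mu\bigl(N_{11r}(V) \cap B_{r_0}(x_0)\bigr) \leq C(n,d)\Bigl(\tfrac{r_0}{r}\Bigr)^{k} \cdot C_M (22r)^n \leq C(n,d,C_M)\, r_0^k\, r^{n-k}.
\een
Substituting $r = c_0 r_0$ and using $n-k \geq 1$, the right-hand side is at most $C(n,d,C_M)\, c_0\, r_0^n$.

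Finally, I will choose $c_0 = c_0(n,d,C_M) \leq \tfrac{1}{2}$ small enough that $C(n,d,C_M)\, c_0 < C_M^{-1}$. The two displays above then yield a strict inequality between the two $\mu$-masses, so $M \cap B_{r_0}(x_0) \setminus N_{11r}(V)$ has positive $\mu$-measure and in particular is non-empty; any point $x$ in it does the job. I do not expect any serious obstacle: the only step demanding a little care is the covering count for the tubular neighborhood of a $k$-plane (it is important that $k \leq n-1$, so that one wins a factor $r^{n-k} \leq r$ to absorb into $c_0$), and the mild worry that the constant in the covering estimate also carries a mild dependence on the codimension $d$ through the ambient dimension $n+d$, which is why the resulting $c_0$ must depend on $d$ as well as on $n$ and $C_M$.
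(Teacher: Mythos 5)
Your measure-comparison / packing argument is correct and is the standard way to prove this kind of ``find a point far from a lower-dimensional plane'' lemma for Ahlfors regular sets; the paper itself just cites \cite{M1}, Lemma 3.1, which (by the nature of the statement) almost certainly runs along exactly these lines, so you are on the same route as the source.

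One small technical wrinkle you should patch before calling it done: the upper Ahlfors bound (\ref{alfh}) is only stated for balls centered at points of $M$, whereas the balls $B_{22r}(v_i)$ in your cover are centered at points $v_i \in V$. The fix is the usual one and costs only a constant: for each $i$, either $B_{22r}(v_i) \cap M = \emptyset$ (so it contributes nothing to the $\mu$-mass) or you pick $z_i \in M \cap B_{22r}(v_i)$ and note $B_{22r}(v_i) \subset B_{44r}(z_i)$, then apply (\ref{alfh}) to $B_{44r}(z_i)$ (valid as long as $44r \leq 1$, i.e. $c_0 \leq 1/44$, which you can absorb into the final smallness requirement on $c_0$). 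This replaces $22^n$ by $44^n$ in your constant and changes nothing else. As a side remark, because your cover counts $k$-balls inside the $k$-plane $V$ rather than $(n+d)$-balls in the ambient space, the covering number $C(n)(r_0/r)^k$ actually does not depend on $d$ at all, so your approach would even give a $c_0$ independent of $d$; stating $c_0 = c_0(n,d,C_M)$ as in the lemma is of course still valid (just conservative), and a cover of the full tube $N_{11r}(V)\cap B_{r_0}(x_0)$ by ambient balls, which is perhaps what \cite{M1} does, would naturally pick up the $d$-dependence.
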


\begin{lemma} \label{l3} (see \cite{M1} Lemma 3.3) \footnote{Notice that Lemma 3.3 in \cite{M1} is stated and proved in the ambient space $\mathbb{R}^{n+1}$, whereas Lemma \ref{l3} here has $\mathbb{R}^{n+d}$ as the ambient space. However, the proof of Lemma 3.3 in \cite{M1} is in fact independent from the co-dimension $d$ of $M$. Thus the exact same proof holds here, and the constant $K_{1}$ stays independent of $d$.} Fix $R>0$, and let $\{u_{1}, \ldots u_{n} \}$ be $n$ vectors in $\mathbb{R}^{n+d}$. Suppose there exists a constant $K_{0} >0$ such that 

\be \label{44'} |u_{j}| \leq K_{0} \, R \quad \forall j \in \{1,\ldots, n\}. \ee
Moreover, suppose there exists a constant $0 < k_{0} < K_{0}$, such that 
\be \label{44}|u_{1}|\geq k_{0} \, R,\ee
and 
\be \label{45} u_{j} \notin N_{k_{0}R}\big(span\{u_{1}, \ldots u_{j-1}\} \big)  \quad \forall j \in \{2,\ldots, n\}. \ee

Then, for every vector $v \in V:= span\{u_{1}, \ldots u_{n}\} $, $v$ can be written uniquely as 
\be v = \sum_{j=1}^{n} \beta_{j}u_{j},\ee
where \be \label{46} |\beta_{j}| \,\leq K_{1}\frac{1}{R} \, |v|, \quad \forall j \in \{1,\ldots, n\} \ee
with $K_{1}$ being a constant depending only on $n$, $k_{0}$, and $K_{0}$.
\end{lemma}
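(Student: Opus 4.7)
The hypothesis (\ref{45}) forces each $u_j$ to lie at distance at least $k_0 R$ from $\mathrm{span}\{u_1,\ldots,u_{j-1}\}$, so $u_1,\ldots,u_n$ are linearly independent and form a basis of $V$; in particular every $v \in V$ admits a unique representation $v = \sum_{j=1}^{n} \beta_j u_j$. My plan is to apply Gram-Schmidt to $\{u_1,\ldots,u_n\}$ and exploit the fact that the resulting change-of-basis matrix is upper triangular with diagonal entries bounded below by $k_0 R$ and off-diagonal entries bounded above by $K_0 R$; the bound (\ref{46}) will then fall out of back-substitution.

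Concretely, set $w_1 := u_1$ (so $|w_1| \geq k_0 R$ by (\ref{44})) and, for $j \geq 2$, let $w_j$ be the orthogonal projection of $u_j$ onto $\big(\mathrm{span}\{u_1,\ldots,u_{j-1}\}\big)^{\perp}$, so that $|w_j| = d\bigl(u_j,\mathrm{span}\{u_1,\ldots,u_{j-1}\}\bigr) \geq k_0 R$ by (\ref{45}). Define $e_j := w_j/|w_j|$; then $\{e_1,\ldots,e_n\}$ is an orthonormal basis of $V$, and writing $u_j = \sum_{i=1}^{j} s_{ij}\, e_i$ produces an upper triangular matrix $S = (s_{ij})$ with $s_{jj} = |w_j| \geq k_0 R$ and, for $i \leq j$, $|s_{ij}| = |\langle u_j, e_i\rangle| \leq |u_j| \leq K_0 R$ by (\ref{44'}).

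Given $v \in V$, expand $v = \sum_{i=1}^{n} \alpha_i e_i$, so $|\alpha_i| \leq |v|$ by orthonormality. Substituting $u_j = \sum_{i \leq j} s_{ij}\, e_i$ into $v = \sum_j \beta_j u_j$ and matching $e_i$-coefficients yields the triangular system $\sum_{j \geq i} s_{ij}\, \beta_j = \alpha_i$, which I solve by back-substitution starting from $\beta_n = \alpha_n/s_{nn}$. At the $i^{\mathrm{th}}$ step,
\ben
|\beta_i| \;\leq\; \frac{1}{|s_{ii}|}\Bigl(|\alpha_i| + \sum_{j > i} |s_{ij}|\,|\beta_j|\Bigr) \;\leq\; \frac{1}{k_0 R}\Bigl(|v| + K_0 R \sum_{j > i} |\beta_j|\Bigr).
\een
Defining $A_0 := 1/k_0$ and recursively $A_m := \tfrac{1}{k_0}\bigl(1 + K_0 \sum_{l=0}^{m-1} A_l\bigr)$, an easy induction on $m = n - i$ gives $|\beta_i| \leq A_{n-i}\,|v|/R$. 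Setting $K_1 := \max_{0 \leq m \leq n-1} A_m$, which depends only on $n$, $k_0$, $K_0$, yields (\ref{46}).

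I expect no real obstacle here: once Gram-Schmidt reduces the problem to back-substitution on a triangular system with explicitly controlled entries, everything is routine. The only point to watch is the nested recursion for $A_m$, which grows rapidly in $n$ but stays finite for each fixed $n$, consistent with the allowed dependence $K_1 = K_1(n, k_0, K_0)$.
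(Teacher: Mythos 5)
The paper itself gives no proof of Lemma~\ref{l3}; it simply cites Lemma~3.3 of \cite{M1}, so there is no in-text argument to compare against. On its own terms your proposal is correct and complete: hypothesis~(\ref{45}) guarantees each Gram--Schmidt remainder $w_j$ has norm $|w_j| = d\bigl(u_j,\,\mathrm{span}\{u_1,\ldots,u_{j-1}\}\bigr) \geq k_0 R$ (with $|w_1|\geq k_0 R$ from (\ref{44})), the entries of the resulting upper-triangular change-of-basis matrix satisfy $k_0R \leq s_{jj} \leq K_0 R$ and $|s_{ij}| \leq K_0 R$ for $i<j$ by Cauchy--Schwarz and (\ref{44'}), and back-substitution with the recursion $A_m = \tfrac{1}{k_0}(1 + K_0\sum_{l<m}A_l)$ does give $|\beta_i| \leq A_{n-i}|v|/R$ by a clean downward induction, so $K_1 = \max_{0\le m\le n-1}A_m$ depends only on $n,k_0,K_0$ as claimed. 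This Gram--Schmidt-plus-triangular-solve route is the natural one for a "quantitative linear independence implies bounded coordinates'' statement, and since the geometric input is one-dimensional at each step the argument is transparently independent of the ambient codimension $d$, matching the footnote's remark. No gaps.
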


Throughout the rest of the paper, $M$ denotes an $n$-Ahlfors regular rectifiable subset of $\mathbb{R}^{n+d}$ and $\mu =$  \( \mathcal{H}^{n} \mres M\) denotes the Hausdorff measure restricted to $M$. The average of a function $f$ on the ball $B_{r}(x)$ is denoted by 
\begin{equation} \label{average} f_{x,r}= \dashint_{B_{r}(x)} f  \, d \mu(y) =  \displaystyle \frac{1}{\mu(M \cap B_{r}(x))}   \int_{B_{r}(x)} f \, d \mu(y). \end{equation}

We recall the statement of Theorem \ref{MTT'}: if $M$ satisfies the Poincar\'e-type condition (\ref{eqp}), and if the Carleson-type condition (\ref{103}) on the oscillation of the tangent planes to $M$ is satisfied, and if then $M$ is contained in a bi-Lipschitz image of an $n$-dimensional plane.\\

To prove this theorem, we follow steps similar to those used in \cite{M1} to prove the co-dimension 1 case (see Theorem 1.5 in \cite{M1}) which is stated as Theorem \ref{Th1Merhej} in this paper. First, we define what we call the $\alpha$-numbers

\be \label{a'} \alpha(x,r) := \left(\,\dashint_{B_{r}(x)} |\pi_{T_{y}M} - A_{x,r}|^{2} \, d \mu \right)^{\frac{1}{2}}, \ee
where $x \in M$, and $0 < r \leq \displaystyle \frac{1}{10}$, $\pi_{T_{y}M}$ has $\big ( a_{ij}(y) \big)_{ij}$ as its matrix representation in the standard basis of $\mathbb{R}^{n+d}$, and $A_{x,r}= \big( (a_{ij})_{x,r} \big)_{ij}$  is the matrix whose ${ij}^{th}$ entry is the average of the function $a_{ij}$ in the ball $B_{r}(x)$.\\

These numbers are the key ingredient to proving our theorem. In Lemma \ref{l5}, we show that  the Carleson condition (\ref{103}) implies that these numbers are small at every point $x \in M$ and every scale $0 < r < \frac{1}{10}$. Moreover, for every point $x \in M$, and series $\displaystyle \sum_{i=1}^{\infty} \alpha^2(x,  10^{-j})$ is finite. Then, in Theorem \ref{t1}, we use the Poincar\'e-type inequality to get an $n$-plane $P_{x,r}$ at every point $x \in M$ and every scale $0<r \leq\frac{1}{10 \lambda}$  such that the distance (in integral form) from $M \cap B_{r}(x)$ to $P_{x,r}$ is bounded by $\alpha(x,\lambda r)$. This means, by Lemma \ref{l5}, that those distances are small, and for a fixed point $x$, when we add these distances at the scales $ 10^{-j}$ for $j \in \mathbb{N}$, this series is finite \footnote{ A note for the interested reader: Theorem \ref{t1} implies that the series $\displaystyle \sum_{i=1}^{\infty} \beta_{1}^2(x,  10^{-j})$ is finite. See \cite{M1} on how this relates to the $\beta_{1}$-numbers, and the theorems found in \cite{DT1} that involve a Carleson condition on the $\beta_{1}$-numbers that guarantees a bi-Lipschitz parameterization of the set.}. Theorem \ref{t1} is the key point that allows us to use the bi-Lipschitz parameterization that G. David and T. Toro construct in \cite{DT1}. In fact, what they do is construct approximating $n$-planes, and prove that at any two points that are close together, the two planes associated to these points at the same scale, or at two consecutive scales are close in the Hausdorff distance sense. From there, they construct a bi-H\"older  parameterization for $M$. Then, they show that the sum of these distances at scales $ 10^{-j}$ for $j \in \mathbb{N}$ is finite (uniformly for every $x \in M$). This is what is needed for their parameterization to be bi-Lipschitz (see Theorem \ref{t2} below and the definition before it). Thus, the rest of the proof is devoted to using Theorem \ref{t1} in order to prove the compatibility conditions between the approximating planes mentioned above.\\

Note that, in the process of proving Theorem \ref{MTT'}, we find several parts of the proof very similar to the proof of the co-dimension 1 case found in \cite{M1} (see Theorem 1.5 in \cite{M1} or Theorem \ref{Th1Merhej} in this paper). In fact, most of the differences in the proof happen in Lemma \ref{l5} and Theorem \ref{t1}, with the most important difference being in the latter. The rest of the proof follows closely to the proof of co-dimension 1 case. Thus, in this paper we do as follows: first, we prove Lemma \ref{l5} and Theorem \ref{t1} and include all the details. Then, for the rest of the proof (that is introducing the David and Toro bi-Lipschitz construction, and proving the compatibility conditions between the approximating planes that allow us to use this construction), we only give an outline of the main ideas, and leave the smaller details and tedious calculations out. However, in each place where the details are omitted, we refer the reader to the parts of the proof of Theorem 1.5 in \cite{M1} where they can be found. That being said, this part of the proof of Theorem \ref{MTT'} still has enough details so that the reader understands all the steps needed to get the bi-Lipschitz parameterization of $M$, and the intuition behind them. Moreover, the way the proof is presented here includes all the information that we need from the construction of the bi-Lipschitz parameterization of $M$ to prove the corollaries that follow from Theorem \ref{MTT'}.\\

Let us begin with Lemma \ref{l5} that decodes the Carleson condition (\ref{103}).

\begin{lemma} \label{l5}
Let $M \subset B_{2}(0)$ be an $n$-Ahlfors regular rectifiable set containing the origin, and let $\mu =$  \( \mathcal{H}^{n} \mres M\) be the Hausdorff measure restricted to $M$. Let $\epsilon > 0$, and suppose that \be \label{0} \int_{0}^{1} \left(\,\dashint_{B_{r}(x)}|\pi_{T_{y}M} - A_{x,r}|^{2}  \, d \mu \right) \frac{dr}{r} < \epsilon^{2}, \quad \forall \, x \in M. \ee
Then, for every $x \in M$, we have 
\be \label{a} \sum_{k=1}^{\infty} \alpha^{2}(x, 10^{-k}) \leq C \, \epsilon^{2} ,\ee where the $\alpha$-numbers are as defined in (\ref{a'}) and $C = C(n, C_{M})$. Moreover, for every $x \in M$ and $0 < r \leq \displaystyle \frac{1}{10}$, we have
\be \label{a''} \alpha(x,r) \leq C \, \epsilon ,\ee
 where $C = C(n, C_{M})$.\end{lemma}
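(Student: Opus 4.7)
The plan is to decode the integral Carleson hypothesis (\ref{0}) into the discrete and pointwise statements in the lemma by establishing a simple quasi-monotonicity of $r \mapsto \alpha(x,r)$. Concretely, I will first prove the key comparison
\ben
\alpha(x,r_1) \le C(n,C_M)\,\alpha(x,r_2) \quad \text{whenever } r_1 \le r_2 \le 10\,r_1.
\een
The comparison rests on the fact that $A_{x,r_1}$, being the entrywise mean of $\pi_{T_yM}$ over $B_{r_1}(x)$, minimizes $B\mapsto \dashint_{B_{r_1}(x)}|\pi_{T_yM}-B|^2\,d\mu$ over constant matrices $B$ (this reduces entrywise to the elementary scalar fact that the mean minimizes $L^2$-deviation). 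Replacing $A_{x,r_1}$ by $A_{x,r_2}$ can therefore only increase $\alpha^2(x,r_1)$; enlarging the domain of integration from $B_{r_1}(x)$ to $B_{r_2}(x)\supset B_{r_1}(x)$ and invoking Ahlfors regularity (which bounds $\mu(B_{r_2}(x))/\mu(B_{r_1}(x))$ by a constant depending only on $n$ and $C_M$, since $r_2\le 10\,r_1$) then yields the comparison.

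Once this is in hand, both conclusions follow by averaging in $\log r$. For the summability (\ref{a}), apply the comparison with $r_1=r_k:=10^{-k}$ and $r_2=r\in[r_k,r_{k-1}]$ to obtain $\alpha^2(x,r_k)\le C\,\alpha^2(x,r)$ throughout that dyadic interval. Multiplying by $dr/r$ and integrating over $[r_k,r_{k-1}]$ (which has logarithmic length $\log 10$) gives
\ben
\alpha^2(x,r_k) \le \frac{C}{\log 10}\int_{r_k}^{r_{k-1}} \alpha^2(x,r)\,\frac{dr}{r},
\een
and summing over $k\ge 1$ telescopes these integrals into $\int_0^1 \alpha^2(x,r)\,\frac{dr}{r}<\epsilon^2$. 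For the pointwise bound (\ref{a''}) at scale $0<r\le 1/10$, the same comparison with $r_1=r$ and $r_2\in[r,10r]\subset(0,1]$ gives $\alpha^2(x,r)\le C\,\alpha^2(x,r_2)$; averaging this against $dr_2/r_2$ on $[r,10r]$ and using (\ref{0}) yields $\alpha^2(x,r)\le C'\epsilon^2$.

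The argument is a soft decoding of the Carleson hypothesis, and I do not anticipate any serious obstacle. The only care needed is to verify the minimization property of $A_{x,r}$ cleanly (the Frobenius inner product splits the matrix problem into a sum of independent scalar mean-minimization problems) and to track the Ahlfors-regularity constants through the measure-ratio estimate. Once the quasi-monotonicity comparison is in place, the dyadic discretization $r_k=10^{-k}$ is the natural step suggested by the form of the two conclusions, and the rest is a short calculation.
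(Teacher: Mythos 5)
Your proposal is correct and follows essentially the same route as the paper: the mean-minimization property of $A_{x,r}$ (reduced entrywise via the Frobenius norm) plus Ahlfors-regularity control on the measure ratio yields a quasi-monotonicity of $\alpha(\cdot)$ across comparable scales, which the paper carries out entrywise and defers to a citation; you then integrate in $\log r$ exactly as the paper does. The only cosmetic difference is that you prove the pointwise bound~\eqref{a''} by averaging directly against the Carleson integral~\eqref{0}, whereas the paper deduces it from the already-established summability~\eqref{a}, but both are the same two ingredients used in the same way.
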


\begin{proof}
Let $\epsilon > 0$ and suppose that (\ref{0}) holds. By the definition of the Frobenius norm, (\ref{0}) becomes 
\be \label{l11} \sum_{i,j=1}^{n+d}  \int_{0}^{1} \left(\,\dashint_{B_{r}(x)}| a_{ij}(y) - (a_{ij})_{x,r}|^{2}  \, d \mu \right) \frac{dr}{r} < \epsilon^{2}, \quad \forall \, x \in M,\ee
where $\pi_{T_{y}M} = \big(a_{ij}(y)\big)_{ij}$ and $A_{x,r} = \big((a_{ij})_{x,r}\big)_{ij}$.\\
\\
 Fix $x \in M$, and fix $i, \, j \in \{ 1, \ldots n+d\}$. For all $a \in \mathbb{R}$, and for all $0 < r_{0} \leq 1$, we have
\be \label{10} \dashint_{B_{r_{0}}(x)} |a_{ij}(y) - (a_{ij})_{x,r_{0}}|^{2} \, d \mu \leq  \,\dashint_{B_{r_{0}}(x)} |a_{ij}(y) - a|^{2} \, d \mu,\ee
since the average $(a_{ij})_{x,r_{0}}$ of $a_{ij}$ in the ball $B_{r_{0}}(x)$ minimizes the integrand on the right hand side of (\ref{10}).\\
To prove (\ref{a}), we note that
 \be \label{17} \sum_{k=1}^{\infty}  \dashint_{B_{ 10^{-k}}(x)} |a_{ij}(y) -  (a_{ij})_{x, 10^{-k}}|^{2} \, d \mu  \leq C(n,C_{M}) \, \sum_{k=0}^{\infty} \int_{ 10^{-k-1}}^{ 10^{-k}} \dashint_{B_{r}(x)} |a_{ij}(y) - (a_{ij})_{x,r}|^{2} \, d \mu \, \frac{dr}{r}.\ee

This is a straightforward computation that uses (\ref{10}) and the Ahlfors regularity of $\mu$, and is found in details in \cite{M1} (see \cite{M1}, Lemma 4.1 proof of inequality (4.6)). Moreover, it is trivial to check that 
\be \label{baa} \sum_{k=0}^{\infty} \int_{ 10^{-k-1}}^{ 10^{-k}} \dashint_{B_{r}(x)} |a_{ij}(y) - (a_{ij})_{x,r}|^{2} \, d \mu \, \frac{dr}{r} = \int_{0}^{1} \left(\,\dashint_{B_{r}(x)} |a_{ij}(y) - (a_{ij})_{x,r}|^{2} \, d \mu \right) \frac{dr}{r}.\ee

Thus, plugging (\ref{baa}) in (\ref{17}), we get 
\be \label{l120} \sum_{k=1}^{\infty} \dashint_{B_{ 10^{-k}}(x)} |a_{ij}(y) - (a_{ij})_{x, 10^{-k}}|^{2} \, d \mu \leq C(n,C_{M})\,  \int_{0}^{1} \left(\,\dashint_{B_{r}(x)} |a_{ij}(y) - (a_{ij})_{x,r}|^{2} \, d \mu \right) \frac{dr}{r}. \ee

Since (\ref{l120}) is true for every $ i, \, j \in \{ 1, \ldots n+d\}$, we can take the sum over $i$ and $j$ on both sides of (\ref{l120}), and using (\ref{a'}) and (\ref{l11}), we get
\ben \sum_{k=1}^{\infty} \alpha^{2}(x, 10^{-k}) \leq C(n,C_{M}) \, \epsilon^{2} ,\een which is exactly (\ref{a}).

To prove inequality (\ref{a''}), fix $x \in M$ and $0 < r \leq \displaystyle \frac{1}{10}$. Then, there exists $k \geq 1$ such that \be \label{111}  10^{-k-1} < r \leq  10^{-k}, \quad \txt{that is} \quad \frac{1}{ 10^{-k}} \leq \frac{1}{r} < \frac{1}{ 10^{-k-1}}. \ee

Now, fix $i, \, j \in \{ 1, \ldots n+d\}$. Using inequality (\ref{10}) for $a = (a_{ij})_{x, 10^{-k}}$ and $r_{0} = r$, (\ref{111}), and the fact that $\mu$ is Ahlfors regular, we get that
\be \label{l121}\dashint_{B_{r}(x)} |a_{ij}(y) - (a_{ij})_{x,r}|^{2} \, d \mu \leq C(n,C_{M}) \, \dashint_{B_{ 10^{-k}}(x)} |a_{ij}(y) -  (a_{ij})_{x, 10^{-k}}|^{2} \, d \mu.\ee

Summing over $i$ and $j$ on both sides of (\ref{l121}), and using the definition of the the Frobenius norm together with (\ref{a'}), we get
\be \label{baaaa} \alpha^{2}(x,r) \leq C(n,C_{M}) \, \alpha^{2}(x,  10^{-k}). \ee
Taking the square root on both sides of (\ref{baaaa}) and using (\ref{a}) finishes the proof of (\ref{a''})
\end{proof}

Next, we use the Poincar\'e inequality to get good approximating $n$-planes for $M$ at every point $x \in M$ and at every scale $0 < r< \frac{1}{10 \lambda}$. In this context, a good approximating $n$-plane at the point $x \in M$ and radius $r$, is a plane $P_{x,r}$ such that the distance (in integral form) from $M \cap B_{r}(x)$ to $P_{x,r}$ is small.  

\begin{theorem}
\label{t1} Let $M \subset B_{2}(0)$ be an $n$-Ahlfors regular rectifiable set containing the origin, and let $\mu =$  \( \mathcal{H}^{n} \mres M\) be the Hausdorff measure restricted to $M$. Assume that $M$ satisfies the Poincar\'{e}-type inequality (\ref{eqp}).
There exists an $\epsilon_{1} >0 = \epsilon_{1}(n,d,C_{M})$, that for every $0 < \epsilon \leq \epsilon_{1}$, if
 \be \label{again} \int_{0}^{1} \left(\,\dashint_{B_{r}(x)} |\pi_{T_{y}M} - A_{x,r}|^{2} \, d \mu \right) \frac{dr}{r} < \epsilon^{2}, \quad \forall x \in M,\ee
then for every $x \in M$ and $0< r \leq \displaystyle \frac{1}{10 \lambda}$, there exists an affine $n$-dimensional plane $P_{x,r}$ such that 
\be \label{121} \dashint_{B_{r}(x)} \frac{d(y,P_{x,r})}{r} \, d \mu(y) \leq C \, \alpha(x,\lambda r),\ee
where $C= C(n,d,C_{P})$.
\end{theorem}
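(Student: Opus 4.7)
The plan is to extract the approximating plane directly from the spectral structure of the symmetric matrix $A_{x,\lambda r}$, and then use the Poincar\'e inequality on linear coordinate functions transverse to that plane. Concretely, if $\alpha(x,\lambda r)$ is small, then $A_{x,\lambda r}$ is close (in Frobenius norm, hence in operator norm) to some orthogonal projection onto an $n$-plane, and we expect its spectral decomposition to carry an honest $n$-dimensional subspace $V$ (eigenspace of the large eigenvalues). Translating $V$ so that it passes through the $\mu$-centroid $z_{x,r} := \dashint_{B_{r}(x)} y \, d\mu(y)$ of $B_{r}(x)\cap M$ will yield $P_{x,r}$, and the $d(y,P_{x,r})$ bound will come from estimating $\langle y-z_{x,r},v\rangle$ on an orthonormal basis $\{v_{1},\dots,v_{d}\}$ of $V^{\perp}$ via (\ref{eqp}).

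First, I would choose $\epsilon_{1}$ so small (using $\alpha(x,\lambda r) \leq C(n,C_{M})\epsilon$ from Lemma \ref{l5}) that Chebyshev's inequality applied to $y\mapsto |\pi_{T_{y}M}-A_{x,\lambda r}|$ produces a subset of $B_{\lambda r}(x)\cap M$ of positive $\mu$-measure on which $|\pi_{T_{y}M}-A_{x,\lambda r}|\leq 2\alpha(x,\lambda r)$. Since $M$ is rectifiable, I may pick such a point $y_{0}$ at which $T_{y_{0}}M$ actually exists. Then $\|\pi_{T_{y_{0}}M}-A_{x,\lambda r}\|_{\txt{op}}\leq 2\alpha(x,\lambda r)$, and for $\epsilon_{1}$ small Lemma \ref{l4} (with $V$ replaced by $T_{y_{0}}M$ and $L=A_{x,\lambda r}$) gives $n$ eigenvalues of $A_{x,\lambda r}$ of modulus $\geq 3/4$ and $d$ eigenvalues of modulus $\leq 2(n+d)\alpha(x,\lambda r)$. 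Crucially, $A_{x,\lambda r}$ is symmetric (being an average of symmetric projections), so I can define $V$ to be the $n$-dimensional eigenspace of its large eigenvalues; then $V^{\perp}$ is the eigenspace of the small eigenvalues, and $\|A_{x,\lambda r}|_{V^{\perp}}\|_{\txt{op}}\leq 2(n+d)\alpha(x,\lambda r)$. I set $P_{x,r}:=z_{x,r}+V$.

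Next, fix an orthonormal basis $\{v_{1},\dots,v_{d}\}$ of $V^{\perp}$ and consider the $1$-Lipschitz linear functions $f_{k}(y):=\langle y,v_{k}\rangle$ on $\mathbb{R}^{n+d}$. Since $(f_{k})_{x,r}=\langle z_{x,r},v_{k}\rangle$ by linearity of the average, the pointwise bound $d(y,P_{x,r})=|\pi_{V^{\perp}}(y-z_{x,r})|\leq \sum_{k=1}^{d}|f_{k}(y)-(f_{k})_{x,r}|$ holds. Applying the Poincar\'e inequality (\ref{eqp}) with $p=2$ to each $f_{k}$, noting that $\nabla^{M}f_{k}(y)=\pi_{T_{y}M}(v_{k})$ and splitting
\[
\pi_{T_{y}M}(v_{k}) \;=\; A_{x,\lambda r}(v_{k}) \,+\, \big(\pi_{T_{y}M}-A_{x,\lambda r}\big)(v_{k}),
\]
the triangle inequality in $L^{2}(B_{\lambda r}(x),\mu)$ controls the first summand by $|A_{x,\lambda r}(v_{k})|\leq 2(n+d)\alpha(x,\lambda r)$ and the second summand by $\alpha(x,\lambda r)$ (directly from the definition of the Frobenius norm and of $\alpha$). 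Summing over $k$ and dividing by $r$ yields (\ref{121}) with $C=C(n,d,C_{P})$.

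The main obstacle is the spectral step in the second paragraph: it is essential that $A_{x,\lambda r}$ acts with small operator norm on $V^{\perp}$, which requires both the symmetry of $A_{x,\lambda r}$ (so that the large- and small-eigenvalue eigenspaces are orthogonal and $V^{\perp}$ really is the small-eigenvalue eigenspace) and Lemma \ref{l4} applied to an anchor projection $\pi_{T_{y_{0}}M}$ found by Chebyshev. Once this is in place, the Poincar\'e estimate reduces the problem to the routine splitting above.
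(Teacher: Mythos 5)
Your proposal is correct and follows essentially the same route as the paper: use Lemma \ref{l5} to make $\alpha(x,\lambda r)$ small, find a point $y_{0}$ (by an averaging/Chebyshev argument) where $\pi_{T_{y_{0}}M}$ is close to $A_{x,\lambda r}$, invoke Lemma \ref{l4} plus the symmetry of $A_{x,\lambda r}$ to extract a clean splitting of $\mathbb{R}^{n+d}$ into an $n$-dimensional large-eigenvalue space $V$ and its orthogonal $d$-dimensional small-eigenvalue complement, anchor $P_{x,r}$ at the centroid, and then bound the transverse coordinates via the Poincar\'e inequality applied to the linear functionals $\langle\,\cdot\,,v_k\rangle$. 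The only cosmetic deviations are that the paper picks $y_{0}$ so that $|\pi_{T_{y_0}M}-A_{x,\lambda r}|\leq \alpha(x,\lambda r)$ (rather than $2\alpha$), and the paper bounds $|A_{x,\lambda r}v_k|$ directly by the $k$-th small eigenvalue whereas you invoke the operator norm of $A_{x,\lambda r}|_{V^\perp}$ — both give the same estimate.
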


\begin{proof}
Fix $x \in M$ and $r \leq \displaystyle \frac{1}{10 \lambda}$. Let $\epsilon \leq \epsilon_{1}$ (with $\epsilon_{1}$ to be determined later) such that (\ref{again}) is satisfied. By (\ref{a'}), (\ref{a''}) from Lemma \ref{l5}, and the fact that $\lambda r \leq \displaystyle \frac{1}{10}$, we have 
\be \label{t11}  \dashint_{B_{\lambda r}(x)} |\pi_{T_{y}M} - A_{x,\lambda r}|^{2} \, d \mu = \alpha^{2} (x, \lambda r) \leq C(n,C_{M}) \,\epsilon^{2}.\ee
From (\ref{t11}) and the fact that $M$ is rectifiable (so approximate tangent planes exist $\mu$-a.e.), it is easy to check that there exists $y_{0} \in B_{\lambda r}(x) \cap M$ such that $T_{y_{0}}M$ exists, and  
\ben |\pi_{T_{y_{0}}M} - A_{x,\lambda r}| \leq \alpha(x,\lambda r) \leq C_{1} \, \epsilon,\een
where $C_{1}$ is a (fixed) constant depending only on $n$ and $C_{M}$. Comparing the operator norm with the Frobenius norm (the operator norm is at most the Frobenius norm), we get
\be \label{t13} ||\pi_{T_{y_{0}}M} - A_{x,\lambda r}|| \leq \alpha(x,\lambda r) \leq C_{1} \, \epsilon \leq C_{1} \epsilon_{1}.\ee

\noindent Let $\delta_{0}$ be the constant from Lemma \ref{l4}, and choose $\epsilon_{1} \leq \displaystyle\frac{\delta_{0}}{C_{1}}$. Then, (\ref{t13}) becomes 
\ben ||\pi_{T_{y_{0}}M} - A_{x,\lambda r}|| \leq \alpha(x,\lambda r) \leq \delta_{0}, \een and by Lemma \ref{l4} (with $\delta = \alpha(x,\lambda r)$, $V = T_{y_{0}}M$, and $L = A_{x,\lambda r}$), we deduce that $A_{x,\lambda r}$ has exactly $n$ eigenvalues such that $\lambda^{1}_{x,\lambda r}, \ldots, \lambda^{n}_{x,\lambda r}$ such that 
$ |\lambda^{i}_{x,\lambda r}| \geq 1 - c \, \alpha(x,\lambda r)$, for all $i \in \{ 1, \ldots , n\}$, and exactly $d$ eigenvalues $\lambda^{n+1}_{x,\lambda r}, \ldots, \lambda^{n+d}_{x,\lambda r}$ such that 
\be \label{eigen} |\lambda^{i}_{x,\lambda r}| \leq C(n,d) \, \alpha(x,\lambda r) \quad \forall \, i \in \{ n+1, \ldots , n+d\} .\ee

Since $A_{x,\lambda r}$ is a real symmetric matrix, $n+d$ eigenvectors of the matrix $A_{x,\lambda r}$, say $v^{1}_{x,\lambda r} , \ldots v^{n+d}_{x,\lambda r}$ (each corresponding to exactly one of the $n+d$ eigenvalues mentioned above) can be chosen to be orthonormal. Thus, $v^{1}_{x,\lambda r} , \ldots v^{n+d}_{x,\lambda r}$ are unit, linearly independent vectors such that
\be \label{value} A_{x,\lambda r}v^{i}_{x,\lambda r} = \lambda^{i}_{x,\lambda r}v^{i}_{x,\lambda r} \quad \forall \, i \in \{ 1, \ldots n+d\} .\ee
\\

\noindent Let us now fix our attention to the last $d$ eigenvector and eigenvalues. For $i \in \{ n+1 , \ldots n+d \}$ and consider the function $f_{i}$ on $\mathbb{R}^{n+d}$ defined by 
\ben f_{i}(y) = \left<y, v^{i}_{x,\lambda r}\right>, \,\,\,\,\,\,  y \in \mathbb{R}^{n+d}.\een
Notice that $f_{i}$ is a smooth function on $\mathbb{R}^{n+d}$, and for every point $y \in M$ where the tangent plane $T_{y}M$ exists, (which, again, is almost everywhere in $M$), we have \be \label{1}|\nabla^{M}f_{i}(y)| \leq  | \pi_{T_{y}M} - A_{x,\lambda r}| + |\lambda^{i}_{x,\lambda r}| . \ee 

In fact,
\ben \nabla^{M}f_{i}(y) = \pi_{T_{y}M} \big(\nabla f(y)\big) = \pi_{T_{y}M}(v^{i}_{x,\lambda r}) = (\pi_{T_{y}M} - A_{x,\lambda r})(v^{i}_{x,\lambda r}) + A_{x,\lambda r}v^{i}_{x,\lambda r}. \een
Thus, using the definition of the operator norm, the fact that $v^{i}_{x,\lambda r}$ is unit, (\ref{value}), and the fact that the operator norm of a matrix is at most its Frobenius norm we get 
\besn | \nabla^{M}f_{i}(y)| &\leq& |\pi_{T_{y}M} - A_{x,\lambda r})(v^{i}_{x,r})| + |A_{x,\lambda r}v^{i}_{x,\lambda r}| \\ &\leq&  ||\pi_{T_{y}M} - A_{x,\lambda r}|| + |\lambda^{i}_{x,\lambda r}| \leq |\pi_{T_{y}M} - A_{x,\lambda r}| + |\lambda^{i}_{x,\lambda r}| . \eesn

\noindent Now, applying the Poincar\'{e} inequality to the function $f_{i}$ and the ball $B_{r}(x)$, and using (\ref{1}), we get 
\be \label{2} \begin{split} \frac{1}{r} \, \dashint_{B_{r}(x)}  \left| \left<y,v^{i}_{x,\lambda r}\right> - \dashint_{B_{r}(x)} \left<z ,v^{i}_{x,\lambda r}\right> \right. & \left. d \mu(z) \right. \bigg| d \mu(y) \\ &\leq C_{P} \left(\, \dashint_{B_{\lambda r}(x)} \left( |\pi_{T_{y}M} - A_{x,\lambda r}| + |\lambda^{i}_{x,\lambda r}| \right)^{2} d \mu(y)\right)^{\frac{1}{2}}. \end{split} \ee

But $v^{i}_{x, \lambda r}$ is a constant vector, so (\ref{2}) can be rewritten as
\be \label{3} \begin{split} \frac{1}{r} \,\dashint_{B_{r}(x)}  \left| \left<y,v^{i}_{x,\lambda r}\right> - \left< \dashint_{B_{r}(x)} z d \mu(z) \right. \right. &,\left. \left. v^{i}_{x,\lambda r} \right. \bigg> \right| d \mu(y) \\ &\leq C_{P} \left( \, \dashint_{B_{\lambda r}(x)} \left( |\pi_{T_{y}M} - A_{x,\lambda r}| + |\lambda^{i}_{x,\lambda r}| \right)^{2} d \mu(y)\right)^{\frac{1}{2}}, \end{split} \ee

that is,
\be \label{4} \begin{split} \frac{1}{r} \, \dashint_{B_{r}(x)} \left|\left<y - \, \dashint_{B_{r}(x)} z \, d\mu(z) ,v^{i}_{x,\lambda r}\right> \right| &d \mu(y) \\ &\leq C_{P} \left( \,\dashint_{B_{\lambda r}(x)} \left( |\pi_{T_{y}M} - A_{x,\lambda r}| + |\lambda^{i}_{x,\lambda r}| \right)^{2} d \mu(y)\right)^{\frac{1}{2}} \\ 
&\leq  C(C_{P}) \, \left(  \left( \dashint_{B_{\lambda r}(x)}  |\pi_{T_{y}M} - A_{x,\lambda r}|^{2}\right)^{\frac{1}{2}} + |\lambda^{i}_{x,\lambda r}| \right). \end{split} \ee

\noindent Using (\ref{eigen}) and (\ref{a'}), (\ref{4}) becomes 
\be \label{t15} \frac{1}{r} \left(\, \dashint_{B_{r}(x)} \left|\left<y - \, \dashint_{B_{r}(x)} z \, d\mu(z) ,v^{i}_{x,\lambda r}\right> \right| d \mu(y) \right) \leq C(n,d,C_{P}) \,  \left( \dashint_{B_{\lambda r}(x)}  |\pi_{T_{y}M} - A_{x,\lambda r}|^{2}\right)^{\frac{1}{2}}.  \ee

\noindent Since (\ref{t15}) is true for every $ i \in \{n+1, \ldots , n+d \}$, we can take the sum over $i$ on both sides of (\ref{t15}) to get 

\be \label{t16} \frac{1}{r} \sum_{i=n+1}^{n+d} \dashint_{B_{r}(x)} \left|\left<y - \dashint_{B_{r}(x)} z d\mu(z) ,v^{i}_{x,\lambda r}\right> \right| d \mu(y) \leq C(n,d,C_{P})  \left( \dashint_{B_{\lambda r}(x)}  |\pi_{T_{y}M} - A_{x,\lambda r}|^{2}\right)^{\frac{1}{2}}. \ee

\noindent We are now ready to choose our plane $P_{x,r}$. Take $P_{x,r}$ to be the $n$-plane passing through the point $ c_{x,r} := \,\dashint_{B_{r}(x)} z \, d \mu(z) $, the centre of mass of $\mu$ in the ball $B_{r}(x)$, and such that $P_{x,r} - c = \txt{span} \{ v^{1}_{x,\lambda r}, \ldots , v^{n}_{x,\lambda r} \}$. In other words, $(P_{x,r} - c_{x,r})^{\perp} = \txt{span} \{ v^{n+1}_{x,\lambda r}, \ldots , v^{n+d}_{x,\lambda r} \}$. Here $(P_{x,r} - c_{x,r})^{\perp}$ denotes the $d$-plane of $\mathbb{R}^{n+d}$ perpendicular to the $n$-plane $P_{x,r} - c_{x,r}$. \\
\\

\noindent For $y \in B_{r}(x)$, we have that
\be \label{5} d(y, P_{x,r}) = d(y - c_{x,r}, P_{x,r} - c_{x,r}) = \left| \sum_{i=n+1}^{n+d} \left< y - c_{x,r} , v^{i}_{x,\lambda r}\right> v^{i}_{x,\lambda r}\right| \leq \sum_{i=n+1}^{n+d} \left|  \left< y - c_{x,r} , v^{i}_{x,\lambda r} \right>\right|
\ee

\noindent Dividing by $r$ and taking the average over $B_{r}(x)$ on both sides of (\ref{5}), and using the definition of $c_{x,r}$, we get 

\besn  \dashint_{B_{r}(x)} \frac{d(y, P_{x,r})}{r} \, d \mu(y)  &\leq & \, \frac{1}{r} \sum_{i=n+1}^{n+d} \dashint_{B_{r}(x)}  \left|  \left< y - \,\dashint_{B_{r}(x)} z \, d \mu(z)  , v^{i}_{x,\lambda r} \right>\right|\, d \mu(y) \\
& \leq &  C(n,d,C_{P}) \, \left(\, \dashint_{B_{\lambda r}(x)} \left| \pi_{T_{y}M} - A_{x,\lambda r} \right|^{2} d \mu\right)^{\frac{1}{2}},   \eesn
where the last inequality comes from (\ref{t16}).\\

\noindent Thus, by the definition of $\alpha(x,\lambda r)$ (see (\ref{a'})), we get (\ref{121}) and the proof is done.
\end{proof}

As mentioned earlier, we want to use the construction of the bi-Lipschitz map given by David and Toro in their paper \cite{DT1}. To do that, we introduce the notion of a \textbf{coherent collection of balls and planes}. Here, we follow the steps given by David and Toro (see \cite{DT1}, chapter 2). \\

First, let $l_{0} \in \mathbb{N}$ such that $10^{l_{0}} \leq \lambda \leq 10^{l_{0}+1}$, and set $r_{k} = 10^{-k-l_{0} - 5}$ for $ k \in \mathbb{N}$, and let $\epsilon$ be a small number (will be chosen later) that depends only on $n$ and $d$. Choose a collection $\{x_{jk} \}, \, \, j \in J_{k}$ of points in $\mathbb{R}^{n+d}$, so that

\be \label{59} |x_{jk} - x_{ik}| \geq r_{k} \quad \txt{for}\,\,\, i,j \in J_{k}, \, i \neq j. \ee

Set $B_{jk} := B_{r_{k}}(x_{jk})$ and $V_{k}^{\lambda} := \displaystyle \bigcup_{j \in J_{k}} \lambda B_{jk} =  \displaystyle \bigcup_{j \in J_{k}}  B_{\lambda r_{k}}(x_{jk}),\,$ for $\lambda > 1$.\\

We also ask for our collection  $\{x_{jk} \}, \, \, j \in J_{k}$ and $k \geq 1$ to satisfy
\be \label{60} x_{jk} \in V_{k-1}^{2} \quad \txt{for}\,\,\, k \geq 1 \,\,\, \txt{and} \,\,\, j \in J_{k}. \ee

Suppose that our initial net $\{x_{j0} \}$ is close to an $n$-dimensional plane $\Sigma_{0}$, that is
\be \label{101} d(x_{j0},\Sigma_{0}) \leq \epsilon \quad \forall\, j \in J_{0}.\ee

For each $k \geq 0$ and $j \in J_{k}$, suppose you have an $n$-dimensional plane $P_{jk}$, passing through $x_{jk}$ such that the following compatibility conditions hold:\\

\be \label{102} d_{x_{i0},100r_{0}}(P_{i0}, \Sigma_{0}) \leq \epsilon \,\,\,\, \txt{for} \,\, i \in J_{0}, \ee

\be \label{74} d_{x_{ik},100r_{k}}(P_{ik},P_{jk}) \leq \epsilon \,\,\,\, \txt{for}\, k\geq 0 \,\,\,\txt{and} \,\,\, i,j \in J_{k} \,\,\, \txt{such that} \,\,\, |x_{ik} - x_{jk}| \leq 100r_{k},\ee

and

\be \label{75} d_{x_{ik},20r_{k}}(P_{ik},P_{j,k+1}) \leq \epsilon \,\,\, \txt{for}\, k\geq 0 \,\,\,\txt{and} \,\,\, i \in J_{k},\,j \in J_{k+1} \,\, \txt{such that} \,\,\, |x_{ik} - x_{j,k+1}| \leq 2r_{k}.\ee

We can now define a \textbf{coherent collection of balls and planes}:

\begin{definition}
A \textbf{coherent collection of balls and planes}, (in short a CCBP), is a triple $(\Sigma_{0}, \{B_{jk} \}, \{P_{jk}\})$ where the properties (\ref{59}) up to (\ref{75}) above are satisfied, with a prescribed $\epsilon$ that is small enough, and depends only on $n$ and $d$.
\end{definition}

\begin{theorem} \label{t2}  (see Theorems 2.4 in \cite{DT1})
There exists $\epsilon_{2} > 0$ depending only on $n$ and $d$, such that the following holds: If $\epsilon \leq \epsilon_{2}$, and $(\Sigma_{0}, \{B_{jk} \}, \{P_{jk}\})$ is a CCBP (with $\epsilon$), then there exists a bijection $g: \mathbb{R}^{n+d} \rightarrow \mathbb{R}^{n+d}$ with the following properties:
\be g(z)= z \quad \txt{when} \,\,\, d(z, \Sigma_{0}) \geq 2, \ee
and
\be |g(z)-z| \leq C^{'}_{0} \epsilon \quad \txt{for} \,\,\, z \in \mathbb{R}^{n+d}, \ee
where $C^{'}_{0}= C^{'}_{0}(n,d)$. Moreover, $g(\Sigma_{0})$ is a $C^{'}_{0} \epsilon$-Reifenberg flat set that contains the accumulation set 
\besn E_{\infty} = &\{x&  \in \mathbb{R}^{n+d}; \,\, x\,\, \txt{can be written as} \\ &x& = \lim_{m \to \infty} x_{j(m),k(m)}, \,\, \txt{with}\,\,k(m) \in \mathbb{N}, \\ &\txt{and}& \,\, j(m) \in J_{k_{m}} \,\, \txt{for}\,\, m \geq 0 \,\, \txt{and} \,\, \lim_{m \to \infty} k(m) = \infty\} .\eesn
\end{theorem}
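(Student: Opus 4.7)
The plan is to construct $g$ as a uniform limit of compositions $g_k = \sigma_k \circ g_{k-1}$ starting from $g_{-1} = \txt{Id}$, where each $\sigma_k$ is a diffeomorphism supported near $V_k^{10}$ that ``retilts'' the previous approximation so that on each ball $B_{jk}$ it tracks the plane $P_{jk}$ rather than the planes $P_{\cdot,k-1}$. Choose a smooth partition of unity $\{\theta_{jk}\}_{j\in J_k}$ with $\theta_{jk}\equiv 1$ on $2B_{jk}$, $\txt{supp}\,\theta_{jk}\subset 10B_{jk}$, and $|\nabla\theta_{jk}|\leq C r_k^{-1}$; the separation condition (\ref{59}) guarantees bounded overlap. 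A prototype definition is
\[
\sigma_k(z)= z + \sum_{j\in J_k}\theta_{jk}(z)\bigl(\pi_{P_{jk}}-\pi_{P_{j,k-1}^{\star}}\bigr)(z - x_{jk}),
\]
where $P_{j,k-1}^{\star}$ is a plane at scale $k-1$ associated with $x_{jk}$ through (\ref{60}); variants correcting only the normal component also work.

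Two distinct roles are played by the CCBP axioms. Axiom (\ref{74}) at a fixed scale says the competing planes $P_{ik}, P_{jk}$ agree on their common neighborhood up to $C\epsilon r_k$, so the blending of different summands in $\sigma_k$ produces no first-order ambiguity. Axiom (\ref{75}) across consecutive scales yields the pointwise bound $\|\sigma_k - \txt{Id}\|_{\infty}\leq C\epsilon r_k$, and the gradient estimate on $\theta_{jk}$ together with bounded overlap then gives $\|D\sigma_k - I\|\leq C\epsilon$. Thus for $\epsilon$ smaller than some $\epsilon_2=\epsilon_2(n,d)$, each $\sigma_k$ is a $(1+C\epsilon)$-bi-Lipschitz $C^1$ diffeomorphism of $\mathbb{R}^{n+d}$ equal to the identity outside a neighborhood of $V_k$. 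Since the displacements $\|g_k - g_{k-1}\|_{\infty}\leq C\epsilon r_k$ are geometrically summable in $k$, the sequence $g_k$ converges uniformly to a continuous map $g$ satisfying $\|g - \txt{Id}\|_{\infty}\leq C_0'\epsilon$ and $g = \txt{Id}$ outside a $2$-neighborhood of $\Sigma_0$.

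To conclude, set $\Sigma_k = g_k(\Sigma_0)$. By induction the construction forces $\Sigma_k$ to be $C\epsilon$-close, in normalized Hausdorff distance on $B_{jk}$, to the plane $P_{jk}$, and this closeness survives in the limit because all later $\sigma_{k'}$, $k'>k$, move points by at most $C\epsilon r_{k'}$. For any $y\in g(\Sigma_0)$ and any $r\in(r_{k+1},r_k]$, picking a net point $x_{jk}$ near $g^{-1}(y)$ (which exists by (\ref{60}) and the density of $\{x_{jk}\}$ at scale $r_k$) yields an approximating plane, transported by $g$, within normalized Hausdorff distance $C_0'\epsilon$ of $g(\Sigma_0)$ on $B_r(y)$, giving Reifenberg flatness. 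The containment $E_\infty\subset g(\Sigma_0)$ follows since every $x_{jk}$ lies within $C\epsilon r_k$ of $\Sigma_k$, hence its limit belongs to the limit surface. The main obstacle I expect is bijectivity: while each $\sigma_k$ is globally bi-Lipschitz, an infinite composition could in principle accumulate folds. The resolution is that the region where $\sigma_k$ acts nontrivially localizes to an $O(r_k)$-neighborhood of the evolving surface $\Sigma_{k-1}$, so points far from $\Sigma_0$ are fixed after finitely many steps; near $\Sigma_0$ the uniform $C^1$-estimate $\|D\sigma_k - I\|\leq C\epsilon$ prevents folding. This is the heart of the David--Toro argument, and the reason one obtains here only displacement and Reifenberg flatness (a genuine bi-Lipschitz upgrade requires summability of $\alpha(x,r)^2$ across scales, which is precisely what Lemma \ref{l5} and Theorem \ref{t1} deliver in the setting of Theorem \ref{MTT'}).
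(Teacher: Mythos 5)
The paper does not prove Theorem \ref{t2}: it is imported verbatim from David and Toro \cite{DT1} (their Theorem 2.4), and the text only records the shape of their construction in display (\ref{sigmas}), namely $\sigma_k(y)= y + \sum_{j\in J_k}\theta_{jk}(y)[\pi_{jk}(y)-y]$, together with the compositions $f_k=\sigma_{k-1}\circ\cdots\circ\sigma_0$ and the extension step that produces $g$. So there is no internal proof to match against, but your sketch does deviate from the cited construction in one substantive way, and it glosses over a genuine subtlety.

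First, your prototype
$\sigma_k(z)= z + \sum_{j}\theta_{jk}(z)\bigl(\pi_{P_{jk}}-\pi_{P_{j,k-1}^{\star}}\bigr)(z - x_{jk})$
is a ``retilt'' that corrects the previous scale's approximation, and it forces you to carry along a coherent choice of reference plane $P_{j,k-1}^{\star}$ for each $j\in J_k$. David and Toro instead blend orthogonal projections onto the \emph{current} planes, $\sigma_k(y)=y+\sum_j\theta_{jk}(y)[\pi_{jk}(y)-y]$: this is memoryless, and the smallness $|\sigma_k(y)-y|\lesssim\epsilon r_k$ near the evolving surface follows directly from (\ref{74})--(\ref{75}) without any bookkeeping of a chain of reference planes. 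Your formula also only makes sense with the affine rather than the linear reading of the projections, since for parallel-but-offset planes the difference of linear projections vanishes while the surface must still translate; the David--Toro form avoids this issue entirely.

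Second, the bijectivity discussion is too optimistic. The bound $\|D\sigma_k-I\|\leq C\epsilon$ gives each $\sigma_k$ bi-Lipschitz constant $1+C\epsilon$, so the finite composition $f_k$ only inherits $(1+C\epsilon)^k$, which diverges as $k\to\infty$; ``the uniform $C^1$ estimate prevents folding'' is therefore not by itself a proof of injectivity for the limit map. What David--Toro actually prove for an arbitrary CCBP is a \emph{bi-H\"older} estimate for $f$, obtained by noting that points at separation of order $r_k$ are essentially determined after step $k$ (later maps $\sigma_{k'}$ move them by at most $C\epsilon r_{k'}\ll r_k$), and it is the bi-H\"older lower bound that gives injectivity of $g$, with surjectivity coming from $g=\mathrm{Id}$ outside a compact set. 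The bi-Lipschitz upgrade under the summability condition (\ref{89}) is the separate refinement of Corollary \ref{cr1}, which is what Lemma \ref{l5} and Theorem \ref{t1} feed in the proof of Theorem \ref{MTT'}; your closing sentence conflates these two stages somewhat.
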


In \cite{DT1}, David and Toro give a sufficient condition for $g$ to be bi-Lipschitz that we want to use in our proof. To state this condition, we need some technical details from the construction of the map $g$ from Theorem \ref{t2}. So, let us briefly discuss the construction here: David and Toro defined a mapping $f$ whose goal is to push a small neighborhood of $\Sigma_{0}$ towards a final set, which they proved to be Reifenberg flat. They obtained $f$ as a limit of the composed functions $f_{k} = \sigma_{k-1} \circ \ldots \sigma_{0}$
where each $\sigma_{k}$ is a smooth function that moves points near the planes $P_{jk}$ at the scale $r_{k}$. More precisely, 
\be \label{sigmas} \sigma_{k} (y) = y + \sum_{j \in J_{k}} \theta_{jk}(y)[\pi_{jk}(y)-y],\ee
where $\{\theta_{jk}\}_{j \in J_{k}, k\geq 0}$ is a partition of unity with each $\theta_{jk}$ supported on $10B_{jk}$, and $\pi_{jk}$ denotes the orthogonal projection from $\mathbb{R}^{n+d}$ onto the plane $P_{jk}$.\\

\noindent Since $f$ in their construction was defined on $\Sigma_{0}$, $g$ was defined to be the extension of $f$ on the whole space.\\
 
\begin{corollary} \label{cr1} (see Proposition 11.2 in \cite{DT1}) Suppose we are in the setting of Theorem \ref{t2}. Define the quantity
\be  \label{nasty} \begin{split} \epsilon^{'}_{k}(y) &= \\ &sup\{d_{x_{im},100 r_{m}}(P_{jk},P_{im}); \,\,\, j \in J_{k}, \,\, i \in J_{m},\,\,\, m \in \{k,k-1\}, \,\,\txt{and}\,\, y \in 10B_{jk} \cap 11B_{im} \} \end{split} \ee
for $k \geq 1 \,\, \txt{and} \,\,y \in V_{k}^{10}$, and $\epsilon_{k}^{'}(y)=0 \,\, \txt{when}\,\, y \in \mathbb{R}^{n+d} \setminus V_{k}^{10}$ (when there are no pairs $(j,k)$ as above). 
If there exists $N > 0$ such that
 \be \label{89} \sum_{k=0}^{\infty} \epsilon^{'}_{k}(f_{k}(z))^{2} < N,  \ee
 then the map $g$ constructed in Theorem \ref{t2} is $K$-bi-Lipschitz, where the bi-Lipschitz constant $K = K(n,d,N)$.\end{corollary}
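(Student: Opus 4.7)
The plan is to show that the map $g$ from Theorem~\ref{t2} is bi-Lipschitz by obtaining uniform two-sided control on the derivatives of $f_k = \sigma_{k-1}\circ\cdots\circ\sigma_0$, so that the limit $f = \lim_k f_k$ (and hence its extension $g$) has bounded Jacobian in every direction. Bi-H\"older regularity is already in hand from Theorem~\ref{t2}; what needs to be upgraded is a first-order analysis of each $\sigma_k$ in which the $L^2$ summability (\ref{89}) is the exactly right gain.

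Differentiating the formula (\ref{sigmas}), and writing $L_{jk}$ for the orthogonal projection onto the linear part of $P_{jk}$ (so $D\pi_{jk}\equiv L_{jk}$ since $\pi_{jk}$ is affine), I would obtain
\begin{equation*}
D\sigma_k(y) \;=\; I \;+\; \sum_{j\in J_k}\nabla\theta_{jk}(y)\otimes\bigl(\pi_{jk}(y)-y\bigr) \;+\; \sum_{j\in J_k}\theta_{jk}(y)\bigl(L_{jk}-I\bigr).
\end{equation*}
The partition-of-unity bounds $|\nabla\theta_{jk}|\lesssim r_k^{-1}$ on $10B_{jk}$, combined with $|\pi_{jk}(y)-y|\lesssim d(y,P_{jk})\lesssim r_k\epsilon'_k(y)$ coming from the compatibility (\ref{75}), bound the first sum in operator norm by $C\epsilon'_k(y)$. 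For the second sum, the differences $L_{jk}-L_{i,k-1}$ are themselves of size $\epsilon'_k(y)$ by the very definition (\ref{nasty}) of $\epsilon'_k$, so $D\sigma_k(y)-L_{i,k-1}$ and in particular $D\sigma_k(y)-I$ are both controlled in norm by $C\epsilon'_k(y)$.

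The heart of the argument is a quadratic-cancellation estimate. Apply $D\sigma_k(f_k(z))$ to a unit vector $v_k$ which is (essentially) tangent to the plane $P_{i,k-1}$ associated to the orbit point $f_k(z)$, and write $D\sigma_k(f_k(z))v_k = v_k + w_k$. The vector $(L_{jk}-I)v_k$ lies in the orthogonal complement of $P_{jk}$, and since $P_{jk}$ differs from $P_{i,k-1}$ by at most $\epsilon'_k$ in direction, it is perpendicular to $v_k$ up to an $O(\epsilon'_k)$ tilt; similarly $\pi_{jk}(y)-y$ is in the normal to $P_{jk}$. Hence both first-order contributions to $w_k$ are nearly orthogonal to $v_k$, giving $\langle v_k,w_k\rangle = O(\epsilon'_k(f_k(z))^2)$ and
\begin{equation*}
\bigl|\log|D\sigma_k(f_k(z))v_k|\bigr| \;=\; \tfrac12\bigl|\log\bigl(1+2\langle v_k,w_k\rangle+|w_k|^2\bigr)\bigr| \;\leq\; C\,\epsilon'_k(f_k(z))^2.
\end{equation*}
Bootstrapping with (\ref{74}) shows that $v_{k+1}:=D\sigma_k(f_k(z))v_k/|D\sigma_k(f_k(z))v_k|$ stays approximately tangent to $P_{j,k}$, so the same estimate applies at every scale. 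Telescoping and invoking (\ref{89}) gives $\bigl|\log|Df(z)\cdot v_0|\bigr|\leq C\sum_{k\geq 0}\epsilon'_k(f_k(z))^2\leq CN$, whence $e^{-CN}\leq|Df(z)v_0|\leq e^{CN}$ for every unit $v_0$. The extension of $f$ to $g$ on $\mathbb{R}^{n+d}$ is the same partition-of-unity recipe (with $g=\mathrm{id}$ far from $\Sigma_0$), so the estimate carries over and yields $K=K(n,d,N)$.

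The main obstacle I expect is making the orthogonality claim for $w_k$ genuinely rigorous: one must simultaneously control the tilt of $v_k$ away from $P_{jk}$ and the tilt of $P_{jk}$ away from $P_{i,k-1}$, and the bookkeeping uses both compatibility conditions (\ref{74}) and (\ref{75}) in an inductive loop. Without that cancellation one only gets $|\log|Df||\lesssim\sum\epsilon'_k(f_k(z))$, which would demand $L^1$ summability rather than the $L^2$ summability actually assumed.
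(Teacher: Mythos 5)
The paper does not supply its own proof of this statement: Corollary~\ref{cr1} is cited verbatim from Proposition~11.2 of David--Toro \cite{DT1}, and the present paper invokes it as a black box. So the relevant comparison is against David--Toro's argument, not against anything in this manuscript.

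Your sketch does identify the single most important mechanism in that argument, namely the quadratic gain: because $L_{jk}-I$ is minus an orthogonal projection, $\langle v,(I-L_{jk})v\rangle=|(I-L_{jk})v|^{2}$ for any $v$, so a first-order misalignment of size $\epsilon'_k$ produces only a second-order change in $|D\sigma_k v|$. That is precisely why $\ell^{2}$ summability of $\epsilon'_k(f_k(z))$, rather than $\ell^{1}$, is the correct hypothesis. However, several steps in the sketch are either stated incorrectly or left unaddressed. First, the bound you assert for the gradient sum, $|\pi_{jk}(y)-y|\lesssim d(y,P_{jk})\lesssim r_k\epsilon'_k(y)$ ``coming from~(\ref{75}),'' does not follow from the compatibility conditions, which compare two \emph{planes}, not a point and a plane; the estimate $d(f_k(z),P_{jk})$ that David--Toro actually prove is of order $\epsilon\,r_k$, not $\epsilon'_k(f_k(z))\,r_k$. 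The correct way to see that the term $\sum_j\nabla\theta_{jk}(y)\otimes(\pi_{jk}(y)-y)$ is $O(\epsilon'_k(y))$ uses the partition-of-unity identity $\sum_j\nabla\theta_{jk}\equiv 0$ to replace $\pi_{jk}(y)-y$ by $\pi_{jk}(y)-\pi_{j_0k}(y)$ for a fixed reference index $j_0$, and then bounds that difference by $r_k\epsilon'_k(y)$ via~(\ref{74}); without this subtraction you only get $O(\epsilon)$. Second, your telescoping argument controls only (nearly) tangential vectors $v_0$; the bi-Lipschitz bound for $g$ on $\mathbb{R}^{n+d}$ requires a separate, and not entirely parallel, estimate in the directions normal to $\Sigma_0$, which David--Toro carry out in their Sections~10 and~12. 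Third, the inductive claim that $v_{k+1}$ stays approximately tangent to $P_{jk}$ needs a quantitative statement and a uniform-in-$k$ bound, which in turn feeds back into the $O(\epsilon'_k{}^2)$ estimate; as you note this is the hardest bookkeeping in the whole argument, and it is exactly the content of David--Toro's Sections~7--9. In short: the strategy is the right one and not an alternative route, but as written the proposal has concrete gaps at the level of the partition-of-unity cancellation, the normal-direction estimates, and the inductive tangency control, all of which are essential and none of which are routine.
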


We are finally ready to prove Theorem \ref{MTT'}.

\textbf{\underline{\textit{Proof of Theorem \ref{MTT'}:}}}

\begin{proof}
As mentioned before, from here on, the proof of this theorem is essentially the same as that of its co-dimension 1 analogue found in \cite{M1} (Theorem 1.5 in \cite{M1}). In fact, the essential differences in the proofs of Theorem \ref{MTT'} and its co-dimension 1 analogue took place in Lemma \ref{l5} and Theorem \ref{t1}. Thus, we continue this proof by outlining the main ideas and referring the reader to the proof of Theorem 1.5 in \cite{M1} for a more detailed proof.\\

Let $\epsilon_{0} > 0$ (to be determined later), and suppose that (\ref{103}) holds. Let $\epsilon_{2}$ be the constant from Theorem \ref{t2}. We would like to apply Theorem \ref{t2} for $\epsilon = \epsilon_{2}$, and then Corollary \ref{cr1}. So our first goal is to construct a CCBP, and we do that in several steps:\\
 Let us start with a collection $\{\tilde{x}_{jk}\},\, j \in J_{k}$ of points in $M \cap B_{\frac{1}{10^{l_{0} +4}}}(0)$ that is maximal under the constraint \be \label{62}|\tilde{x}_{jk} - \tilde{x}_{ik}| \geq \displaystyle\frac {4r_{k}}{3} \quad \txt{when}\,\, i,j \in J_{k}\,\,\, \txt{and}\,\,\,i \neq j.\ee
Of course, we can arrange matters so that the point $0$ belongs to our initial maximal set, at scale $r_{0}$. Thus, $0 = \tilde{x}_{i_{0},0} $ for some $i_{0} \in J_{0}$. Notice that for every $k \geq 0$, we have 
\be \label{63} M  \cap B_{\frac{1}{10^{l_{0} +4}}}(0)\subset \displaystyle \bigcup_{j \in J_{k}}\bar{B}_{\frac{4r_{k}}{3}}(\tilde{x}_{jk}).\ee
\\
Later, we choose \be \label{61} x_{jk} \in M \cap B_{ \frac{r_{k}}{6}}(\tilde{x}_{jk}), \quad j \in J_{k} .\ee

By (\ref{63}) and (\ref{61}), we can see

\be \label{63'} M  \cap B_{\frac{1}{10^{l_{0} +4}}}(0)\subset \displaystyle \bigcup_{j \in J_{k}}\bar{B}_{\frac{4r_{k}}{3}}(\tilde{x}_{jk}) \subset \displaystyle \bigcup_{j \in J_{k}}B_{\frac{3r_{k}}{2}}(x_{jk}) .\ee

 Using (\ref{62}), (\ref{61}), and (\ref{63'}), it is easy to see that the collection $\{x_{jk} \},\,\,\, j \in J_{k}$ satisfies (\ref{59}) and (\ref{60}). (for details, see \cite{M1}, page 23).\\

Next, we choose our planes $P_{jk}$ and our collection $\{ x_{jk} \}$, for $k \geq 0$ and $ j \in J_{k}$. Fix $k\geq 0$ and $j \in J_{k}$. Let $\epsilon_{1}$ be the constant from Theorem \ref{t1}. For
\be \label{c} \epsilon_{0} \leq \epsilon_{1}, \ee
we apply Theorem \ref{t1} to the point $\tilde{x}_{jk}$ (by construction  $\tilde{x}_{jk} \in M$) and radius $120r_{k}$ (notice that $120\,r_{k} \leq \frac{1}{10 \lambda}$) to get an $n$-plane $P_{\tilde{x}_{jk},120r_{k}}$, denoted in this proof by $ P^{'}_{jk}$ for simplicity reasons, such that 
 
\be \label{66} \dashint_{B_{120r_{k}}(\tilde{x}_{jk})} \frac{d(y, P^{'}_{jk} )}{120r_{k}} \, d \mu  \leq  \, C(n,d,C_{P}) \, \alpha(\tilde{x}_{jk}, 120 \lambda r_{k}). \ee

Thus, by (\ref{66}) and the fact that $\mu$ is Ahlfors regular, there exists $x_{jk} \in M \cap B_{ \frac{r_{k}}{6}}(\tilde{x}_{jk})$ such that 

\bes \label{68} d(x_{jk},P^{'}_{jk}) &\leq& \dashint_{B_{ \frac{r_{k}}{6}}(\tilde{x}_{jk})}d(y, P^{'}_{jk} ) \, d \mu \nonumber \\ &\leq&  C(n,C_{M}) \, \dashint_{B_{120r_{k}}(\tilde{x}_{jk})} d(y, P^{'}_{jk} ) \, d \mu \leq \, C(n,d,C_{M},C_{P}) \, \alpha(\tilde{x}_{jk}, 120 \lambda r_{k}) \,r_{k}. \ees

Let $P_{jk}$ be the plane parallel to $P^{'}_{jk}$ and passing through $x_{jk}$. From (\ref{66}), (\ref{68}) and the fact that the two planes are parallel, we see that (see \cite{M1} p. 24)

\be \label{73} \dashint_{ B_{120r_{k}}(\tilde{x}_{jk})} \frac{d(y,P_{jk})}{120r_{k}} \, d \mu \leq  C(n,d,C_{M},C_{P})\,  \alpha(\tilde{x}_{jk}, 120 \lambda r_{k}). \ee

To summarize what we did so far, we have chosen $n$-dimensional planes $P_{jk}$ for $k\geq 0$ and $j \in J_{k}$ where each $P_{jk}$ passes through $x_{jk}$, and satisfies (\ref{73}). Notice that (\ref{73}) shows that $P_{jk}$ is a good approximating plane to $M$ in the ball $B_{120r_{k}}(\tilde{x}_{jk})$.\\ 

We want to get our CCBP with $\epsilon_{2}$. Thus, we show that (\ref{101}), (\ref{102}), (\ref{74}), and (\ref{75}) hold with $\epsilon = \epsilon_{2}$. Since the proofs of these inequalities are the same as the proofs of their analogue inequalities in the co-dimension 1 case, we only outline their proofs here (see \cite{M1} p. 25-- p. 31 for a detailed proof of the inequalities).\\

\textbf{\textit{Outline of the proofs for (\ref{74}) and (\ref{75}):}} 

Inequalities  (\ref{74}) and (\ref{75}) can be proved simultaneously. Fix $k \geq 0$ and $j \in J_{k}$; let $m \in \{k, k-1 \}$ and $i \in J_m$ such that $|x_{jk} - x_{im}| \leq 100r_{m}$. We want to show that $P_{jk}$ and $P_{im}$ are close together. To do that, we construct $n$ linearly independent vectors that ``effectively'' span $P_{jk}$, (that is, these vectors span $P_{jk}$, and are far away from each other in a uniform quantitative manner), and that are close to $P_{im}$. More precisely, using Lemma \ref{l1} inductively, together with (\ref{73}), we can prove the following claim:\\

 \textbf{\textit{Claim 1:}} 
Denote by  $\pi_{jk}$ is the orthogonal projection of $\mathbb{R}^{n+d}$ on the plane $P_{jk}$. Let $r = c_{0} \, r_{k}$, where $c_{0} \leq \frac{1}{2}$ is the constant from Lemma \ref{l1} depending only on $n$, $d$, and $C_{M}$. There exists $C_{1} = C_{1}(n,d,C_{M},C_{P})$, such that if $C_{1} \epsilon_{0} \leq 1$, then there exists a sequence of $n+1$ balls $\{B_{r}(y_{l})\}_{l=0}^{n}$,  such that 
\begin{enumerate}
\item $\forall \, l \in \{ 0, \ldots n\}$, we have $y_{l} \in M$ and $B_{r}(y_{l}) \subset B_{2r_{k}}(\tilde{x}_{jk}).$
\item  $q_{1} - q_{0} \notin B_{5r}(0)$, and $\forall \,  l \in \{ 2, \ldots n\}$, we have $q_{l} - q_{0} \notin N_{5r}\big(span \{q_{1} - q_{0}, \ldots, q_{l-1} - q_{0}  \}\big),$
\end{enumerate}
where $q_{l} = \pi_{jk}(p(y_{l}))$ and $p(y_{l}) = \dashint_{B_{r}(y_{l})}z \, d\mu(z)$ is the centre of mass of $\mu$ in the ball $B_{r}(y_{l})$.\\

Now, on one hand, notice that 
\be \label{78} P_{jk} - q_{0} = span  \{q_{1} - q_{0} , \ldots, q_{n} - q_{0}  \}.\ee

On the other hand, by the definition of $p(y_{l})$, Jensen's inequality applied on the convex function $\phi(.) = d(.,P_{jk})$, the fact that $\mu$ is Ahlfors regular, $B_{r}(y_{l}) \subset B_{2r_{k}}(\tilde{x}_{jk})$, $r = c_{0}\, r_{k}$, and (\ref{73}), we have that 

\be \label{33} d\big(p(y_{l}),P_{jk}\big)
\leq  C(n,d, C_{M}, C_{P}) \, \alpha(\tilde{x}_{jk},120 \lambda  \,r_{k}) \, r_{k}, \quad \forall  \, l \in \{ 0, \ldots n\}.\ee

Similarly, we have that 
\be \label{33again} d\big(p(y_{l}),P_{im}\big)
\leq  C(n,d, C_{M}, C_{P}) \, \alpha(\tilde{x}_{im},120 \lambda  \,r_{m}) \, r_{m},  \quad \forall \,  l \in \{ 0, \ldots n\}. \ee

Thus, combining (\ref{33}) and (\ref{33again}), we directly get

\be \label{80} d\big(q_{l},P_{im}\big) \leq C(n,d,C_{M},C_{P}) \, \big( \alpha(\tilde{x}_{jk},120 \lambda r_{k})\,r_{k} + \alpha(\tilde{x}_{im}, 120 \lambda r_{m}) \, r_{m}\big),  \quad \forall  \, l \in \{ 0, \ldots n\}.
\ee

To compute the distance between $P_{jk}$ and $P_{im}$, let $y \in P_{jk} \cap B_{\rho}(x_{im})$ where $\rho= \{20r_m, 100r_{m}\}$. By (\ref{78}), $y$ can be written uniquely as
\be \label{82} y  = q_{0} + \sum_{l=1}^{n} \beta_{l}(q_{l} - q_{0}).\ee 

Using Lemma \ref{l3}, for $u_{l} = q_{l} - q_{0}$, $R = r$, and $v = y - q_{0}$ to get an upper bound on the $\beta_{l}$'s that show up in (\ref{82}), together with (\ref{80}), we get that 

\ben  d\big(y, P_{im}\big) \leq C(n,d,C_{M},C_{P}) \bigg( \alpha(\tilde{x}_{jk}, 120\lambda r_{k})\,r_{k} + \alpha(\tilde{x}_{im}, 120 \lambda r_{m}) \, r_{m}\bigg)
\een 

Thus, 
\be \label{88}d_{x_{im}, \rho} (P_{jk}, P_{im}) \leq  c \, \bigg( \alpha(\tilde{x}_{jk},120 \lambda r_{k}) + \alpha(\tilde{x}_{im},120 \lambda r_{m})\bigg)\,\,\,\,\, \rho \in\{20r_m, 100r_{m}\}. \ee

Now, by Lemma \ref{l5}, we know that
$ \alpha(\tilde{x}_{jk},120 \lambda r_{k}) \leq C(n,C_{M}) \, \epsilon_{0}$, and $\alpha(\tilde{x}_{im},120 \lambda r_{m}) \leq C(n,C_{M}) \, \epsilon_{0}$. Thus, (\ref{88}) becomes 
\be \label{135} d_{x_{im}, \rho} (P_{jk}, P_{im}) \leq  C(n,d,C_{M},C_{P}) \epsilon_{0} \,\,\,\,\, \rho \in\{20r_m, 100r_{m}\}. \ee

So, we have shown that there exist two constants $C_{2}$ and $C_{3}$, each depending only on $n$, $d$, $C_{M}$, and $C_{P}$, such that
\be \label{74'} d_{x_{ik},100r_{k}}(P_{ik},P_{jk}) \leq C_{2} \,\epsilon_{0} \,\,\,\, \txt{for}\, k\geq 0 \,\,\,\txt{and} \,\,\, i,j \in J_{k} \,\,\, \txt{such that} \,\,\, |x_{ik} - x_{jk}| \leq 100r_{k},\ee

and

\be \label{75'} d_{x_{ik},20r_{k}}(P_{ik},P_{j,k+1}) \leq C_{3} \,\epsilon_{0} \,\,\, \txt{for}\, k\geq 0 \,\,\,\txt{and} \,\,\, i \in J_{k},\,j \in J_{k+1} \,\, \txt{such that} \,\,\, |x_{ik} - x_{j,k+1}| \leq 2r_{k}.\ee

For 
\be \label{c'''} C_{2} \,\epsilon_{0} \leq \epsilon_{2} \quad \txt{and} \quad C_{3} \,\epsilon_{0} \leq \epsilon_{2}, \ee
we get (\ref{74}) and (\ref{75}). \\

\textbf{\textit{Outline of the proofs for (\ref{101}) and (\ref{102}):}} 

We start with (\ref{102}). Recall that $0 = \tilde{x}_{i_{0},0}$ for some $i_{0} \in J_{0}$. Choose $\Sigma_{0}$ to be the plane $P_{i_{0},0}$ described above (recall that $P_{i_{0},0}$ passes through $x_{i_{0},0}$, where $r_{0} = 10^{-l_{0} - 5}$). Then, what we need to show is 

\be \label{102'} d_{x_{j0},100r_{0}}(P_{j0}, P_{i_{0},0}) \leq \epsilon_{2} \quad \txt{for} \,\, j \in J_{0}. \ee

Fix $j \in J_{0}$, and take the corresponding $x_{j0}$. Since by construction $|\tilde{x}_{j0}| < \displaystyle \frac{1}{10^{l_{0}+4}}$ and since (\ref{61}) says that $ |x_{j_{0},0} -  \tilde{x}_{j_{0},0}| \leq \displaystyle \frac{r_{0}}{6}$, then, we have
\be \label{1''}|x_{j0}| \leq \frac{r_{0}}{6} + \frac{1}{10^{l_{0}+4}},\,\,\,\,\,\, j \in J_{0}.\ee
Moreover, by (\ref{61}) and the fact that $0 = \tilde{x}_{i_{0},0}$ , we have
\be \label{2''} |x_{i_{0},0} -  \tilde{x}_{i_{0},0}| = |x_{i_{0},0}| \leq \frac{r_{0}}{6}.\ee
Combining (\ref{1''}) and (\ref{2''}), and using the fact that $r_{0} = 10^{-l_{0}-4}$ we get
\be |x_{j0} - x_{i_{0},0} | \leq \frac{r_{0}}{6} +  \frac{1}{10^{l_{0}+4}} + \frac{r_{0}}{6} \leq \frac{r_{0}}{6} +  10 r_{0} + \frac{r_{0}}{6} \leq 100r_{0}. \ee
Thus, by (\ref{74}) for $x_{ik} = x_{j0}$, $P_{ik} = P_{j0}$, and $P_{jk} = P_{i_{0},0}$, we get exactly (\ref{102'}), hence finishing the proof for (\ref{102}).\\

It remains to show (\ref{101}) with $\epsilon = \epsilon_{2}$,  that is 
\be \label{101'} d(x_{j0}, P_{i_{0},0}) \leq \epsilon_{2} ,\quad \txt{for}\,\, j \in J_{0}. \ee
However, notice that since $x_{j0} \in P_{j0}$, (\ref{101}) follows directly from (\ref{102}).\\

We finally have our CCBP. Now, by the proof of Theorem \ref{t2} (see paragraph above (\ref{sigmas})) we get the smooth maps $\sigma_{k} \,\, \txt{and} \,\, f_{k} = \sigma_{k-1} \circ \ldots \sigma_{0} \,\, \txt{for} \,\, k \geq 0$, and then the map $f = \displaystyle \lim_{k \to \infty} f_{k}$ defined on $\Sigma_{0}$, and finally the map $g$ that we want.
 
Moreover, by Theorem \ref{t2}, we know that $g: \mathbb{R}^{n+d} \rightarrow \mathbb{R}^{n+d}$ is a bijection with the following properties:
\be \label{aa1} g(z)= z \,\,\, \,\,\, \txt{when} \,\,\, d(z, \Sigma_{0}) \geq 2, \ee
\be \label{bb1} |g(z)-z| \leq C^{'}_{0} \epsilon_{2} \,\,\,\,\,\,\, \txt{for} \,\,\, z \in \mathbb{R}^{n+d}, \ee
and \be \label{cc1} g(\Sigma_{0}) \,\, \txt{is a } \,\,\, C^{'}_{0} \epsilon_{2} \txt{-Reifenberg flat set}. \ee 

Fix $\epsilon_{0}$ such that (\ref{c}), (\ref{c'''}), and the hypothesis of Claim 1 are all satisfied. Notice that by the choice of $\epsilon_{0}$, we can write 
 $\epsilon_{0} = c_{4}\, \epsilon_{2}$, 
where $c_{4} = c_{4}(n,d,C_{M},C_{P})$. Hence, from (\ref{aa1}), (\ref{bb1}), (\ref{cc1}), we directly get (\ref{aa}), (\ref{bb}), and (\ref{cc}).
\\

Next, we show that 
\be \label{12''} M \cap B_{\frac{1}{10^{l_{0}+4}}}(0) \subset  g(\Sigma_{0}). \ee Fix $x \in  M \cap B_{\frac{1}{10^{l_{0}+4}}}(0) $. Then, by (\ref{63'}), we see that for all $k \geq 0$, there exists a point $x_{jk}$ such that $|x - x_{jk}| \leq \displaystyle \frac{3r_{k}}{2}$, and hence $x \in E_{\infty} \subset g(\Sigma_{0})$ ($E_{\infty}$ is the set defined in Theorem \ref{t2}). Since $x$ was an arbitrary point in $M \cap B_{\frac{1}{10^{l_{0}+4}}}(0)$, (\ref{12''}) is proved. This shows that (\ref{contained}) holds for $\theta_{0} := \frac{1}{10^{l_{0}+4}}$.\\

We still need to show that $g$ is bi-Lipschitz. By Corollary \ref{cr1}, it suffices to show (\ref{89}). To do that, we need the following inequality from \cite{DT1} (see inequality (6.8) page 27 in \cite{DT1} 

\be \label{in} |f(z) - f_{k}(z)| \leq C(n,d) \epsilon_{2} \, r_{k} \quad \txt{for}\,\, k \geq 0\,\, \txt{and} \,\, z \in \Sigma_{0}.\ee

Let $z \in \Sigma_{0}$, and choose $\bar{z} \in M \cap B_{\frac{1}{10^{l_{0}+4}}}(0)$ such that 
\be \label{90} |\bar{z} - f(z)| \leq 2 \, d(f(z),M \cap B_{\frac{1}{10^{l_{0}+4}}}(0)). \ee
Fix $k \geq 0$, and consider the index $m  \in \{k,k-1\}$ and the indices $j \in J_{k}$ and $i \in J_{m}$ such that $f_{k}(z) \in 10B_{jk} \cap 11B_{im}$. We show that
\be \label{91}  d_{x_{im},100 r_{m}}(P_{jk},P_{im}) \leq C(n,d,C_{M},C_{P}) \, \alpha(\bar{z},r_{k-l_{0}-5}) \quad \txt{for} \,\, k \geq 1. \ee

In fact, by (\ref{90}) and (\ref{in}), and since $\tilde{x}_{jk} \in M \cap B_{\frac{1}{10^{l_{0}+4}}}(0)$, $|\tilde{x}_{jk} - x_{jk}| \leq \displaystyle  \frac{r_{k}}{6}$, and $f_{k}(z) \in 10B_{jk}$, one can show that (see \cite{M1} p. 32-33 for detailed proof)

 \be \label{96} B_{120 \lambda r_{m}}(\tilde{x}_{im}) \cup  B_{120 \lambda r_{k}}(\tilde{x}_{jk}) \subset B_{r_{k-l_{0}-5}}(\bar{z}). \ee

Now, writing $\pi_{T_{y}M} = \big(a_{pq}(y)\big)_{pq}$, and using the definition of the Frobenius norm, together with (\ref{10}) for $ a = (a_{pq})_{\bar{z},r_{k-l_{0}-5}}$, (\ref{96}), and the fact that $\mu$ is Ahlfors regular

\besn \alpha^{2}(\tilde{x}_{jk}, 120 \lambda r_{k}) &=& \dashint_{B_{120 \lambda r_{k}}(\tilde{x}_{jk})} |\pi_{T_{y}M} - A_{\tilde{x}_{jk}, 120 \lambda r_{k}}|^{2}\, d \mu\\ 
&=&  \sum_{p,q = 1} ^{n+d} \dashint_{B_{120 \lambda r_{k}}(\tilde{x}_{jk})}  |a_{pq}(y) - (a_{pq})_{\tilde{x}_{jk}, 120 \lambda r_{k}}|^{2}\, d \mu \\
&\leq&  \sum_{p,q = 1} ^{n+d} \dashint_{B_{120 \lambda r_{k}}(\tilde{x}_{jk})}  |a_{pq}(y) - (a_{pq})_{\bar{z},r_{k-l_{0} -5}}|^{2}\, d \mu \\
&\leq& C(n,C_{M}) \sum_{p,q = 1} ^{n+d} \, \dashint_{B_{r_{k-l_{0}-5}}(\bar{z})}  |a_{pq}(y) - (a_{pq})_{\bar{z},r_{k-l_{0} -5}}|^{2}\, d \mu \\
&=& C(n,C_{M}) \dashint_{B_{r_{k-l_{0}-5}}(\bar{z})} |\pi_{T_{y}M} - A_{\bar{z},r_{k-l_{0}-5}}|^{2}\, d \mu \\
&=& C(n,C_{M})\, \alpha^{2}(\bar{z},r_{k-l_{0}-5}),
\eesn 
and thus,
\be \label{97}  \alpha(\tilde{x}_{jk}, 120 \lambda r_{k})  \leq C(n,C_{M})\,  \alpha(\bar{z},r_{k-l_{0}-5}). \ee

Similarly, we can show that 
\be \label{98}  \alpha(\tilde{x}_{im}, 120 \lambda r_{m})  \leq C(n,C_{M})\,  \alpha(\bar{z},r_{k-l_{0}-5}). \ee

Plugging (\ref{97}) and (\ref{98}) in (\ref{88}) for $\rho = 100r_{m}$, we get
\be \label{99}  d_{x_{im},100 r_{m}}(P_{jk},P_{im}) \leq C(n,d,C_{M},C_{P}) \, \alpha(\bar{z},r_{k-l_{0}-5}), \quad \forall k \geq 1. \ee

This finishes the proof of (\ref{91}).\\
\\
Hence, we have shown that 
$\epsilon^{'}_{k}(f_{k}(z)) \leq C(n,d,C_{M},C_{P}) \,  \alpha(\bar{z},r_{k-l_{0}-5})$ for every $k\geq 1$, that is
\be \label{136}  \epsilon^{'}_{k}(f_{k}(z))^{2} \leq C(n,d,C_{M},C_{P}) \,  \alpha^{2}(\bar{z},r_{k-l_{0}-5}), \quad \forall \, k\geq 1\ee
Summing both sides of (\ref{136}) over $k \geq 0$, and using (\ref{a}) in Lemma \ref{l5} together with the fact that $\bar{z} \in M\cap B_{\frac{1}{10^{l_{0}+4}}}(0)$, we get
\be \label{100} \sum_{k=0}^{\infty} \epsilon^{'}_{k}(f_{k}(z))^{2} \leq 1 + C(n,d,C_{M},C_{P}) \, \sum_{k=1}^{\infty} \alpha^{2}(\bar{z},r_{k-l_{0}-5}) \leq 1 + C(n,d,C_{M},C_{P}) \, \epsilon^{2}_{0} \,\, := N.
\ee
Inequality (\ref{89}) is proved, and our theorem follows.

\end{proof}

\vspace{0.5cm}
As mentioned in the introduction, in the special case when $M$ has co-dimension 1, (\ref{103}) translates a Carleson-type condition on the oscillation of the unit normals to $M$.\\

\textbf{\textit{Proof that Theorem \ref{Th1Merhej} follows from Theorem \ref{MTT'}}}

\begin{proof}
Suppose that (\ref{103old}) holds for some choice of unit normal $\nu$ to $M$. We show that (\ref{103old}) is in fact exactly inequality (\ref{103}). Fix $x \in M$ and $ 0 < r < 1$ and let $y \in M \cap B_{r}(x)$ be a point where the approximate tangent plane $T_{y}M$ (and thus the unit normal $\nu(y)$) exists. Denote by $ {T_{y}M}^{\perp} $ the subspace perpendicular to $T_{y}M$. Then, using the matrix representation of $ \pi_{T_{y}M} $ in the standard basis of $\mathbb{R}^{n+1}$, and the fact that $ \pi_{{T_{y}M}^{\perp}} = Id_{n+1} - \pi_{T_{y}M}$ where $Id_{n+1}$ is the $(n+1) \times (n+1)$ identity matrix,  one can easily see that
\be \label{r1} |\pi_{T_{y}M} - A_{x,r}|^{2} = |\pi_{{T_{y}M}^{\perp}}- B_{x,r}|^{2},  \ee 
where $ \pi_{{T_{y}M}^{\perp}} = \big ( b_{ij}(y) \big)_{ij}$ and $B_{x,r} = Id_{n+d} - A_{x,r} =  \big( (b_{ij})_{x,r} \big)_{ij}$.\\

Now, we want to express the right hand side of (\ref{r1}) using a different basis than the standard basis of $\mathbb{R}^{n+1}$. For any choice of orthonormal basis $\{ \nu_{1}(y), \ldots \nu_{n}(y) \}$ of $T_{y}M$, we have that $\{ \nu_{1}(y), \ldots , \nu_{n}(y), \nu(y)\}$ is an orthonormal basis for $\mathbb{R}^{n+1}$. The matrix representation of $ \pi_{{T_{y}M}^{\perp}}$ with $\{ \nu_{1}(y), \ldots , \nu_{n}(y), \nu(y)\}$ as a basis for the domain $\mathbb{R}^{n+1}$ and the standard basis for the range $\mathbb{R}^{n+1}$, is the $(n+1) \times (n+1)$ matrix whose last column is $\nu(y)$ while the other columns are all zero. Thus, with this choice of bases and matrix representations, $B_{x,r}$ becomes the matrix whose last column is $\nu_{x,r}$ while the other column are all zero  \footnote{ Note that considering this choice of bases and matrix representations is only valid in co-dimension 1, as otherwise $B_{x,r}$ will not be well defined. This is because in higher co-dimensions, one will have infinitely many choices for the unit normals that span the normal plane, instead of the one choice (modulo direction) in co-dimension 1.}. Hence, using (\ref{r1}), we get that
\be \label{cor1}  |\pi_{T_{y}M} - A_{x,r}|^{2} =  |\pi_{{T_{y}M}^{\perp}}- B_{x,r}|^{2} = | \nu(y) - \nu_{x,r}|^{2}. \ee
Since (\ref{cor1}) is true for any $y \in B_{r}(x)$, and since $x$ and $r$ are arbitrary, then, 
\ben \begin{split} \sup_{x \in M \cap B_{\frac{1}{10^{4}}}(0)} \,\, \int_{0}^{1} \left(\, \dashint_{B_{r}(x)} |\nu(y) - \nu_{x,r}|^{2} d \mu \right) & \frac{dr}{r} = \\ &\sup_{x \in M \cap B_{\frac{1}{10^{4}}}(0)} \,\, \int_{0}^{1} \left(\,\dashint_{B_{r}(x)}  |\pi_{T_{y}M} - A_{x,r}|^{2} d \mu \right) \frac{dr}{r} , \end{split} \een
and the proof is done
\end{proof}

We now show that if we assume, in addition to the hypothesis of Theorem \ref{MTT'}, that $M$ is Reifenberg flat, then (locally)  $M$ is exactly the bi-Lipschitz image of an $n$-plane. In other words, the containment in (\ref{contained}) becomes an equality. 

\begin{corollary} \label{MTT'RF}
Let $M \subset B_{2}(0)$ be an $n$-Ahlfors regular rectifiable set containing the origin, and let $\mu =$  \( \mathcal{H}^{n} \mres M\) be the Hausdorff measure restricted to $M$. Assume that $M$ satisfies the Poincar\'{e}-type inequality (\ref{eqp}). There exist $\epsilon_{3} = \epsilon_{3}(n,d,C_{M},C_{P})>0$, and $\theta_{1} = \theta_{1}(\lambda)$ such that if  (\ref{103}) is satisfied with $\epsilon_{3}$ instead of $\epsilon_{0}$, and if for every $x \in M$ and $r < 1$ there is an $n$-plane $Q_{x,r}$, passing through $x$ such that 
\be \label{rf1} d(y, Q_{x,r}) \leq \epsilon_{3} \, r \quad \forall \, y \in M \cap B_{10r}(x) \ee
and 
\be \label{rf2} d(y, M) \leq \epsilon_{3} \, r \quad \forall \, y \in Q_{x,r} \cap B_{10r}(x), \ee
then there exists an onto $K$-bi-Lipschitz map $g: \mathbb{R}^{n+d} \rightarrow \mathbb{R}^{n+d}$ where the bi-Lipschitz constant $K=K(n,d,C_{M},C_{P})$ and an $n$-dimensional plane $\Sigma_{0}$, such that (\ref{aa}) holds, (\ref{bb}) holds with $\epsilon_{3}$ instead of $\epsilon_{0}$, and with $C_{0}'' = C_{0}''(n,d,C_{M},C_{P})$ instead of $C_{0}$, and
\be \label{containedrf} M \cap B_{\theta_{1}}(0) =  g(\Sigma_{0}) \cap  B_{\theta_{1}}(0). \ee
\end{corollary}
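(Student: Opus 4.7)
Since Theorem \ref{MTT'} already provides an onto $K$-bi-Lipschitz map $g$ and an $n$-plane $\Sigma_{0}$ satisfying (\ref{aa}), (\ref{bb}), (\ref{cc}), and $M \cap B_{\theta_{0}}(0) \subset g(\Sigma_{0})$, the inclusion $\subset$ of (\ref{containedrf}) holds for any $\theta_{1} \leq \theta_{0}$. My task is therefore to produce $\epsilon_{3} \leq \epsilon_{0}$ and a radius $\theta_{1} = \theta_{1}(\lambda)$ such that the reverse inclusion $g(\Sigma_{0}) \cap B_{\theta_{1}}(0) \subset M$ also holds.

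The plan is to exploit the fact that, under the added Reifenberg flatness assumption (\ref{rf1})--(\ref{rf2}), both $M$ and $g(\Sigma_{0})$ are topological $n$-manifolds near the origin. For $g(\Sigma_{0})$ this is immediate: $g|_{\Sigma_{0}} \colon \Sigma_{0} \to g(\Sigma_{0})$ is a bi-Lipschitz homeomorphism of an $n$-plane onto its image. For $M$, choosing $\epsilon_{3}$ below the threshold of Reifenberg's classical topological disk theorem \cite{Re} turns (\ref{rf1})--(\ref{rf2}) into a bi-H\"older parameterization of a neighborhood of any $x \in M$ within $M$ by an $n$-disk, so $M$ is locally a topological $n$-manifold. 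The same theorem applied to the $C_{0}''\epsilon_{3}$-Reifenberg flat set $g(\Sigma_{0})$ furnished by (\ref{cc}) gives a local bi-H\"older parameterization of $g(\Sigma_{0})$ near $0$.

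Having both sets as topological $n$-manifolds in $\mathbb{R}^{n+d}$ with $M \cap B_{\theta_{0}}(0) \subset g(\Sigma_{0})$, I would then invoke Brouwer's invariance of domain on the inclusion $M \cap B_{\theta_{0}}(0) \hookrightarrow g(\Sigma_{0})$: this is a continuous injection between topological $n$-manifolds, so its image is relatively open in $g(\Sigma_{0})$. Because $M$ is closed in $\mathbb{R}^{n+d}$ (Ahlfors regular sets are closed by the paper's definition), this image is also relatively closed in $g(\Sigma_{0}) \cap B_{\theta_{0}}(0)$. Consequently, if $\theta_{1} \leq \theta_{0}$ is chosen so that $g(\Sigma_{0}) \cap B_{\theta_{1}}(0)$ is a connected subset of $g(\Sigma_{0}) \cap B_{\theta_{0}}(0)$ containing $0$, then $M \cap g(\Sigma_{0}) \cap B_{\theta_{1}}(0)$ is nonempty and both open and closed in this connected set, hence must equal the whole set; this is exactly (\ref{containedrf}).

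The main obstacle I anticipate is this last connectedness requirement: producing an explicit $\theta_{1} = \theta_{1}(\lambda)$ for which $g(\Sigma_{0}) \cap B_{\theta_{1}}(0)$ is connected and contained in $B_{\theta_{0}}(0)$. I would settle this using the bi-H\"older parameterization of $g(\Sigma_{0})$ near $0$ obtained above: sufficiently small Euclidean-ball sections of $g(\Sigma_{0})$ around $0$ are connected because the parameterization is near-isometric when $\epsilon_{3}$ is small. Equivalently, via the homeomorphism $g|_{\Sigma_{0}}$, $g(\Sigma_{0}) \cap B_{\theta_{1}}(0)$ corresponds to $\Sigma_{0} \cap g^{-1}(B_{\theta_{1}}(0))$, which by (\ref{bb}) and the $K$-bi-Lipschitzness of $g$ is sandwiched between the concentric $n$-disks $\Sigma_{0} \cap B_{\theta_{1}/K - C_{0}''\epsilon_{3}}(g^{-1}(0))$ and $\Sigma_{0} \cap B_{K\theta_{1} + C_{0}''\epsilon_{3}}(g^{-1}(0))$ in $\Sigma_{0}$; choosing $\theta_{1}$ large compared to $K\,C_{0}''\epsilon_{3}$ but $\leq \theta_{0}/2$ allows one to verify connectedness directly. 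Either route yields a $\theta_{1}$ whose only $\lambda$-dependence is inherited from $\theta_{0}(\lambda)$, completing the proof.
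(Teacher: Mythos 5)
Your approach is genuinely different from the paper's. Where you argue topologically --- Reifenberg's topological disk theorem to make $M$ a manifold near $0$, Brouwer invariance of domain to make $M\cap B_{\theta_0}(0)$ open in $g(\Sigma_0)$, and then a connectedness (clopen) argument --- the paper instead runs a direct quantitative contradiction. Specifically, the paper first shows via Markov's inequality and Ahlfors regularity that $M\cap B_{120r_k}(\tilde x_{jk})$ lies within $\epsilon r_k$ of $P_{jk}$, then that $P_{jk}$ and the Reifenberg planes $Q_{jk}$ are Hausdorff-close, and finally, for any $w\in g(\Sigma_0)$ near $0$, assumes $d_0:=d(w,M)>0$, picks $k$ with $r_{k+1}<d_0\le r_k$, and uses the plane estimates plus $d(f_k(z),P_{jk})\lesssim\epsilon r_k$ to force $d_0\le r_{k+1}$, a contradiction. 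Both routes are legitimate; the paper's is self-contained and does not import Reifenberg's topological theorem, while yours is arguably more conceptual.

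However, your connectedness step has a genuine gap. You claim $g(\Sigma_0)\cap B_{\theta_1}(0)$ is connected because $\Sigma_0\cap g^{-1}(B_{\theta_1}(0))$ is sandwiched between two concentric disks in $\Sigma_0$ and because ``the parameterization is near-isometric when $\epsilon_3$ is small.'' Neither assertion settles the matter. A set sandwiched between two concentric $n$-disks is not thereby connected (it may consist of the inner disk together with disjoint open islands in the thin annulus), and the map $g$ produced by Theorem~\ref{t2} and Corollary~\ref{cr1} is $K$-bi-Lipschitz with $K=K(n,d,C_M,C_P)$ not tending to $1$ as $\epsilon_3\to 0$, so $g$ is not near-isometric; only $|g(z)-z|\le C_0''\epsilon_3$ is small, and that is exactly what gives the (insufficient) sandwich. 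The gap is fixable without proving this connectedness: set $z_0:=g^{-1}(0)\in\Sigma_0$ and $V:=g\big(\Sigma_0\cap B_\rho(z_0)\big)$ with $\rho=\theta_1+2C_0''\epsilon_3$; then $V$ is connected (continuous image of a disk), is open in $g(\Sigma_0)$, contains $0$, and, using $|g(z)-z|\le C_0''\epsilon_3$ and $|z_0|\le C_0''\epsilon_3$, lies in $B_{\theta_0}(0)$ once $\theta_1<\theta_0-4C_0''\epsilon_3$. Your clopen argument applied to $V$ gives $V\subset M$, and the same estimate shows $g(\Sigma_0)\cap B_{\theta_1}(0)\subset V$, which yields (\ref{containedrf}). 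You should also state explicitly that Reifenberg's theorem makes $M\cap B_{\theta_0}(0)$ a boundaryless $n$-manifold (boundary points would violate (\ref{rf2})), since that is what invariance of domain requires.
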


\begin{proof}
Let $\epsilon_{2}$ be as in Theorem \ref{t2}, and let $\epsilon_{3} \leq \epsilon \leq \epsilon_{2}$ ($\epsilon_{3}$ and $\epsilon$ to be determined later).  Going through the exact same steps as in the proof of Theorem \ref{MTT'}, but with $\epsilon$ instead of $\epsilon_{2}$, and $\epsilon_{3}$ instead of $\epsilon_{0}$, we get a bijective map $g: \mathbb{R}^{n+d} \rightarrow \mathbb{R}^{n+d}$ such that (\ref{aa}) holds,
\be \label{bb1rf} |g(z)-z| \leq C^{'}_{0} \epsilon \quad, \txt{for} \,\,\, z \in \mathbb{R}^{n+d}, \ee
and \be \label{cc1rf} M \cap B_{\frac{1}{10^{l_{0}+4}}}(0) \subset g(\Sigma_{0}).  \ee 

Note that we have not fixed $\epsilon_{3}$ and $\epsilon$ yet. However, we know that the above holds for $\epsilon_{3} \leq \epsilon \leq \epsilon_{2}$ while inequality (\ref{c}) is satisfied with $\epsilon_{3}$ instead of $\epsilon_{0}$, (\ref{c'''}) is satisfied with $\epsilon$ instead of $\epsilon_{2}$ and $\epsilon_{3}$ instead of $\epsilon_{0}$, and the hypothesis of  Claim 1 is satisfied with $\epsilon_{3}$ instead of $\epsilon_{0}$. Now, we want to show that 
\be \label{rf3} g(\Sigma_{0}) \cap B_{\frac{1}{10^{l_{0}+8}}}(0) \subset M.  \ee
We first show that for every $k \geq 0$ and for every $j \in J_{k}$, $M \cap B_{120 r_{k}}(\tilde{x}_{jk})$ is close to $P_{jk}$ and that the $n$-planes $P_{jk}$ and $Q_{jk} := Q_{x_{jk}, r_{k}}$ are close to each other (in the Hausdorff distance sense). Let us begin by showing that for every $k \geq 0$ and for every $j \in J_{k}$,

\be \label{rf4} d(z, P_{jk}) \leq \epsilon \, r_{k} \quad \forall \, z \in M \cap B_{120 r_{k}}(\tilde{x}_{jk}).  \ee

By Markov's inequality, we know that
\ben \begin{split} \label{3''} \mu \bigg( x \in B_{120r_{k}}(\tilde{x}_{jk}); \frac{d(x,P_{jk})}{120r_{k}}\geq \alpha^{\frac{1}{2}} (\tilde{x}_{jk}, 120 \lambda r_{k}) & \bigg) \leq \\ &\frac{1}{ \alpha^{\frac{1}{2}}(\tilde{x}_{jk}, 120 \lambda r_{k})} \int_{B_{120r_{k}}(\tilde{x}_{jk})} \frac{d(y,P_{jk})}{120r_{k}} \, d \mu \end{split} \een

Using (\ref{73}) with the fact that $\mu$ is Ahlfors regular, and (\ref{103}) with (\ref{a''}) from Lemma \ref{l5} and the fact that $120 \lambda r_{k} \leq \frac{1}{10}$, we get 

\be \begin{split} \label{4''}   \mu \bigg( x \in B_{120r_{k}}(\tilde{x}_{jk}); \frac{d(x,P_{jk})}{120r_{k}}\geq \alpha^{\frac{1}{2}} (\tilde{x}_{jk}, 120 \lambda r_{k}) &\bigg) \leq \nonumber \\ & \frac{\mu(B_{120r_{k}}(\tilde{x}_{jk}))}{ \alpha^{\frac{1}{2}}(\tilde{x}_{jk}, 120 \lambda r_{k})} \, \dashint_{B_{120r_{k}}(\tilde{x}_{jk})} \frac{d(y,P_{jk})}{120r_{k}} \, d \mu  \nonumber \\
&\leq C(n,d, C_{M}, C_{P}) \, r_{k}^{n} \, \alpha^{\frac{1}{2}}\left(\tilde{x}_{jk},120 \lambda r_{k}\right) \nonumber \\ &\leq C(n,d, C_{M}, C_{P})\, r_{k}^{n} \, \epsilon_{3} ^{\frac{1}{2}}.
\end{split} \ee

Now, take a point $z \in M \cap B_{120r_{k}}(\tilde{x}_{jk})$. We consider two cases:\\
Either 
\be \label{5'''} \frac{d(z ,P_{jk})}{120r_{k}} \leq \alpha^{\frac{1}{2}}\left(\tilde{x}_{jk}, 120 \lambda r_{k}\right) \ee
or
\be \label{5'} \frac{d(z ,P_{jk})}{120r_{k}} > \alpha^{\frac{1}{2}}\left(\tilde{x}_{jk}, 120 \lambda r_{k}\right). \ee

In the first case, combining (\ref{5'''}) with (\ref{103}) and (\ref{a''}), we get 

\be \label{6} d(z ,P_{jk}) \leq C(n, C_{M}) \, r_{k} \, \epsilon_{3}^{\frac{1}{2}} .\ee

In case of (\ref{5'}), let $\rho$ be the biggest radius such that \ben B_{\rho}(z) \subset  \left\{ x \in B_{120r_{k}}(\tilde{x}_{jk}) ; \,\,\, \frac{d(x,P_{jk})}{120r_{k}} > \alpha^{\frac{1}{2}}\left(\tilde{x}_{jk}, 120 \lambda r_{k}\right)\right\}. \een

Now, since $z \in M$ and $\mu$ is Ahlfors regular, we get using (\ref{4''}) that

\be \label{8''} C_{M} \, \rho^{n} \leq \mu(B_{\rho}(z)) \leq C(n,d, C_{M}, C_{P}) \, r_{k}^{n} \,\epsilon_{3}^{\frac{1}{2}}. \ee

Thus, relabelling, (\ref{8''}) becomes

\be \label{9''} \rho \leq C(n, C_{M}, C_{P}) \,r_{k} \, \epsilon_{3}^{\frac{1}{2n}}.\ee

On the other hand, since $\rho$ is the biggest radius such that $B_{\rho}(z) \subset \\ \left\{ x \in B_{120r_{k}}(\tilde{x}_{jk}) ; \,\,\, \frac{d(x,P_{jk})}{120r_{k}} > \alpha^{\frac{1}{2}}\left(\tilde{x}_{jk}, 120 \lambda r_{k}\right)\right\}$ , then there exists $x_{0} \in \partial B_{\rho}(z)$ such that 
\be \label{5''} \frac{d(x_{0} ,P_{jk})}{120r_{k}} \leq \alpha^{\frac{1}{2}}\left(\tilde{x}_{jk}, 120 \lambda r_{k}\right). \ee

Thus, by (\ref{5''}), (\ref{9''}) and (\ref{103}) together with (\ref{a''}), we get

\bes \label{7''} d(z ,P_{jk}) &\leq& |z-x_{0}| +  d(x_{0} ,P_{jk}) \nonumber \\
&=& \rho +  d(x_{0} ,P_{jk}) \leq  C(n,d, C_{M}, C_{P}) \, r_{k} \, \epsilon_{3}^{\frac{1}{2n}}+ 120r_{k} \, \alpha^{\frac{1}{2}}\left(\tilde{x}_{jk}, 120 \lambda r_{k}\right) \nonumber\\
&\leq&  C(n,d,C_{M}, C_{P})  \,r_{k} \, \epsilon_{3}^{\frac{1}{2n}}. \ees

Combining (\ref{6}) and (\ref{7''}), we get that 
\be  \label{10''''}  d(z ,P_{jk}) \leq C_{5} \,r_{k} \,  \epsilon_{3}^{\frac{1}{2n}} \quad \txt{for}\,\, z \in M \cap B_{120r_{k}}(\tilde{x}_{jk}), \ee
where $C_{5}= C_{5}(n,d,C_{M},C_{P})$.
Thus, for $ C_{5} \,  \epsilon_{3}^{\frac{1}{2n}} \leq \epsilon,$
we get (\ref{rf4}) which is the desired inequality.

Now, let us show that $P_{jk}$ and $Q_{jk}$ are close together, that is 

\be \label{rf5} d_{x_{jk}, 5 r_{k}}(P_{jk}, Q_{jk}) \leq 3 \epsilon \, r_{k}.  \ee 
Since $P_{jk}$ and $Q_{jk}$ are $n$-planes, it is enough to show 
\be \label{rf6} \sup_{y \in Q_{jk} \cap B_{5r_{k}}(x_{jk})} d(y, P_{jk}) \leq 3 \epsilon \, r_{k}. \ee
Let $y \in Q_{jk} \cap B_{5r_{k}}(x_{jk})$. By (\ref{rf2}), we get that $d(y,M) \leq \epsilon_{0} r_{k}$, and thus, there exists $y' \in M$ such that $|y - y'| \leq 2 \, \epsilon_{0} \, r_{k}$. Recalling that $ x_{jk} \in M \cap B_{ \frac{r_{k}}{6}}(\tilde{x}_{jk})$ (see (\ref{61})), we get 
\ben |y' - \tilde{x}_{jk}| \leq |y' - y| + |y - x_{jk}| + |x_{jk} - \tilde{x}_{jk}| \leq 2 \epsilon_{3} \, r_{k} + 5 r_{k} + \frac{r_{k}}{6} \leq 120 r_{k},\een
that is $y' \in B_{120 r_{k}}(\tilde{x}_{jk})$. Hence, by (\ref{rf4}), we get that $d(y', P_{jk}) \leq \epsilon \,r_{k}$, and using the fact that $\epsilon_{3} \leq \epsilon$, we get
\ben d(y, P_{jk}) \leq |y - y'| + d(y', P_{jk}) \leq 3 \epsilon \, r_{k} ,\een
which finishes the proof of (\ref{rf6}) and in particular (\ref{rf5}). \\

Before starting the proof of (\ref{rf3}), let us recall a little bit how the map $g$ was defined. In the proof of Theorem  \ref{t2} (see paragraph above (\ref{sigmas})) David and Toro constructed the smooth maps $\sigma_{k} \,\, \txt{and} \,\, f_{k}$ where $f_{0} = Id$ and $f_{k} = \sigma_{k-1} \circ \ldots \sigma_{0} \,\, \txt{for} \,\, k \geq 1$, and then defined the map $f = \displaystyle \lim_{k \to \infty} f_{k}$ defined on $\Sigma_{0}$, and finally the map $g$ was the extension of $f$ to the whole space. \\

In order to prove (\ref{rf3}), we will need the following inequality from \cite{DT1} (see proposition 5.1 page 19 in \cite{DT1})

\be \label{inrf} d(f_{k}(z), P_{jk}) \leq C(n,d) \, \epsilon \, r_{k}, \quad \forall \, z \in \Sigma_{0}, \, k\geq 0 \,\,\, \txt{and}\,\,\, j \in J_{k} , \,\,\, \txt{such that} \,\,\, f_{k}(z) \in B_{5r_{k}}(x_{jk}).\ee
We are finally ready to prove (\ref{rf3}). Let $w \in g(\Sigma_{0}) \cap B_{\frac{1}{10^{l_{0}+8}}}(0)$, and let $d_{0} := d(w, M)$. We would like to prove that $d_{0}=0$ (recall that $M$ is closed by assumption). Let $z \in \Sigma_{0}$ such that $ w = g(z)$. Notice that by (\ref{in}) (with $\epsilon$ instead of $\epsilon_{2}$), the definition of $f_{0}$, and the fact that $g$ and $f$ agree on $\Sigma_{0}$, we have 
\be \label{rf7} |w - z| = |g(z)-z| = |f(z) - f_{0}(z)| \leq C(n,d) \epsilon \, r_{0}. \ee

Recalling that $\Sigma_{0} = P_{i_{0}0}$, $\tilde{x}_{i_{0}0} = 0$ , $r_{0} = \frac{1}{10^{l_{0}+5}}$, and that $ x_{jk} \in B_{ \frac{r_{k}}{6}}(\tilde{x}_{jk})$ (see (\ref{61})), we get 

\bes \label{rf8} |z - x_{i_{0}0}| &\leq& |z - w| + | w - \tilde{x}_{i_{0}0}| + |\tilde{x}_{i_{0}0} - x_{i_{0}0}| \nonumber \\ 
&\leq& C(n,d) \epsilon \, r_{0}+ \frac{1}{10^{l_{0}+8}} + \frac{r_{0}}{6} \leq C_{6} \epsilon \, r_{0} + 2 r_{0} \leq 3 r_{0}, \ees
for $\epsilon$ such that $C_{6} \epsilon \leq 1$, where $C_{6} = C_{6}(n,d)$.
 Thus, $z \in P_{i_{0}0} \cap B_{5 r_{0}}(x_{i_{0}0})$, and by (\ref{rf5}), there is a point $z' \in Q_{i_{0}0}$ such that $ |z - z'| \leq 6 \epsilon \, r_{0}$.  Moreover, 
\be \label{rf10}  |z' - x_{i_{0}0}| \leq |z' - z| + |z - x_{i_{0}0}| \leq  6 \epsilon \, r_{0} + 3 r_{0} \leq 10 r_{0}, \ee
for $\epsilon < 1$. Thus, $z' \in Q_{i_{0}0} \cap B_{10 r_{0}}(x_{i_{0}0})$, and by (\ref{rf2}), we get that $d(z', M) \leq \epsilon_{3} \, r_{0}.$\\

Combining (\ref{rf7}), the line after (\ref{rf8}), the line before and the line after (\ref{rf10}), and the fact that $\epsilon_{3} \leq \epsilon$, we get

\be \label{rf11} d_{0} = d(w, M) \leq |w - z| + |z - z'| + d(z', M) \leq C_{6} \epsilon \, r_{0}+ 6 \epsilon r_{0} + \epsilon_{3} \, r_{0} = (C_{6} +7) \, \epsilon \, r_{0}   \leq \frac{r_{0}}{10},\ee
for $\epsilon$ such that $(C_{6} + 7) \, \epsilon \leq \frac{1}{10}$, where $C_{6} = C_{6}(n,d)$.

We proceed by contradiction. Suppose $d_{0} > 0$, then there exists $k \geq 0$ such that $r_{k+1} < d_{0} \leq r_{k}$. Notice that since $w = g(z)$, $z \in \Sigma_{0}$, and the maps $g$ and $f$ agree on $\Sigma_{0}$, then by (\ref{in}), we have 
\be \label{rf12} |w - f_{k}(z)| \leq C(n,d) \, \epsilon \, r_{k}. \ee

Now, by the definition of $d_{0}$, there exists $\xi \in M$ such that $ |\xi - w| \leq \frac{3}{2} d_{0}$. 
Using (\ref{rf11}) and the fact that $r_{0} = \frac{1}{10^{l_{0}+5}}$, we get 
\be \label{rf18} |\xi| \leq |\xi - w| + |w| \leq \frac{3}{2} \frac{r_{0}}{10} + \frac{1}{10^{l_{0}+8}}  \leq \frac{1}{10^{l_{0}+4}}, \ee 
 and thus by (\ref{63'}), there exists $j \in J_{k}$ such that $ \xi \in B_{\frac{3}{2}r_{k}}(x_{jk})$. \\
 
 Since both $k$ and $j$ are now fixed, consider the $n$-plane $P_{jk}$ and the point $x_{jk}$.
 By the line under (\ref{rf18}), the line under (\ref{rf12}), (\ref{rf12}), and the fact that $d_{0} \leq r_{k}$, we have 
 \be \label{rf100} |x_{jk} - f_{k}(z)| \leq |x_{jk} - \xi| + |\xi - w| + |w - f_{k}(z)| \leq \frac{3}{2}r_{k} + \frac{3}{2} d_{0} + C(n,d) \, \epsilon \, r_{k} \leq 3 r_{k} + C_{7} \, \epsilon \, r_{k} \leq 4r_{k}, \ee
 for $\epsilon$ such that $C_{7} \epsilon \leq 1$, where $C_{7} = C_{7}(n,d)$.
 Thus, inequality (\ref{inrf}) tell us that $d(f_{k}(z), P_{jk}) \leq C(n,d) \, \epsilon \, r_{k}$. Let $y \in P_{jk}$ such that $| y - f_{k}(z) | \leq C(n,d) \, \epsilon \, r_{k}$. Then, by (\ref{rf12}), the line below it, the line below (\ref{rf18}), and recalling that $d_{0} \leq r_{k}$, we get 

\be \label{rf17} | y - x_{jk}| \leq |y - f_{k}(z)| + |f_{k}(z) - w| + |w - \xi | + |\xi - x_{jk}| \leq C_{8} \, \epsilon \, r_{k} + 3 r_{k} \leq 5 r_{k} \ee
for $\epsilon$ such that $C_{8} \, \epsilon \leq 1$, where $C_{8} = C_{8}(n,d)$. Thus, $y \in P_{jk} \cap B_{5 r_{k}}(x_{jk})$, and by (\ref{rf5}) there exists $y' \in Q_{jk}$ such that $| y - y'| \leq 3 \epsilon \, r_{k}$. But then, $| y' - x_{jk}| \leq |y - y'| + |y - x_{jk}| \leq 10 \, r_{k}$; thus $ y' \in Q_{jk} \cap B_{10 r_{k}}(x_{jk})$ and by (\ref{rf2}) we get that $d(y',M) \leq \epsilon_{3} \, r_{k}$. \\

Finally, using (\ref{rf12}), the two lines before (\ref{rf17}), and the three lines below it, we get
\be \label{rf19} d_{0} = d(w,M) \leq |w - f_{k}(z)| + |f_{k}(z) - y| + |y - y'| + d(y',M) \leq C(n,d) \, \epsilon \, r_{k}  = C_{9} \epsilon \, r_{k} \leq r_{k+1}    \ee
for $\epsilon$ such that $C_{9} \epsilon \leq \frac{1}{10}$, where $C_{9}= C_{9}(n,d)$ which contradicts the fact that $d > r_{k+1}$.
This finishes the proof of (\ref{rf3}).\\

 Fix $\epsilon < \epsilon_{2} < 1$ such that the lines after (\ref{rf8}), (\ref{rf11}), (\ref{rf100}), (\ref{rf17}), and (\ref{rf19}) hold, and then fix $\epsilon_{3} \leq \epsilon \leq \epsilon_{2}$ such that inequality (\ref{c}) is satisfied with $\epsilon_{3}$ instead of $\epsilon_{0}$, (\ref{c'''}) is satisfied with $\epsilon$ instead of $\epsilon_{2}$ and $\epsilon_{3}$ instead of $\epsilon_{0}$,  the hypothesis of  Claim 1 is satisfied with $\epsilon_{3}$ instead of $\epsilon_{0}$, and such that the line below (\ref{10''''}) is satisfied. Writing $\epsilon_{3} = c_{10} \, \epsilon$, where $c_{10} = c_{10}(n,d,C_{M},C_{P})$, and replacing in (\ref{bb1rf}), we get (\ref{bb}). The proof that $g$ is bi-Lipschitz is the same as from Theorem \ref{MTT'}. 
\end{proof}

\section {The Poincar\'e Inequality (\ref{eqp1'}) is equivalent to the $p$-Poincar\'e inequality} \label{PIQ} 

 Let $(M, d_{0}, \mu)$ to be the metric measure space where $M \subset B_{2}(0)$ is $n$-Ahlfors regular rectifiable set in $\mathbb{R}^{n+d}$, $\mu =$  \( \mathcal{H}^{n} \mres M\) is the Hausdorff measure restricted to $M$, and $d_{0}$ is the restriction of the standard Euclidean distance in $\mathbb{R}^{n+d}$ to $M$. In this section, we prove Theorem \ref{epi}, which states that in the setting described above, the Poincar\'e inequality (\ref{eqp1'}) is equivalent to the $p$-Poincar\'e  inequality (\ref{eqp2'}) and the $Lip$-Poincar\'e inequality (\ref{eqp3'}).\\

We prove that $(iii) \implies (ii) \implies (i) \implies (iii)$. In fact, $(iii) \implies (ii)$ is proved in \cite{M1}. The fact that $(ii) \implies (i)$ follows from a theorem in \cite{K1} where Keith proves the equivalence between $p$-Poincar\'e inequalities and $Lip$-Poincar\'e inequalities. Finally, to prove $(i) \implies (iii)$, we use the well known fact that $X$ supporting a $p$-Poincar\'e inequality is equivalent to having inequality (\ref{eqp2}) hold for all measurable functions $u$ on $X$ and all $p$-weak upper gradients $\rho$ of $u$. Then, we show that $|\nabla^{M}f|$ is a $p$-weak upper gradient of $f$, when $f$ is a Lipschitz function on $\mathbb{R}^{n+d}$. \\

Let us start with stating the theorems that we need, as mentioned in the paragraph above.

\begin{theorem} (see \cite{K1}, Theorem 2) \label{theoremqc}
Let $p \geq 1$, and let $(X,d,\nu)$ be a complete metric measure space, with $\nu$ a doubling measure. Then, the following are equivalent:
\begin{itemize}
\item $(X,d,\nu)$ admits a $p$-Poincar\'e inequality for all measurable functions $u$ on $X$.
\item $(X,d,\nu)$ admits a $Lip$-Poincar\'e inequality for all Lipschitz functions $f$ on $X$.
\end{itemize}
\end{theorem}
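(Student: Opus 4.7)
My plan is to treat the two implications separately, since one direction is essentially immediate while the other is the substantive content of Keith's theorem. The easy direction is $(i) \Rightarrow (ii)$: for any Lipschitz $f$ on $X$, I would verify that the local Lipschitz constant function $Lip f$ is an upper gradient of $f$. Indeed, for any rectifiable curve $\gamma:[0,\ell_\gamma]\to X$ parametrized by arc length, $f\circ\gamma$ is Lipschitz on $[0,\ell_\gamma]$ and hence absolutely continuous, so
\[
|f(\gamma(\ell_\gamma))-f(\gamma(0))| \leq \int_0^{\ell_\gamma} |(f\circ\gamma)'(t)|\,dt,
\]
and at each Lebesgue point one has $|(f\circ\gamma)'(t)|\leq Lip f(\gamma(t))$ because arc-length parametrization gives $d(\gamma(t+h),\gamma(t))\leq|h|$. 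Substituting $\rho = Lip f$ into $(i)$ then yields $(ii)$ with the same constants.

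The harder direction $(ii)\Rightarrow (i)$ is where all the work lies. My strategy would be to approximate an arbitrary measurable $u$ (with upper gradient $\rho$) by Lipschitz functions to which the hypothesis $(ii)$ may be applied. For a small scale $t>0$, I would fix a maximal $t$-separated net $\{x_i\}_{i\in I}\subset X$ and a Lipschitz partition of unity $\{\phi_i\}$ subordinate to the cover $\{B_{2t}(x_i)\}$ with $|\phi_i|_{Lip}\leq C/t$, and form the discrete convolution
\[
u_t(x) := \sum_{i\in I} u_{B_{2t}(x_i)}\,\phi_i(x).
\]
Then $u_t$ is Lipschitz on $X$, and $u_t\to u$ in $L^1_{loc}$ as $t\to 0$ by the Lebesgue differentiation theorem (which applies because $\nu$ is doubling). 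Apply $(ii)$ to $u_t$, use that $u_t\to u$ on the left, and try to push the right-hand side to an integral of $\rho^p$ as $t\to 0$.

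The main obstacle is establishing a pointwise bound of the form $Lip\,u_t(x)\leq C\,\mathcal{M}_{Ct}\rho(x)$, where $\mathcal{M}_{Ct}$ denotes a Hardy--Littlewood-type maximal operator at scale $\sim t$ applied to $\rho$ (or $\rho^p$, under a $1/p$ power). This is the core of Keith's argument and is exactly where both hypotheses are used essentially: completeness of $X$ is needed so that nearby points can actually be joined by rectifiable curves along which the upper gradient inequality controls the oscillation of $u$ (with Fuglede's lemma allowing one to pass from upper gradients to $p$-weak upper gradients in the presence of $\rho\in L^p$), and doubling of $\nu$ is needed so that maximal function averages can be compared across balls of comparable radii. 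Once the pointwise bound is in hand, I would apply $(ii)$ to $u_t$, use Fubini together with doubling to re-express the averaged maximal function as an average of $\rho^p$ over a slightly dilated ball $B_{\lambda' r}(x)$, and send $t\to 0$ using dominated convergence and boundedness of $\mathcal{M}$ on $L^p$ to recover $(i)$. The delicate point in passing to the limit is handling the fact that $u$ is only measurable, which is why one must work with averages of $u$ on small balls (as in the definition of $u_t$) rather than with pointwise values of $u$ along curves.
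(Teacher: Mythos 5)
The paper does not prove this theorem: it is quoted verbatim from Keith's paper \cite{K1} (Theorem 2 there) and used as a black box in the proof of Theorem \ref{epi}. So there is no ``paper's own proof'' to compare against, and the more useful thing I can do is assess your sketch on its own terms.

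Your easy direction $(i)\Rightarrow(ii)$ is correct and complete modulo one measurability remark: to plug $\rho = Lip\,f$ into $(i)$ you need $Lip\,f$ to be a Borel function, and strictly speaking the pointwise limit in the definition of $Lip\,f$ need not exist everywhere -- one should work with the $\limsup$ version, which is Borel. Granted that, the absolute-continuity-of-$f\circ\gamma$ argument you give does show $Lip\,f$ is a genuine upper gradient, and $(ii)$ follows with the same constants, exactly as you say.

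Your hard direction $(ii)\Rightarrow(i)$ correctly identifies the discrete-convolution scheme and the pointwise bound $Lip\,u_t \lesssim \mathcal{M}_{Ct}\rho$ as the crux of Keith's argument, and you correctly flag where completeness (to produce curves via quasiconvexity of $PI$ spaces, or via Semmes's pencil-of-curves argument) and doubling (to run the maximal function estimate and the Lebesgue differentiation theorem) enter. What you have written is a faithful outline of the strategy but it leaves the genuinely hard step -- establishing that pointwise maximal bound -- as an assertion. As a blind proof of a result the paper itself outsources to Keith, this is reasonable; just be aware that the bound you state is not elementary, and that in Keith's actual argument one first reduces to a weak-type $(1,p)$ ``pointwise'' estimate proved by a telescoping chain of balls joined by curves, and then runs the maximal function machinery. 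Nothing you wrote is wrong, but the proposal as stated is an outline with the core lemma deferred, not a proof.
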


\begin{theorem} (see \cite{BB}, Proposition 4.13) \label{propbjorns}
Let $p \geq 1$, and let $(X,d,\nu)$ be a metric measure space. Then, the following are equivalent:
\begin{itemize}
\item Inequality (\ref{eqp2}) holds for all measurable (resp. Lipschitz) functions $u$ on $X$ and all upper gradients $\rho$ of $u$.
\item Inequality (\ref{eqp2}) holds for all measurable (resp. Lipschitz) functions $u$ on $X$ and all $p$-weak upper gradients $\rho$ of $u$.
\end{itemize}
\end{theorem}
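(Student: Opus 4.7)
The implication from the $p$-weak upper gradient formulation to the upper gradient formulation is immediate: every upper gradient of $u$ is automatically a $p$-weak upper gradient, since the exceptional curve family is empty and the empty family has zero $p$-modulus. Thus the substance of Theorem \ref{propbjorns} is the reverse implication: assuming (\ref{eqp2}) for all upper gradients, deduce it for all $p$-weak upper gradients. My plan is to approximate an arbitrary $p$-weak upper gradient $\rho$ of $u$ from above by honest upper gradients, apply the hypothesis to each approximant, and pass to the limit in the resulting Poincar\'e inequality. Without loss of generality we may assume $\rho \in L^p_{\mathrm{loc}}(X,\nu)$, for otherwise the right-hand side of (\ref{eqp2}) is infinite and there is nothing to prove.

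The approximation step is driven by the definition of $p$-modulus. Let $\Gamma$ denote the family of rectifiable curves on which the upper-gradient inequality fails for $\rho$; by hypothesis $\mathrm{Mod}_p(\Gamma)=0$. For every integer $k\ge 1$, the definition of $p$-modulus allows one to choose a nonnegative Borel function $g_k:X\to[0,\infty]$ with $\int_\gamma g_k\,ds\ge 1$ for all $\gamma\in\Gamma$ and with $\|g_k\|_{L^p(X,\nu)}\le 2^{-k}$. Setting $h:=\sum_{k=1}^\infty g_k$, Minkowski's inequality gives $\|h\|_{L^p(X,\nu)}\le 1$, while for every $\gamma\in\Gamma$ one has $\int_\gamma h\,ds = \infty$. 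Consequently, for any $\eta>0$ the function $\tilde\rho_\eta := \rho + \eta h$ is a genuine upper gradient of $u$: on curves $\gamma\notin\Gamma$ the bound $|u(\gamma(0))-u(\gamma(l_\gamma))|\le \int_\gamma\rho\,ds$ automatically persists for $\tilde\rho_\eta\ge\rho$, and on curves $\gamma\in\Gamma$ the inequality holds vacuously since its right-hand side is infinite.

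Applying the hypothesized Poincar\'e inequality to the pair $(u,\tilde\rho_\eta)$ and using Minkowski's inequality on the normalized $L^p$ norm over the ball $B^X_{\lambda r}(x)$ yields
\[
\dashint_{B^X_r(x)}|u(y)-u_{B^X_r(x)}|\,d\nu(y) \;\le\; \kappa\, r \left(\dashint_{B^X_{\lambda r}(x)} \rho^p\,d\nu\right)^{1/p} + \kappa\, r\, \eta \left(\dashint_{B^X_{\lambda r}(x)} h^p\,d\nu\right)^{1/p}.
\]
Since the second term tends to zero as $\eta\to 0^+$, we recover (\ref{eqp2}) for the pair $(u,\rho)$, which is the desired conclusion. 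The argument works verbatim when $u$ is required to be Lipschitz, since the Lipschitz class is preserved (only $u$ enters, not $\rho$), so the \emph{resp.} version follows at once.

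The only non-mechanical step that I anticipate needing care is the modulus-approximation construction of $h$: verifying that a summable sequence of admissible functions $g_k$ can be extracted from $\mathrm{Mod}_p(\Gamma)=0$ with the summability required to keep $\|h\|_{L^p}$ finite and simultaneously $\int_\gamma h\,ds=\infty$ on all of $\Gamma$. Once that $h$ is in hand, the rest is Minkowski and a single limit, and neither the Ahlfors regularity nor any geometry of $X$ is invoked, which is consistent with the statement being a general metric-measure-space fact.
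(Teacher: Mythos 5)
The paper gives no proof of this statement --- it is quoted directly from \cite{BB} (Proposition 4.13) --- and your argument is correct and is essentially the proof given there: from $\mathrm{Mod}_p(\Gamma)=0$ one extracts a Borel function $h$ with $\|h\|_{L^p(X,\nu)}\le 1$ and $\int_\gamma h\,ds=\infty$ for every exceptional curve, so that $\rho+\eta h$ is a genuine upper gradient for each $\eta>0$, and one lets $\eta\to 0$ in the resulting Poincar\'e inequality (this is the content of Lemma 1.46 in \cite{BB} plus a limiting argument), the Lipschitz case being identical since only $u$ is constrained. The single technicality worth adding is measurability: the paper's definition of a $p$-weak upper gradient requires $\rho$ only to be measurable while an upper gradient must be Borel, so before forming $\rho+\eta h$ one should pass to a Borel representative of $\rho$, which affects curve integrals only along a family of zero $p$-modulus by Lemma \ref{lemmabjorns} --- the same device used in the footnote to the proof of Proposition \ref{lemmaqc2} --- and leaves both sides of (\ref{eqp2}) unchanged.
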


Before stating the theorem we need from \cite{M1}, let us make a remark on how the metric balls looks like in the metric measure space $(M, d_{0}, \mu)$. In fact, fix $x \in M$ and $r>0$. It is easy to see that 
\be \label{ballslook} B^{M}_{r}(x) = B_{r}(x) \cap M, \ee
where $B_{r}(x)$ denotes the Euclidean ball in $\mathbb{R}^{n+d}$ of center $x$ and radius $r$. 

\begin{theorem} \label{lemmaqc} (see \cite{M1} Corollary 5.8) \footnote{ Notice that Corollary 5.8 in \cite{M1} is stated and proved in the ambient space $\mathbb{R}^{n+1}$. However, the proof of Corollary 5.8 in \cite{M1} is independent from the co-dimension $d$ of $M$. Thus the exact same statement holds here in the higher co-dimension case. Moreover, notice that in Corollary 5.8, the Poincar\'e inequality assumed is (\ref{eqp1'}) but for $p= \lambda = 2$. This results in getting the Poincar\'e inequality (\ref{eqp3'}) but also for $p=\lambda = 2$. However, it is easy to see that one can assume the Poincar\'e inequality (\ref{eqp1'}) for any $p \geq 1$ and $\lambda \geq 1$, and get inequality (\ref{eqp3'}) for the same $p$ and $\lambda$ that one started with.} Let $(M, d_{0}, \mu)$ be as above. Assume that $M$ satisfies $(iii)$. Then, $M$ satisfies $(ii)$.
\end{theorem}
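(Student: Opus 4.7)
The plan is to deduce $(ii)$ from $(iii)$ by extending a Lipschitz function on $M$ to a Lipschitz function on $\mathbb{R}^{n+d}$, applying the hypothesis $(iii)$ to the extension, and then dominating the integrand on the right-hand side pointwise. Since $f$ and its extension agree $\mu$-a.e., the left-hand side is unchanged; the content of the argument is therefore a pointwise comparison between $|\nabla^{M}\tilde{f}|$ and $\mathrm{Lip}\,f$ on $M$.

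First, given a Lipschitz function $f$ on $M$ with Lipschitz constant $L_f$, I would take the McShane extension
\ben
\tilde{f}(z) \;=\; \inf_{w\in M}\bigl\{ f(w) + L_f\,|z-w|\bigr\},\qquad z\in\mathbb{R}^{n+d},
\een
which is Lipschitz on $\mathbb{R}^{n+d}$ with the same constant and satisfies $\tilde{f}|_M = f$. Because $\mu = \mathcal{H}^{n}\mres M$, the integrals in $(\ref{eqp1'})$ only see values of $\tilde{f}$ on $M$, so applying $(iii)$ to $\tilde{f}$ gives
\ben
\dashint_{B_{r}(x)}|f(y)-f_{x,r}|\,d\mu(y)\;\leq\;\kappa\, r\,\Bigl(\,\dashint_{B_{\lambda r}(x)}|\nabla^{M}\tilde{f}|^{p}\,d\mu\Bigr)^{1/p}.
\een
It then suffices to show that for $\mu$-a.e.\ $y\in M$,
\be\label{keybound}
|\nabla^{M}\tilde{f}(y)|\;\leq\; C(n)\;\mathrm{Lip}\,f(y).
\ee

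The core of the proof is \eqref{keybound}. Fix $y\in M$ where the approximate tangent plane $T_yM$ exists (true $\mu$-a.e.\ by rectifiability), and fix a unit vector $v\in T_yM$. For $t>0$ small, the definition of approximate tangent plane, combined with the Ahlfors regularity of $\mu$, should produce points $z_t \in M$ with $|z_t - (y+tv)| = o(t)$ and $|z_t - y|/t \to 1$ as $t\to 0$. Writing
\ben
\frac{\tilde{f}(y+tv)-\tilde{f}(y)}{t} \;=\; \frac{\tilde{f}(y+tv)-\tilde{f}(z_t)}{t} \;+\; \frac{f(z_t)-f(y)}{t},
\een
the first term is bounded by $L_{\tilde{f}}\,o(1)\to 0$, while the second has absolute value at most
\ben
\frac{|f(z_t)-f(y)|}{|z_t-y|}\cdot\frac{|z_t-y|}{t},
\een
whose $\limsup$ is $\mathrm{Lip}\,f(y)$. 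Hence each directional derivative of $\tilde{f}|_{y+T_yM}$ at $y$ along a unit direction in $T_yM$ is bounded by $\mathrm{Lip}\,f(y)$; choosing an orthonormal basis of $T_yM$ and summing squares yields \eqref{keybound} with $C(n)=\sqrt{n}$. Plugging this into the displayed inequality above gives $(ii)$ with $\kappa' = \sqrt{n}\,\kappa$ (and the same $\lambda$).

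The main obstacle will be the production of the approximating points $z_t \in M$ with $|z_t-(y+tv)|=o(t)$. The approximate tangent plane is defined only in an integral sense (Definition \ref{ats}), not pointwise, so one cannot read off such $z_t$ directly. I would bridge this gap by applying Definition \ref{ats} to suitable bump functions supported in small neighborhoods of $v$ inside $T_yM$ at scale $h=t$, and combining with the Ahlfors lower bound $\mu(B_{\rho}(w))\gtrsim \rho^{n}$ for $w\in M$: this forces $M$ to meet every neighborhood of $y+tv$ of radius $\eta\, t$ once $t$ is small compared to $\eta$, giving the desired $z_t$. Once this measure-theoretic lemma is in place, the rest of the argument is routine.
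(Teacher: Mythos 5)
Your proposal is correct, and its structure (McShane-extend $f$ from $M$ to $\mathbb{R}^{n+d}$, apply $(iii)$ to the extension $\tilde f$, then dominate $|\nabla^{M}\tilde f|$ pointwise $\mu$-a.e.\ by $Lipf$) is the natural argument; the paper defers the proof to Corollary~5.8 of \cite{M1}, and your route is the expected one. Three small refinements. First, the constant in your key pointwise bound can be taken to be $1$ rather than $\sqrt{n}$: once each tangential directional derivative satisfies $|\langle\nabla^{M}\tilde f(y),v\rangle|\le Lipf(y)$ for every unit $v\in T_{y}M$, one has $|\nabla^{M}\tilde f(y)|=\sup\{|\langle\nabla^{M}\tilde f(y),v\rangle|:\ v\in T_{y}M,\ |v|=1\}\le Lipf(y)$ directly, with no basis and no summing of squares. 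Second, the step you flag as the main obstacle --- producing $z_{t}\in M$ with $|z_{t}-(y+tv)|=o(t)$ --- requires only Definition~\ref{ats} and no appeal to Ahlfors regularity: given $\eta>0$, apply the defining limit to a nonnegative $C^{1}_{c}$ bump $\phi$ supported in $B_{\eta}(v)$ with $\phi(v)>0$; since $\int_{T_{y}M}\phi\,d\mathcal{H}^{n}>0$, for all small $h$ the blown-up integral $h^{-n}\int_{M}\phi(h^{-1}(z-y))\,d\mathcal{H}^{n}(z)$ is positive, so some $z\in M$ satisfies $h^{-1}(z-y)\in B_{\eta}(v)$, i.e.\ $d(y+hv,M)<\eta h$. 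Third, you should state explicitly that the directional-derivative bound is meaningful only at points $y$ where $\nabla^{M}\tilde f(y)$ exists; the paper records (after Definition~\ref{deftander}) that this holds $\mu$-a.e.\ for Lipschitz $\tilde f$, which together with the $\mu$-a.e.\ existence of $T_{y}M$ is exactly what you need.
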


To show that $|\nabla^{M}f|$ is a $p$-weak upper gradient of $f$, when $f$ is a Lipschitz function on $\mathbb{R}^{n+d}$, we need the following lemma from \cite{BB}:

\begin{lemma} (see \cite{BB}, Lemma 1.42) \label{lemmabjorns}
Let $p \geq 1$ and let $(M, d_{0}, \mu)$ be as above. Suppose that $E \subset M$, with $\mu(E) = 0$. Denote by $\Gamma(M)$ the set of all rectifiable curves in $M$, and let 
\ben \Gamma_{E} = \left \{ \gamma \in \Gamma(M), \,\, \txt{such that} \,\, \mathcal{L}^{*}_{1}(\gamma^{-1}(E)) \neq 0 \right \}, \een
where $\mathcal{L}^{*}_{1}$ denotes the Lebesgue outer measure on $\mathbb{R}$.
Then, $ \txt{Mod}_{p}(\Gamma_{E}) = 0$.
\end{lemma}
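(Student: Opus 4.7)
The plan is to construct, for each $\epsilon > 0$, a non-negative Borel function $\rho_{\epsilon}$ on $M$ that is admissible for the $p$-modulus of $\Gamma_{E}$ (meaning $\int_{\gamma} \rho_{\epsilon}\, ds \geq 1$ for every $\gamma \in \Gamma_{E}$) and whose $L^{p}(\mu)$-norm is at most $\epsilon$. Once such a $\rho_{\epsilon}$ is available, the definition of $\txt{Mod}_{p}$ will give $\txt{Mod}_{p}(\Gamma_{E}) \leq \epsilon^{p}$, and letting $\epsilon \to 0$ finishes the proof.

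First I would use outer regularity of $\mu$. Since $M$ sits inside $\mathbb{R}^{n+d}$ and $\mu = \mathcal{H}^{n} \mres M$ is locally finite (by $n$-Ahlfors regularity), $\mu$ is a Radon, hence Borel outer regular, measure on $M$. Because $\mu(E) = 0$, for each integer $k \geq 1$ I can choose an open set $U_{k} \subset M$ with $E \subset U_{k}$ and $\mu(U_{k}) \leq \epsilon^{p}\, 2^{-kp}$. I then define
\ben \rho_{\epsilon} = \sum_{k=1}^{\infty} \chi_{U_{k}}, \een
a non-negative Borel function on $M$. Minkowski's inequality directly gives
\ben \left(\int_{M} \rho_{\epsilon}^{p}\, d\mu\right)^{1/p} \leq \sum_{k=1}^{\infty} \mu(U_{k})^{1/p} \leq \sum_{k=1}^{\infty} \epsilon\, 2^{-k} = \epsilon, \een
so the $L^{p}$-bound comes for free.

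Next I would verify admissibility, which is where the particular choice of $\rho_{\epsilon}$ pays off. Fix $\gamma \in \Gamma_{E}$, parametrized by arclength on $[0, l_{\gamma}]$, and set $\delta := \mathcal{L}_{1}^{*}(\gamma^{-1}(E)) > 0$. Since each $U_{k}$ is open in $M$ and $\gamma$ is continuous, $\gamma^{-1}(U_{k})$ is an open (in particular Lebesgue measurable) subset of $[0, l_{\gamma}]$ containing $\gamma^{-1}(E)$; thus $\mathcal{L}_{1}(\gamma^{-1}(U_{k})) \geq \delta$ for every $k$. By monotone convergence,
\ben \int_{\gamma} \rho_{\epsilon}\, ds = \sum_{k=1}^{\infty} \int_{0}^{l_{\gamma}} \chi_{U_{k}}(\gamma(t))\, dt = \sum_{k=1}^{\infty} \mathcal{L}_{1}(\gamma^{-1}(U_{k})) = +\infty, \een
which is certainly $\geq 1$. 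So $\rho_{\epsilon}$ is admissible, and letting $\epsilon \to 0$ concludes the proof.

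The hard part of the plan is really the outer regularity input in the very first step; everything afterwards is a textbook Minkowski-plus-monotone-convergence computation. In our setting this input is a routine consequence of $\mathcal{H}^{n} \mres M$ being a Radon measure on a Euclidean subset satisfying Ahlfors regularity, so no genuinely new ingredient beyond the standing hypotheses is needed.
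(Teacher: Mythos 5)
Your proof is correct and is essentially the classical argument; the paper does not prove this lemma itself but cites \cite{BB} (Lemma 1.42), whose proof follows the same scheme you describe: use outer regularity of $\mu$ to enclose $E$ in open sets of small measure, sum the indicators to build an admissible Borel function with arbitrarily small $L^{p}$-norm, and conclude by monotone convergence that the line integral over any $\gamma \in \Gamma_{E}$ diverges.
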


\begin{proposition} \label{lemmaqc2}
 Let $(M, d_{0}, \mu)$ be as above, and suppose $f$ be a Lipschitz function on $\mathbb{R}^{n+d}$. Then, $|\nabla^{M}f|$ (or more precisely, any non-negative extension of $|\nabla^{M}f|$ to the whole space $M$) is a $p$-weak upper gradient of $f|_{M}$, the restriction of $f$ on $M$. \end{proposition}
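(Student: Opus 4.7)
The plan is to verify the $p$-weak upper gradient inequality directly from the definition. The strategy has three ingredients: first, isolate a $\mu$-null exceptional set $E\subset M$ and apply Lemma \ref{lemmabjorns} to discard the $p$-modulus zero family of curves that spend positive length in $E$; second, use absolute continuity of Lipschitz functions along curves to reduce the integral inequality to a pointwise bound on the derivative; third, establish that bound via a chain-rule argument on the tangent plane.

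Concretely, let $E$ be the union of the $\mu$-null sets where $T_{y}M$ does not exist, where $\nabla^{M}f(y)$ does not exist, and the set $M_{0}$ from the rectifiable decomposition $M\subset M_{0}\cup\bigcup_{i}f_{i}(A_{i})$ of Definition \ref{rect}; then $\mu(E)=0$ and Lemma \ref{lemmabjorns} gives $\txt{Mod}_{p}(\Gamma_{E})=0$, where $\Gamma_{E}=\{\gamma:\mathcal{L}^{*}_{1}(\gamma^{-1}(E))>0\}$. For a rectifiable curve $\gamma:[0,l_{\gamma}]\to M$ with $\gamma\notin\Gamma_{E}$, parametrized by arc length, $g:=f\circ\gamma$ is Lipschitz (composition of Lipschitz maps), hence absolutely continuous, so $|g(l_{\gamma})-g(0)|\le\int_{0}^{l_{\gamma}}|g'(t)|\,dt$. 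It therefore suffices to prove $|g'(t)|\le|\nabla^{M}f(\gamma(t))|$ at a.e. $t$.

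The main obstacle is the tangency claim $\gamma'(t)\in T_{\gamma(t)}M$ for a.e. $t$. For this I would invoke the standard refinement of the rectifiable decomposition into bi-Lipschitz pieces (absorbing any residual null set into $E$): on each piece where $f_{i}$ is bi-Lipschitz, injective, differentiable, and $Df_{i}$ has rank $n$, the lift $a_{t}:=f_{i}^{-1}(\gamma(t))$ is Lipschitz on $\gamma^{-1}(f_{i}(A_{i}))$, the identity $T_{\gamma(t)}M=Df_{i}(a_{t})(\mathbb{R}^{n})$ holds at a.e. such point, and the Lipschitz chain rule yields $\gamma'(t)=Df_{i}(a_{t})a'_{t}\in T_{\gamma(t)}M$.

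With the tangency established, at any good $t_{0}$ set $y_{0}=\gamma(t_{0})$, $v=\gamma'(t_{0})\in T_{y_{0}}M$, $L=y_{0}+T_{y_{0}}M$, and expand $\gamma(t_{0}+h)=y_{0}+hv+o(h)$ as $h\to 0$. Since $f$ is $L_{f}$-Lipschitz, $f(\gamma(t_{0}+h))=f(y_{0}+hv)+o(h)$, and since $y_{0}+hv\in L$, the definition $\nabla^{M}f(y_{0})=\nabla(f|_{L})(y_{0})$ gives
\ben
f(y_{0}+hv)-f(y_{0})=h\,\nabla^{M}f(y_{0})\cdot v+o(h).
\een
Therefore $g'(t_{0})=\nabla^{M}f(y_{0})\cdot v$, and using $|v|\le 1$ we obtain $|g'(t_{0})|\le|\nabla^{M}f(y_{0})|$. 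Integrating yields the $p$-weak upper gradient inequality along every $\gamma\notin\Gamma_{E}$; replacing $|\nabla^{M}f|$ by any non-negative extension to all of $M$ only enlarges the right-hand side, so the conclusion holds for any such extension.
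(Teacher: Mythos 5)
Your proposal follows the paper's overall scheme exactly: isolate the $\mu$-null exceptional set $E$, invoke Lemma \ref{lemmabjorns} to get $\txt{Mod}_{p}(\Gamma_{E})=0$, use absolute continuity of $f\circ\gamma$ to reduce to the pointwise bound $|(f\circ\gamma)'(t)|\le |\nabla^{M}f(\gamma(t))|$, and establish that bound from $\gamma'(t)\in T_{\gamma(t)}M$ together with the definition $\nabla^{M}f=\nabla(f|_{L})$ and the Lipschitz error control $f(\gamma(t+h))=f(\gamma(t)+h\gamma'(t))+o(h)$. The one place you genuinely part ways with the paper is in establishing the tangency $\gamma'(t)\in T_{\gamma(t)}M$. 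The paper disposes of this in a footnote by asserting $\lim_{r\to 0}\sup_{y\in (M-\gamma(t))/r}d(y,T_{\gamma(t)}M)=0$, i.e.\ a local Hausdorff convergence of rescaled copies of $M$ to the approximate tangent plane; for a general rectifiable set this strong, pointwise-flatness statement is not a consequence of the existence of an approximate tangent plane (which is a measure-theoretic condition) at $\mathcal{H}^{n}$-a.e.\ point, so the footnote is doing more work than it advertises. You instead enlarge $E$ to include $M_{0}$ and the bad points of a bi-Lipschitz refinement of the rectifiable decomposition, lift $\gamma$ locally through $f_{i}^{-1}$, and apply the Lipschitz chain rule, which gives $\gamma'(t)=Df_{i}(a_{t})a_{t}'\in T_{\gamma(t)}M$. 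This is the standard, rigorous route; the only caveat is that $\gamma^{-1}(f_{i}(A_{i}))$ need not be an interval, so the chain rule you are using is the density-point version for Lipschitz maps restricted to measurable subsets of $\mathbb{R}$ (or, equivalently, one extends $a_{t}$ Lipschitz off $\gamma^{-1}(f_{i}(A_{i}))$ and works at density points). Modulo spelling out that measure-theoretic point, your argument is correct and in fact fills a small gap in the paper's own justification.
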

 
\begin{proof}

Since $f$ Lipschitz on $\mathbb{R}^{n+d}$, we know that $\nabla^{M}f$ exists $\mu$-almost everywhere. Let
\ben  E = \left \{ x \in M \,\,\, \txt{such that} \,\,\, \nabla^{M}f(x) \,\, \txt{does not exist}  \right \}.\een
Then, $\mu(E)=0$, and by Lemma \ref{lemmabjorns}, we know that Mod$_{p}(\Gamma_{E}) = 0$. Now, let $\gamma$ be a rectifiable curve in $M$, parametrized by arc length, such that $\gamma \notin \Gamma_{E}$. Then, $\mathcal{L}_{1}(\gamma^{-1}(E)) = 0$. Moreover, Since $f \circ \gamma$ is Lipschitz, and thus absolutely continuous on $[0, l_{\gamma}]$, we have

\bes \label{np1} \big | f|_{M}(\gamma(0)) - f|_{M}(\gamma(l_{\gamma})) \big |  &=&   | f(\gamma(0)) - f(\gamma(l_{\gamma}))| \nonumber \\ &=&  \left | \int_{0}^{l_{\gamma}} (f \circ \gamma)'(t) \, dt \right | \nonumber \\ &=&  \left | \int_{t \in [0, l_{\gamma}]; \, \gamma(t) \notin E} (f \circ \gamma)'(t) \, dt \right | \leq  \int_{t \in [0, l_{\gamma}]; \, \gamma(t) \notin E} |(f \circ \gamma)'(t)| \, dt \ees

Let $t \in [0, l_{\gamma}]$ such that $\gamma(t) \notin E$. Then, $T_{\gamma(t)}M$ exists, and $\nabla^{M}f(\gamma(t)) \in T_{\gamma(t)}M$. We first show that 
\be \label{np4} | (f \circ \gamma)'(t)| \leq | \nabla^{M} f (\gamma(t))|. \ee
Since $\gamma '(t) \in T_{\gamma(t)}M$ \footnote{This follows directly from the facts that for any sequence $r \to 0$, we have $\gamma'(t) = \displaystyle \lim_{r \to 0} \displaystyle \frac{\gamma(t+r) - \gamma(t)}{r}$ and  $\displaystyle \lim_{r \to 0} \displaystyle \sup_{y \in \frac{M - \gamma(t)}{r}} d(y, T_{\gamma(t)}M) = 0$.}is a unit vector, then by Rademacher's Theorem, we have

\be \label{np3} \lim_{h \to 0} \frac{| f \big (\gamma(t) +h \gamma '(t) \big ) - f \big(\gamma(t)\big) - h  <\nabla^{M} f (\gamma(t)), \gamma '(t) >|}{h} = 0. \ee

Now, for any $-t < h < l_{\gamma} - t$, we have
\be \begin{split} \label{np5} \frac{|f \big (\gamma(t+h) \big ) - f \big(\gamma(t)\big)|}{h} & \leq \\ &\frac{|f \big (\gamma(t+h) \big ) - f \big (\gamma(t) + h \gamma '(t) \big )|}{h} + \frac{|f \big (\gamma(t) + h \gamma '(t) \big ) - f\big(\gamma(t)\big)|}{h} \nonumber \\
&\leq L_{f}\, \frac{| \gamma(t+h) - \gamma(t) - h \gamma '(t) |}{h} + \frac{|f \big (\gamma(t) + h \gamma '(t) \big ) - f\big(\gamma(t)\big)|}{h} \, , \end{split} \ee
where in the last step, we used the fact that $f$ is Lipschitz on $\mathbb{R}^{n+d}$.\\ 

Taking the limit as $h \to 0$ on both sides of (\ref{np5}), and using (\ref{np3}) and the fact that $\gamma '(t)$ is a unit vector, we get
\ben | (f \circ \gamma)'(t)| \leq | \nabla^{M} f(\gamma(t)) \cdot \gamma ' (t)| \leq  | \nabla^{M} f(\gamma(t))| \een
which is exactly (\ref{np4}). Replacing (\ref{np4}) in (\ref{np1}), we get 

\be \label{np8} \big | f|_{M}(\gamma(0)) - f|_{M}(\gamma(l_{\gamma}))\big | \leq \int_{t \in [0, l_{\gamma}]; \, \gamma(t) \notin E} | \nabla^{M} f (\gamma(t))| \, dt. \ee

Now, define the map $G : M \to [0, \infty]$ to be any non-negative extension of $|\nabla^{M}f|$ to the whole space $M$ (that is, $G(x) = |\nabla^{M}f(x)|$ on $M \setminus E$, which means that $G = |\nabla^{M}f|\,  \mu$-a.e.).  Plugging back in (\ref{np8}), we get

\bes \label{np7} | f|_{M}(\gamma(0)) - f|_{M}(\gamma(l_{\gamma}))| &\leq& \int_{t \in [0, l_{\gamma}]; \, \gamma(t) \notin E} G(\gamma(t)) \, dt \nonumber \\ &=& \int_{t \in [0, l_{\gamma}]; \, \gamma(t) \notin E} G(\gamma(t)) \, dt + \int_{t \in [0, l_{\gamma}]; \, \gamma(t) \in E} G(\gamma(t)) \, dt \nonumber \\  &=& \int_{0}^{l_{\gamma}} G\big(\gamma (t)\big) dt = \int_{\gamma} G \, ds. \ees \footnote{ The function G defined here is clearly measurable. However, since any non-negative measurable function coincides $\mu$-almost everywhere with a non-negative Borel function (see \cite{BB}, Proposition 1.2), we can assume, without any loss of generality that $G$ is Borel. In this case, $\int_{\gamma}G \, ds$ is well defined for any rectifiable curve $\gamma$ in $M$, and we do not need to worry about the last step in (\ref{np7}).}
This finishes the proof that $G$ is a $p$-weak upper gradient of $f|_{M}$.
\end{proof}

We are finally ready to prove Theorem \ref{epi}: \\

\textbf{\underline{\textit{Proof of Theorem \ref{epi}:}}}

\begin{proof}
We prove $ (iii) \implies (ii) \implies (i) \implies (iii)$:\\

 \noindent $(iii) \implies (ii)$:\\ This is exactly Theorem \ref{lemmaqc}.\\
 
 \noindent $(ii) \implies (i)$: \\ Notice that by using (\ref{ballslook}), we will be done if we apply Theorem \ref{theoremqc} to the metric measure space $(M, \mu, d_{0})$. In fact, $M$ is complete since it is closed and bounded. Moreover, the fact that $\mu$ is doubling follows from (\ref{ballslook}) and the Ahlfors regularity of $\mu$. Hence, we can apply Theorem \ref{theoremqc} to $(M, \mu, d_{0})$.\\

\noindent $(i) \implies (iii)$: \\
Notice that by Theorem \ref{propbjorns}, we know that $(i)$ implies that inequality (\ref{eqp2}) holds for all measurable functions $u$ on $M$ and all $p$-weak upper gradients $\rho$ of $u$. Let $f$ be a Lipschitz function $f$ on $\mathbb{R}^{n+d}$, and fix $x \in M$ and $r>0$. Then, $f|_{M}$ is a Lipschitz function on $M$, and by Lemma \ref{lemmaqc2}, $|\nabla^{M}f|$ agrees $\mu$-almost everywhere with $G$, a $p$-weak upper gradient of $f|_{M}$. Applying (\ref{eqp2}) for $u = f|_{M}$, $\rho = G$, and the ball $B = B_{r}(x) \cap M$, we get 
\ben  \dashint_{B_{r}(x)} \left| f(y) - f_{x,r}\right| d \mu(y) \leq \kappa r \left(\,\dashint_{B_{\lambda r}(x)} G(y)^{2} \, d \mu(y) \right)^{\frac{1}{2}} = \kappa r \left(\,\dashint_{B_{\lambda r}(x)} (|\nabla^{M}f|(y))^{2} \, d \mu(y) \right)^{\frac{1}{2}}  \een
hence finishing the proof
\end{proof}

\section{The conclusion of Theorem \ref{MTT'} is optimal} \label{MCSHH}

In this section, we prove Theorem \ref{construct} by giving an example of a non-Reifenberg flat, $2$-Ahlfors regular rectifiable set $M \subset \mathbb{R}^{3}$ that satisfies the Carleson condition (\ref{103}) and the Poincar\'e-type inequality (\ref{eqp}).  \\

To construct this example, we use the well known fact that Lipschitz domains support a $p$-Poincar\'e-type inequality, together with Theorem \ref{epi} that allows us to go from a $p$-Poincar\'e inequality to the Poincar\'e inequality (\ref{eqp1'}). \\

In order to keep track of where the balls live, $B^{2}_{r}(x)$ will denote the Euclidean ball in $\mathbb{R}^{2}$ of center $x$ and radius $r$, whereas $B^{3}_{r}(x)$ will be that in $\mathbb{R}^{3}$. Moreover diam($A$) denotes the diameter of a set $A$.

\begin{definition}
We say that a bounded set $A \subset \mathbb{R}^{2}$ satisfies the \emph{corkscrew condition} if there exists $\delta >0$ such that for all $x \in \bar{A}$ and $0<r \leq \txt{diam}(A)$, the set $B^{2}_{r}(x) \cap A$ contains a ball with radius $\delta r$.
\end{definition} 

\begin{definition}
We say that an open, bounded set $A \subset \mathbb{R}^{2}$ is Lipschitz domain if the boundary of $A$, $\partial A$ can be written, locally, as a graph of a Lipschitz function. More precisely, A is a Lipschitz domain if for every point $x \in \partial A$ there exists a radius $r>0$ and a bijective map $h_{x}: B^{2}_{r}(x) \to B^{2}_{1}(0)$ such that the following holds:

\begin{itemize}
\item $h_{x}$ and $h_{x}^{-1}$ are Lipschitz continuous.
\item $h_{x}(\partial A \cap B^{2}_{r}(x)) = Q_{0}$,
and
\item $h_{x}(A \cap B^{2}_{r}(x)) = Q_{1}$,
\end{itemize}
where $Q_{0} = \left\{ (x_{1}, x_{2}) \in B^{2}_{1}(0); x_{2} = 0 \right\}$ and $Q_{1} = \left\{ (x_{1}, x_{2}) \in B^{2}_{1}(0); x_{2} > 0 \right\}$.
\end{definition}

In \cite{BS}, J. Bj\"{o}rn and N. Shanmugalingam prove that Lipschitz domains support $p$-Poincar\'e-type inequalities: 

\begin{theorem} \label{Ht1} (see \cite{BS} Theorem 4.4) 
Consider the Hausdorff measure $\mathcal{H}^{2}$ on $\mathbb{R}^{2}$. Let $\Omega$ be any Lipschitz domain on $\mathbb{R}^{2}$. Then, $\Omega$ supports a 2-Poincar\'e-type inequality, that is there exist constants $\kappa\geq1$ and $\lambda\geq1$ such that for every $x \in \bar{ \Omega}$, and $r>0$, and for every Lipschitz function $u: \Omega \rightarrow \mathbb{R}$ and any upper gradient $\rho$ of $u$ in $\Omega$, the following holds
\be \label{Heq1} \dashint_{B^{2}_{r}(x) \cap \Omega} \left| u(y) - u_{x,r} \right| \, d \mathcal{H}^{2}(y) \leq \kappa \, r \, \left(\,\dashint_{B^{2}_{\lambda r}(x) \cap \Omega} \rho(y)^{2} \, d \mathcal{H}^{2}(y) \right)^{\frac{1}{2}}, \ee
where  $u_{x,r} := \dashint_{B^{2}_{r}(x) \cap \Omega} u \, d \mathcal{H}^{2}$.
\end{theorem}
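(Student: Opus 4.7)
The plan is to prove Theorem \ref{Ht1} by combining the John (uniform) domain structure of Lipschitz domains in $\mathbb{R}^2$ with a Whitney-type chaining argument that telescopes the classical Euclidean Poincar\'e inequality on balls. First, I would reduce to the case where the upper gradient is the Euclidean gradient. Since $u$ is Lipschitz on the open set $\Omega \subset \mathbb{R}^2$, Rademacher's theorem guarantees that $\nabla u$ exists $\mathcal{H}^2$-almost everywhere, and a line-integral check on small Euclidean balls contained in $\Omega$ shows that any upper gradient $\rho$ of $u$ must satisfy $\rho(y) \geq |\nabla u(y)|$ for $\mathcal{H}^2$-a.e.\ $y \in \Omega$. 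Hence it suffices to establish (\ref{Heq1}) with $\rho$ replaced by $|\nabla u|$.

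The key geometric input is the John domain property of $\Omega$. The Lipschitz graph description of $\partial \Omega$, together with the corkscrew condition inherited from the bi-Lipschitz flattening maps $h_x$, supplies a distinguished point $x_\Omega \in \Omega$ and a constant $c > 0$ such that every $y \in \Omega$ is joined to $x_\Omega$ by a rectifiable curve $\gamma_y \subset \Omega$ with $\mathrm{dist}(\gamma_y(s), \partial \Omega) \geq c\, s$ at arc-length parameter $s$. Applied locally to $B^2_r(x) \cap \Omega$ for $x \in \bar{\Omega}$ and $r > 0$, the same argument produces an interior corkscrew ball $B_\star \subset B^2_r(x) \cap \Omega$ of radius comparable to $r$ and a John structure centered at $B_\star$; in particular $\mathcal{H}^2(B^2_r(x) \cap \Omega) \gtrsim r^2$, with constants depending only on the Lipschitz constants of the chart maps $h_x$.

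With this structure, for each $y \in B^2_r(x) \cap \Omega$ I would select a Whitney chain of balls $B_\star = B_0, B_1, \ldots, B_{N_y}$ along $\gamma_y$, with centers $z_i \in \gamma_y$, radii $r_i \sim \mathrm{dist}(z_i, \partial \Omega)$, doubles $2 B_i \subset \Omega$, and consecutive balls significantly overlapping. On each $B_i$, the classical Euclidean $(1,2)$-Poincar\'e inequality applies and yields $|u_{B_i} - u_{B_{i+1}}| \lesssim r_i \bigl( \dashint_{2 B_i} |\nabla u|^2 \, d\mathcal{H}^2 \bigr)^{1/2}$. Telescoping along the chain and letting $B_{N_y}$ shrink to $y$ gives the pointwise bound
\ben |u(y) - u_{B_\star}| \lesssim \sum_{i \geq 0} r_i \left(\dashint_{2 B_i} |\nabla u|^2 \, d\mathcal{H}^2\right)^{\frac{1}{2}} \quad \txt{for a.e.} \,\, y \in B^2_r(x) \cap \Omega. \een

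Finally, I would integrate this estimate over $y \in B^2_r(x) \cap \Omega$, replace $u_{B_\star}$ by $u_{x,r}$ at a bounded multiplicative cost (using $\mathcal{H}^2(B_\star) \gtrsim r^2$), and collapse the sum over Whitney balls into a single integral using their bounded overlap. This yields (\ref{Heq1}) with a dilation constant $\lambda$ depending only on the Lipschitz constant of $\partial \Omega$. The main obstacle is the last step: passing from the $\ell^1$ telescoping sum of $L^2$-averages to a single $L^2$-average on $\lambda B^2_r(x) \cap \Omega$. This requires a Hardy-type inequality along John chains (weighted by powers of the distance to $\partial \Omega$), combined with Fubini-type counting for how many chains $\gamma_y$ pass through a given Whitney ball. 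This combinatorial accounting is where the Lipschitz (equivalently, John) constants enter quantitatively to produce the final $\kappa$.
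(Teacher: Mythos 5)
The paper never proves this statement: it is quoted directly from Bj\"orn--Shanmugalingam \cite{BS} (their Theorem 4.4), so there is no internal proof to compare against. Your strategy --- replace the upper gradient by $|\nabla u|$ via Rademacher plus a line-integral argument, then run a Whitney/John chaining argument telescoping the Euclidean $(1,2)$-Poincar\'e inequality on balls --- is exactly the standard route by which such results are established (Boman chain condition, uniform domains), so in spirit you are reconstructing the cited proof rather than offering a genuinely different one. The first reduction is fine and standard.

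However, two points in your sketch are genuine gaps as written. First, the claim that $B^2_r(x)\cap\Omega$ itself carries a corkscrew ball $B_\star$ of radius comparable to $r$ and a John structure ``with constants depending only on the Lipschitz constants of the chart maps'' is false in general: for a fixed Lipschitz domain (think of a horseshoe- or C-shaped one), $B^2_r(x)\cap\Omega$ can be disconnected, or connected only through a bottleneck much narrower than $r$, so no chain from $y$ to $B_\star$ can stay inside $B^2_r(x)\cap\Omega$, and the John constants of these intersections are not controlled by the local Lipschitz character. What is true, and what \cite{BS} actually exploits, is that a bounded Lipschitz domain is a uniform (hence John and quasiconvex) domain with a constant depending on the global geometry of $\Omega$; the chains joining $y\in B^2_r(x)\cap\Omega$ to the central ball must then be allowed to run inside $B^2_{\lambda r}(x)\cap\Omega$, which is precisely why the dilation $\lambda$ appears and why $\kappa$ and $\lambda$ depend on $\Omega$ itself and not merely on the Lipschitz constant of its boundary charts. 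Second, the step you yourself flag as the main obstacle --- passing from the $\ell^1$ chain sum of local $L^2$ averages to a single $L^2$ average over $B^2_{\lambda r}(x)\cap\Omega$ --- is the analytic heart of the proof (bounded overlap of the Whitney balls combined with a Riesz-potential/maximal-function or Boman-type absorption argument), and your proposal names the needed tools without carrying them out. With these two repairs the argument goes through, but as it stands the sketch asserts a local John property that fails and leaves the key summation unproved.
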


We are now ready to construct our example. Let $\Omega := B^{2}_{1}(0) \setminus Q$ where $Q$ is the closed square of center $(\frac{1}{2},0)$, and side $l = \frac{1}{10}$. Since $\Omega$ is a Lipschitz domain, by Theorem \ref{Ht1}, it supports the 2-Poincar\'e-type inequality (\ref{Heq1}). \\

\textbf{\underline{\textit{Proof of Theorem \ref{construct}:}}}

\begin{proof} Let $\Omega$ be as in the construction above, and let $M: = \bar{\Omega} \times \{0\} \subset \mathbb{R}^{3}$. We prove this theorem for $n=2$, $d = 1$, and  $\mu =$  \( \mathcal{H}^{2} \mres M\). However, with a similar construction\footnote{ In general, we take $\Omega := B^{n}_{1}(0) \setminus Q$ where $Q$ is the closed $n$-cube of center $(\frac{1}{2}, \underbrace{0, \ldots,0}_\text{$n-1$-times})$, and side $l = \frac{1}{10}$. Then, $M := \bar{\Omega} \times \underbrace{(0, \ldots, 0)}_\text{$d$-times}$.}, the theorem holds for any $n \geq 2$ and $d \geq 1$.\\

It  is trivial to see that $M$ is a rectifiable non-Reifenberg flat set. To see that $M$ is 2-Ahlfors regular, first note that $M$ is closed by construction. So, we show that there exists a constant $C_{M} \geq 1$ such that for every $x \in M$ and $0<r \leq 1$, we have

\be \label{Heq2} C_{M}^{-1} \, r^{2}  \leq  \mu(M\cap B^{3}_{r}(x)) \leq C_{M} \, r^{2}. \ee
By the definition of $\mu$ and the construction of $M$, proving (\ref{Heq2}) translates to proving that for every $\bar{x} \in \bar{\Omega}$ and $0<r \leq 1$, 

\be \label{Heq3} C_{M}^{-1} \, r^{2}  \leq  \mathcal{H}^{2} (\bar{\Omega} \cap B^{2}_{r}(\bar{x})) \leq C_{M} \, r^{2}. \ee

The right hand side of (\ref{Heq3}) is trivial since $\mathcal{H}^{2}(\bar{\Omega} \cap B^{2}_{r}(\bar{x})) \leq \mathcal{H}^{2}(B^{2}_{r}(\bar{x})) = \omega_{2} \, r^{2}$. For the left hand side, notice that since $\Omega$ is a Lipschitz domain, then it is automatically a corkscrew domain, and thus there exists an $\delta >0$, such that for every $\bar{x} \in \bar{\Omega}$ and for every $0<r \leq \txt{diam}(\Omega) = 1$, there is a ball $B^{2}_{\delta r}(\bar{x}) \subset \bar{\Omega} \cap B^{2}_{r}(\bar{x})$. So, $\omega_{2} \, \delta^{2}r^{2} =\mathcal{H}^{2}( B^{2}_{\delta r}(\bar{x})) \leq \mathcal{H}^{2}(\bar{\Omega} \cap B^{2}_{r}(\bar{x})) $, and the proof of (\ref{Heq3}) is done.\\

Let us now prove that the Carleson-type condition (\ref{103}) holds. Let $\epsilon_{0}$ be the constant from the statement of Theorem \ref{MTT'}. Since $M$ has co-dimension 1, (\ref{103}) can be written as (\ref{103old}), and thus proving (\ref{103}) translates to proving 
\be \label{Heq4} \sup_{x \in A \cap B^{3}_{1}(0)} \,\, \int_{0}^{1} \left(\,\dashint_{B^{3}_{r}(x)} |\nu(y) - \nu_{x,r}|^{2} \, d \mu \right) \frac{dr}{r} < \epsilon_{0}^{2} ,\ee
where $\nu$ denotes the unit normal to $M$ and $\nu_{x,r} := \dashint_{B^{3}_{r}(x)} \nu \, d \mu$.
But for $\mu$-almost every $y$, $\nu(y)$ exists and $\nu(y) = <0,0,1>$. Thus, the left hand side of (\ref{Heq4}) is always 0, and (\ref{Heq4}) is satisfied. \\

Finally, let us prove that $M$ satisfies the following Poincar\'e inequality

\be \label{Heq6ag} \dashint_{B^{3}_{r}(x)} \left| f(y) - f_{x,r} \right| \, d \mu(y) \leq \kappa \, r \, \left(\,\dashint_{B^{3}_{\lambda r}(x)}|\nabla^{M}f(y)|^{2} \, d \mu(y) \right)^{\frac{1}{2}},\ee
for some $\kappa \geq 1$ and $\lambda \geq 1$, and where $x \in M$, $r >0$, $f$ is a Lipschitz function on $\mathbb{R}^{3}$, and $f_{x,r} := \dashint_{B^{3}_{r}(x)} f \, d \mu$. By Theorem \ref{epi}, it suffices to show that 
 
 \be \label{Heq6agag} \dashint_{B^{3}_{r}(x)} \left| f(y) - f_{x,r} \right| \, d \mu(y) \leq \kappa \, r \, \left(\,\dashint_{B^{3}_{\lambda r}(x)} \rho(y)^{2} \, d \mu(y) \right)^{\frac{1}{2}},\ee
for some $\kappa \geq 1$ and $\lambda \geq 1$, and where $x \in M$, $r >0$, $f$ is a Lipschitz \footnote{Notice that $(i)$ in Theorem \ref{epi} states that inequality (\ref{eqp2'}) should hold for all measurable functions $f$ and not only Lipschitz functions. However, from the proof of Theorem \ref{epi}, we know that the theorem still holds if we restrict $(i)$ to Lipschitz functions only.} function on $M$, $\rho$ is an upper gradient of $f$ in $M$, and $f_{x,r} := \dashint_{B^{3}_{r}(x)} f \, d \mu$.\\

Let $f$ be a Lipschitz function on $M$, and $\rho$ an upper gradient of $f$ on $M$. Fix $x \in M$ and $r>0$. Let $\tilde{x} \in \bar{\Omega}$ such that $(\tilde{x},0) = x$, and define the functions $\tilde{f} : \Omega \to \mathbb{R}$ and $\tilde{\rho} : \Omega \to [0, \infty]$ such that $\tilde{f}(a,b) = f(a,b,0)$ and $\tilde{\rho}(a,b) = \rho(a,b,0)$. It is easy to see that $\tilde{f}$ is a Lipschitz function on $\Omega$, and $\tilde{\rho}$ is an upper gradient to $\tilde{f}$ in $\Omega$. Thus, by the definition of $\mu$, the construction of $M$, the fact that $\mathcal{H}^{2}(\bar{\Omega} \setminus \Omega) = 0$, and using (\ref{Heq1}) (for $x = \tilde{x}$, $u = \tilde{f}$, and $\rho = \tilde{\rho}$), we get 

\besn \dashint_{B^{3}_{r}(x)} \left| f(y) - f_{x,r} \right| \, d \mu(y) &=& \dashint_{B^{2}_{r}(\tilde{x}) \cap \Omega} \left| \tilde{f}(y) - \tilde{f}_{\tilde{x},r} \right| \, d \mathcal{H}^{2}(y) \\ 
&\leq& \kappa \, r \, \left(\,\dashint_{B^{2}_{\lambda r}(\tilde{x})\cap \Omega} \tilde{\rho}(y)^{2} \, d \mathcal{H}^{2}(y) \right)^{\frac{1}{2}} \\ 
 &=& \kappa \, r \, \left(\,\dashint_{B^{3}_{\lambda r}(x)} \rho(y)^{2} \, d \mu(y) \right)^{\frac{1}{2}}, \eesn
which is exactly (\ref{Heq6agag}) hence finishing the proof of this theorem
 \end{proof}
 
 \begin{remark} Notice that one could take away more that one square $Q$ from the ball $B^{2}_{1}(0)$ and still get the same result of this section. The important thing about the construction above is that $\Omega$ is a Lipschitz domain; Thus if we want to construct a set with $m$ holes that satisfies the hypotheses of Theorem \ref{MTT'}, all we need to do is make sure that the squares we take away from the ball $B^{2}_{1}(0)$ and are far away from each other (that is, they do not accumulate). That way, $\Omega \setminus \displaystyle \bigcup_{i=1}^{m} Q_{i}$ remains a Lipschitz domain and the rest of the argument follows directly.
\end{remark}

As mentioned in the introduction, the example constructed in Theorem \ref{construct} proves that the conclusion of Theorem \ref{MTT'} is optimal.

\begin{ack}
The author would like to thank T. Toro for her supervision, direction, and numerous insights into the subject of this project.

\end{ack}

\bibliography{newpaper}{}

\providecommand{\bysame}{\leavevmode\hbox to3em{\hrulefill}\thinspace}
\providecommand{\MR}{\relax\ifhmode\unskip\space\fi MR }
\providecommand{\MRhref}[2]{%
  \href{http://www.ams.org/mathscinet-getitem?mr=#1}{#2}
}
\providecommand{\href}[2]{#2}
\begin{thebibliography}{DCJS13}

\bibitem[BB11]{BB}
Anders Bj{\"o}rn and Jana Bj{\"o}rn, \emph{Nonlinear potential theory on metric
  spaces}, vol.~17, European Mathematical Society (EMS), Z\"urich, 2011.

\bibitem[BS07]{BS}
Jana Bj{\"o}rn and Nageswari Shanmugalingam, \emph{Poincar\'e inequalities,
  uniform domains and extension properties for {N}ewton-{S}obolev functions in
  metric spaces}, J. Math. Anal. Appl. \textbf{332} (2007), no.~1, 190--208.

\bibitem[CC97]{CC}
Jeff Cheeger and Tobias~H. Colding, \emph{On the structure of spaces with
  {R}icci curvature bounded below. {I}}, J. Differential Geom. \textbf{46}
  (1997), no.~3, 406--480. \MR{1484888 (98k:53044)}

\bibitem[Che99]{Ch}
J.~Cheeger, \emph{Differentiability of lipschitz functions on metric measure
  spaces}, Geom. Funct. Anal. \textbf{9} (1999), no.~3, 428--517.

\bibitem[CN13]{CN}
Tobias~Holck Colding and Aaron Naber, \emph{Lower {R}icci curvature, branching
  and the bilipschitz structure of uniform {R}eifenberg spaces}, Adv. Math.
  \textbf{249} (2013), 348--358. \MR{3116575}

\bibitem[DCJS13]{DJS}
Estibalitz Durand-Cartagena, Jes{\'u}s~A. Jaramillo, and Nageswari
  Shanmugalingam, \emph{First order {P}oincar\'e inequalities in metric measure
  spaces}, Ann. Acad. Sci. Fenn. Math. \textbf{38} (2013), no.~1, 287--308.
  \MR{3076810}

\bibitem[DS91]{DS2}
G.~David and S.~Semmes, \emph{Singular integrals and rectifiable sets in {${\bf
  R}\sp n$}: {B}eyond {L}ipschitz graphs}, Ast\'erisque (1991), no.~193, 152.
  \MR{1113517 (92j:42016)}

\bibitem[DT12]{DT1}
Guy David and Tatiana Toro, \emph{Reifenberg parameterizations for sets with
  holes}, Mem. Amer. Math. Soc. \textbf{215} (2012), no.~1012, vi+102.
  \MR{2907827}

\bibitem[HK00]{HK}
Piotr Haj{\l}asz and Pekka Koskela, \emph{Sobolev met {P}oincar\'e}, Mem. Amer.
  Math. Soc. \textbf{145} (2000), no.~688, x+101. \MR{1683160 (2000j:46063)}

\bibitem[Kei03]{K1}
Stephen Keith, \emph{Modulus and the poincar{\'e} inequality on metric measure
  spaces.}, Math. Z. 245 (2003), no.~2, 255--292.

\bibitem[Kei04]{K2}
\bysame, \emph{Measurable differentiable structures and the poincar\'e
  inequality}, Indiana Univ. Math. J. \textbf{53} (2004), no.~4, 1127--1150.

\bibitem[Laa00]{La}
T.~J. Laakso, \emph{Ahlfors {$Q$}-regular spaces with arbitrary {$Q>1$}
  admitting weak {P}oincar\'e inequality}, Geom. Funct. Anal. \textbf{10}
  (2000), no.~1, 111--123.

\bibitem[LeV07]{LeV}
Randall~J. LeVeque, \emph{Finite difference methods for ordinary and partial
  differential equations}, Society for Industrial and Applied Mathematics
  (SIAM), Philadelphia, PA, 2007.

\bibitem[Mer15]{M1}
Jessica Merhej, \emph{On the geometry of rectifiable sets with carleson and
  poincar\'{e}-type conditions}, arXiv:1510.05056 [math.CA] (to appear in
  Indiana Univ. Math. J.), 2015.

\bibitem[Rei60]{Re}
E.~R. Reifenberg, \emph{Solution of the {P}lateau problem for {$m$}-dimensional
  surfaces of varying topological type}, Bull. Amer. Math. Soc. \textbf{66}
  (1960), 312--313. \MR{0117614 (22 \#8391)}

\bibitem[Sem91a]{Se1}
Stephen Semmes, \emph{Chord-arc surfaces with small constant. {I}}, Adv. Math.
  \textbf{85} (1991), no.~2, 198--223. \MR{1093006 (93d:42019a)}

\bibitem[Sem91b]{Se2}
\bysame, \emph{Chord-arc surfaces with small constant. {II}. {G}ood
  parameterizations}, Adv. Math. \textbf{88} (1991), no.~2, 170--199.
  \MR{1120612 (93d:42019b)}

\bibitem[Sim83]{Si}
Leon Simon, \emph{Lectures on geometric measure theory}, Proceedings of the
  Centre for Mathematical Analysis, Australian National University, vol.~3,
  Australian National University, Centre for Mathematical Analysis, Canberra,
  1983. \MR{756417 (87a:49001)}

\bibitem[Tor97]{To1}
Tatiana Toro, \emph{Doubling and flatness: geometry of measures}, Notices Amer.
  Math. Soc. \textbf{44} (1997), no.~9, 1087--1094. \MR{1470167 (99d:28010)}

\end{thebibliography}
\bibliographystyle{amsalpha}
\end{document}